\newcommand{\wt}{\widetilde}
\newcommand{\R}{\mathbb R}
\newcommand{\D}{\mathrm D}
\newcommand{\w}{\mathrm w}
\newcommand{\A}{\mathrm A}
\newcommand{\K}{\mathcal K}
\newcommand{\conv}{\mathrm{conv}}
\newcommand{\bd}{\mathrm{bd}}
\newcommand{\lin}{\mathrm{span}}
\theoremstyle{plain}
\newtheorem{theorem}{Theorem}[section]
\newtheorem{lemma}[theorem]{Lemma}
\newtheorem{corollary}[theorem]{Corollary}
\newtheorem{proposition}[theorem]{Proposition}
\newtheorem{conjecture}[theorem]{Conjecture}
\theoremstyle{definition}
\theoremstyle{remark}
\newtheorem{remark}[theorem]{Remark}
\begin{document}


\title[On the isodiametric and isominwidth inequalities for planar bisections]{On the isodiametric and isominwidth inequalities for planar bisections}

    \author[A. Ca\~nete]{Antonio Ca\~nete}
     \address{Departamento de Matem\'atica Aplicada I, Universidad de Sevilla}
     \email{antonioc@us.es}
	
	\author[B. Gonz\'alez Merino]{Bernardo Gonz\'{a}lez Merino}
	\address{Departamento de An\'alisis Matem\'atico, 
Universidad de Sevilla}
	\email{bgonzalez4@us.es}

\thanks{The first author was partially supported by the MICINN project MTM2017-84851-C2-1-P,
and by Junta de Andaluc\'ia grant FQM-325. The research of the second author is a result of the activity developed within the framework
of the Programme in Support of Excellence Groups of the Regi\'on de Murcia, Spain, by Fundaci\'on S\'eneca,
Science and Technology Agency of the Regi\'on de Murcia. The second author was partially supported
by Fundaci\'{o}n S\'{e}neca project 19901/GERM/15 and by MICINN project PGC2018-094215-B-I00, Spain.
}

\subjclass[2010]{Primary 52A40}

\keywords{Planar convex bodies, maximum bisecting diameter, maximum bisecting width, minimizing bisections}

\maketitle

\begin{abstract}
For a given planar convex compact set $K$, consider a bisection $(A,B)$ of $K$
(i.e., $A\cup B=K$ and whose common boundary $A\cap B$ is an injective continuous curve connecting two boundary points of $K$)
minimizing the corresponding maximum diameter (or maximum width) of the regions among all such bisections of $K$.

In this note we study some properties of these minimizing bisections and
we provide analogous to the isodiametric (Bieberbach, 1915), the isominwidth (P\'al, 1921),
the reverse isodiametric (Behrend, 1937),
and the reverse isominwidth (Gonz\'alez Merino \& Schymura, 2018) inequalities.
\end{abstract}

\section{Introduction}

The siblings Alice and Bob are deeply sad due to the loss of their uncle Charlie, who recently passed away.
Soon, they will be awarded with his heritage consisting of a countryside piece of ground.
They have to divide this terrain into two connected pieces of ground, which
must be \emph{equal} according to some \emph{even rule} or \emph{fairness}. In this paper, we
will try to solve their issues, when the rule is either that the diameter or the minimum width
of each of the pieces of ground is as small as possible
(and so, the largest distance in the two pieces is minimized,
or the eventual use of an agrarian harvester is optimized).

Let $\mathcal K^2$ be the family of planar convex bodies
(recall that, as usual, a convex body is a convex compact set)
with non-empty interior.
Throughout this paper, for a given compact set $A\subset\R^2$,
we will denote its \emph{area} (or 2-dimensional Lebesgue measure) by $\A(A)$,
its \emph{diameter} (largest Euclidean distance between two points in $A$) by $\D(A)$,
and its \emph{(minimum) width} (shortest distance between two parallel lines
containing $A$ between them) by $\w(A)$.

For a given $K\in\mathcal K^2$, a \emph{bisection} of $K$ will be
any pair of closed sets $(K_1,K_2)$
satisfying that
\begin{itemize}
\item[(i)] $K=K_1\cup K_2$,
\item[(ii)] $K_1\cap K_2=l([-1,1])$, where $l:[-1,1]\rightarrow K$ is an injective and continuous curve and whose endpoints
$l(-1)$, $l(1)$ are the only points of the curve in the \emph{boundary} $\mathrm{bd}(K)$ of $K$.
\end{itemize}


\begin{figure}[ht]
  \includegraphics[width=0.68\textwidth]{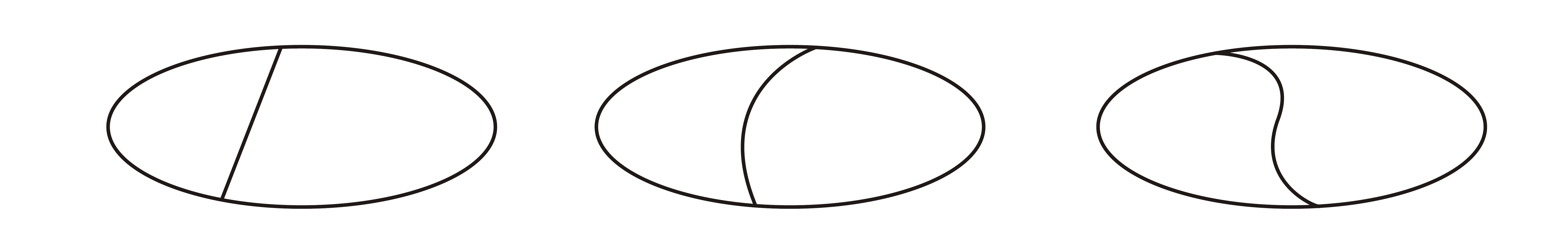}\\
  \caption{Some bisections for the ellipse}\label{fig:bisections}
\end{figure}

For $K\in\mathcal K^2$, let $\mathcal{B}(K)$ be the set of all the bisections of $K$.
We will denote the \emph{infimum of the maximum bisecting diameter} of $K$ by
\begin{equation}\label{eq:db}
\D_B(K):=\inf_{(K_1, K_2)\in\mathcal{B}(K)}\max\{\D(K_1),\D(K_2)\}.
\end{equation}
In some sense, $\D_B(K)$ can be understood, for each $K\in\mathcal K^2$,
as the optimal value for the diameter functional when considering bisections of $K$.
We will see in Lemma~\ref{lem:minimum} that such an infimum is in fact a \emph{minimum}.
We will study in this work the bisections of $K$ which provide $\D_B(K)$, which will be called \emph{minimizing bisections} of $K$,
obtaining also an isodiametric-type inequality relating $\D_B(K)$ and $\A(K)$. 

Our motivation mainly emanates from a paper by Miori et al \cite{MPS}.
That paper focuses on bisections into two regions of \emph{equal area} minimizing
the maximum bisecting diameter in the setting of \emph{centrally symmetric} planar convex bodies.
Among other results, they prove that for every set in this family, there always exists
a minimizing bisection determined by a line segment \cite[Prop.~4]{MPS}, and describe in \cite[Th.~5]{MPS}
the optimal set for this problem (that is, the set of fixed area with the minimum possible value for the maximum bisecting diameter).
Moreover, for general planar convex bodies they also demonstrate that the minimum value for that functional
when considering bisections by line segments is attained by a centrally symmetric set \cite[Th.~6]{MPS}.
Then, Proposition~\ref{prop:MioriPeriSegura} below follows from these results
(although it is not explicitly stated in \cite{MPS}):
for a given $K\in \mathcal{K}^2$, consider
\[
\widetilde{\D}_{B}(K)=\inf_{(K_1,K_2)\in \mathcal{\widetilde{B}}(K)}\max\{\D(K_1),\D(K_2)\},
\]
where
\[
\mathcal{\widetilde{B}}(K)=\{(K_1,K_2)\in\mathcal{B}(K): K_1\cap K_2\text{ is a line segment, }\A(K_1)=\A(K_2)\}.
\]
In~\cite[Ex.~2.3]{MPS} the authors consider the set
\[
Q=\left\{(x_1,x_2)\in\R^2:-\frac1{\sqrt{5}}\leq x_1\leq\frac1{\sqrt{5}}\,\,\text{ and }\,\,\left(x_1\pm\frac1{\sqrt{5}}\right)^2+x_2^2\leq 1\right\},
\]
proving that the only bisection of $Q$ in $\widetilde{B}(Q)$ providing the value $\wt{\D}_B(Q)$ is the bisection $(Q^+,Q^-)$,
where
$Q^+=Q\cap\{(x_1,x_2)\in\R^2:x_2\geq 0\}$ and $Q^-=Q\cap\{(x_1,x_2)\in\R^2:x_2\leq 0\}$.

\begin{figure}[ht]
  \includegraphics[width=0.65\textwidth]{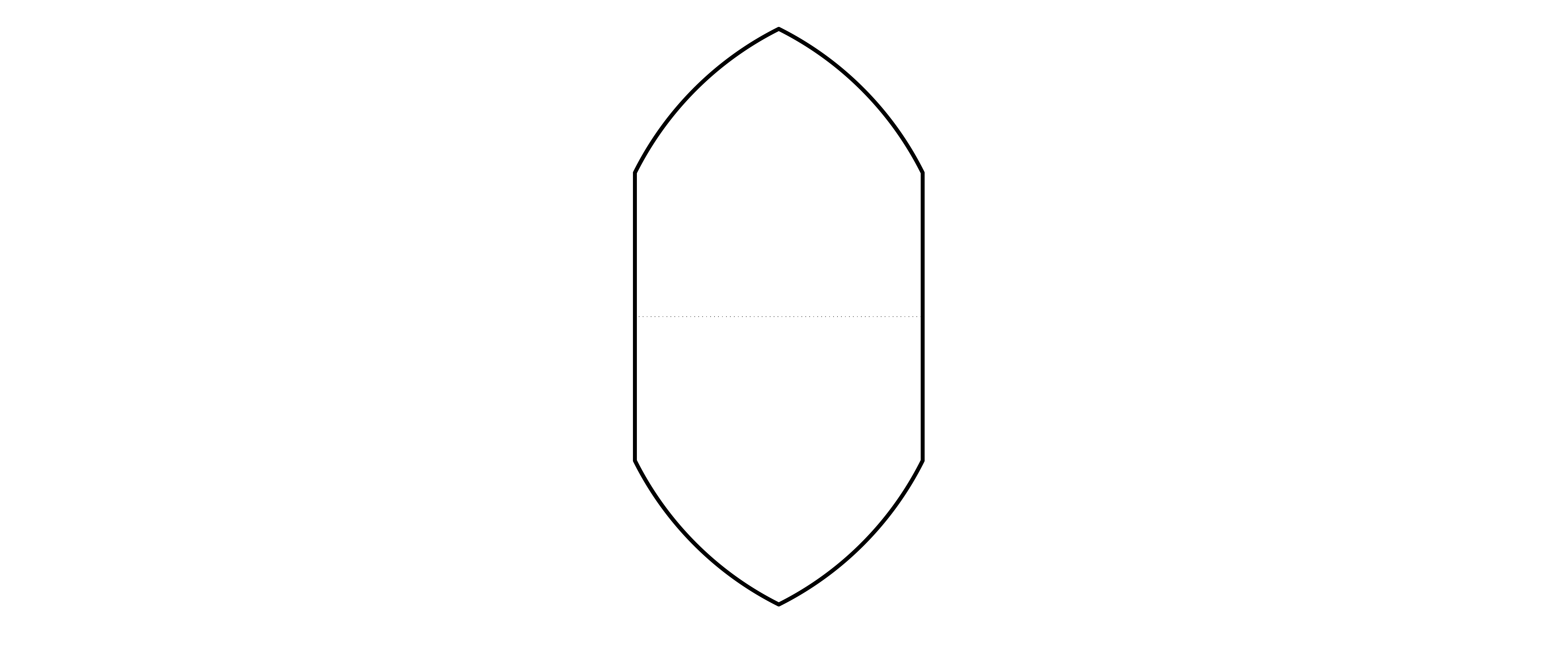}\\
  \caption{The set $Q$ and the corresponding minimizing bisection}\label{fig:optimomps}
\end{figure}

\begin{proposition}\label{prop:MioriPeriSegura}
Let $K\in\mathcal K^2$. Then,
\begin{equation} \label{eq:desigualdadmps}
\frac{\mathrm{A}(K)}{\widetilde{\D}_{B}(K)^2}\leq 2\arctan\bigg(\frac{3}{4}\bigg),
\end{equation}
with equality if $K=Q$.
\end{proposition}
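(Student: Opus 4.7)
The plan is to combine the three results of \cite{MPS} recalled above the statement and to verify the constant $2\arctan(3/4)$ by direct computation at $K=Q$.

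First, since the ratio $\A(K)/\widetilde{\D}_B(K)^2$ is invariant under homotheties, I will appeal to \cite[Th.~6]{MPS} to reduce the problem to the centrally symmetric case: among all $K\in\mathcal{K}^2$, the supremum of this ratio is attained on the subclass of centrally symmetric planar convex bodies. This step is the one that upgrades the symmetric result to arbitrary $K$ and is the reason the proposition needs~\cite{MPS} at all.

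Next, within the centrally symmetric class, \cite[Prop.~4]{MPS} guarantees that the infimum defining $\widetilde{\D}_B(K)$ is attained by a line segment through the center of symmetry (so $\widetilde{\mathcal{B}}(K)$ is the correct class to work in), and \cite[Th.~5]{MPS} identifies the extremal body — once $\widetilde{\D}_B$ is normalized — as the set $Q$. Combining these with the reduction of the previous paragraph yields
\begin{equation*}
\frac{\A(K)}{\widetilde{\D}_B(K)^2}\leq \frac{\A(Q)}{\widetilde{\D}_B(Q)^2}\qquad\text{for all }K\in\mathcal{K}^2,
\end{equation*}
with equality for $K=Q$.

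Finally, I will evaluate the right-hand side. By \cite[Ex.~2.3]{MPS}, the horizontal bisection $(Q^+,Q^-)$ is the unique element of $\widetilde{\mathcal{B}}(Q)$ that realizes $\widetilde{\D}_B(Q)$, and a direct check shows $\D(Q^{\pm})=1$ (the radius of the two generating disks), hence $\widetilde{\D}_B(Q)=1$. For the area, the fourfold symmetry of $Q$ and the substitution $u=x_1+1/\sqrt{5}$ give
\begin{equation*}
\A(Q)=4\int_{1/\sqrt{5}}^{2/\sqrt{5}}\sqrt{1-u^2}\,du=2\bigl(\arcsin(2/\sqrt{5})-\arcsin(1/\sqrt{5})\bigr).
\end{equation*}
Since $\arcsin(2/\sqrt{5})=\arctan 2$ and $\arcsin(1/\sqrt{5})=\arctan(1/2)$, the tangent subtraction formula yields $\arctan 2-\arctan(1/2)=\arctan(3/4)$, so $\A(Q)=2\arctan(3/4)$, as claimed.

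The proposition is essentially a bookkeeping consequence of the three theorems in \cite{MPS}, so the only delicate point is matching definitions: one must check that the class $\widetilde{\mathcal{B}}$ and the functional $\widetilde{\D}_B$ used here coincide with the objects analyzed in \cite{MPS}, so that their theorems apply verbatim. Beyond that, the only real computation is the integral evaluation above.
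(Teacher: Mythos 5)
Your proposal is correct and follows essentially the same route as the paper, which likewise derives the proposition from \cite[Prop.~4, Th.~5, Th.~6]{MPS} (reduction to centrally symmetric bodies and identification of $Q$ as extremal) without an independent argument; your explicit evaluation $\widetilde{\D}_B(Q)=1$ and $\A(Q)=2\arctan(3/4)$ matches the computation of $\A(1/\sqrt{5})$ appearing in the paper's proof of Theorem~\ref{th:isodiametricIneq}.
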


Observe that inequality \eqref{eq:desigualdadmps} is an isodiametric-type inequality, in the sense of the classical
\emph{isodiametric inequality} of Bieberbach \cite{Bi}: given $K\in\K^2$, we have that
\begin{equation}\label{eq:bieberbach}
\A(K)\leq\frac{\pi}{4}\,\D(K)^2,
\end{equation}
with equality if and only if $K$ is an Euclidean disk.

Our Theorem~\ref{th:isodiametricIneq} below is an extension of Proposition \ref{prop:MioriPeriSegura}. 
On the one hand, we consider \emph{arbitrary bisections}, determined by curves which are \emph{not necessarily line segments}.
And on the other hand, we allow the regions of the bisections to have \emph{different areas}.
In other words, we focus on $\mathcal{B}(K)$ instead of $\mathcal{\widetilde{B}(K)}$.
This makes our approach completely general in this setting.
In Section~\ref{sec:IsodiamIneq} we shall prove the following result.

\begin{theorem}\label{th:isodiametricIneq}
Let $K\in\mathcal K^2$. Then,
\begin{equation}\label{eq:miniBisect}
\frac{\mathrm{A}(K)}{\D_B(K)^2}\leq 2\,\arctan\bigg(\frac{3}{4}\bigg),
\end{equation}
with equality if and only if $K=Q$.
\end{theorem}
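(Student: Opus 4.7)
The plan is to trap $K$ inside the intersection of two unit disks determined by the endpoints of the bisecting curve, and then bound the area via the supports at those endpoints.

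Normalize $D_B(K)=1$. By Lemma~\ref{lem:minimum}, let $(K_1,K_2)$ be a minimizing bisection with cut-curve endpoints $p,q\in\partial K$, and set $t:=|p-q|$. The key observation is that each $K_i$ contains both $p$ and $q$ and has diameter at most~$1$, so $K_i\subseteq B(p,1)\cap B(q,1)=:L$, the lens of the two unit disks. Hence $K=K_1\cup K_2\subseteq L$, and in particular $t\leq 1$. Moreover, since $K$ is convex and $p,q\in\partial K$, there exist supporting half-planes $H_p,H_q$ of $K$ at $p$ and $q$, giving the containment $K\subseteq L\cap H_p\cap H_q$.

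Next, I would argue that the supporting lines perpendicular to $pq$ maximize $\A(L\cap H_p\cap H_q)$; this can be approached by a Steiner symmetrization with respect to the axis of symmetry of $L$ (the line $pq$), combined with an analysis of how tilting the supports away from perpendicular cuts off larger portions of the lens. In this extremal perpendicular configuration $K$ is contained in the strip $S_t$ of width $t$ orthogonal to $pq$, and a direct integration yields
\[
\A(L\cap S_t)=f(t):=2t\sqrt{1-t^2}-t\sqrt{1-t^2/4}+2\arcsin t-2\arcsin(t/2).
\]
Then $f'(t)=4\sqrt{1-t^2}-2\sqrt{1-t^2/4}$ has unique zero $t_0=2/\sqrt{5}$ on $(0,1]$, and using the tangent subtraction identity $\arcsin(2/\sqrt{5})-\arcsin(1/\sqrt{5})=\arctan(3/4)$ one finds $f(t_0)=2\arctan(3/4)$; comparing with $f(0)=0$ and $f(1)=2\pi/3-\sqrt{3}/2$ confirms this is the maximum of $f$ on $(0,1]$, yielding \eqref{eq:miniBisect}.

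For equality, tracing back through the chain of inequalities forces $t=t_0$ and $K=L\cap S_{t_0}$, which is exactly $Q$ up to rigid motion. The main technical obstacle is the reduction to perpendicular supports: standard Steiner symmetrization does not automatically preserve the boundary condition $p,q\in\partial K$, so some additional care (or an alternative argument exploiting the symmetry of $L$ directly) is needed to justify that the perpendicular configuration indeed maximizes $\A(L\cap H_p\cap H_q)$. Once this reduction is in place, the one-variable maximization is routine.
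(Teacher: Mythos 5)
Your endgame is correct and matches the paper: with $t=2a$ and $\D_B(K)=1$, your $f(t)$ is exactly the area of the lens truncated by the strip, its unique critical point is $t_0=2/\sqrt{5}$, and $f(t_0)=2\arctan(3/4)>f(1)=\frac{2\pi}{3}-\frac{\sqrt3}{2}$. The gap is the step you yourself flag, but the obstruction is more serious than a bookkeeping issue with Steiner symmetrization: the claim that the perpendicular supports maximize $\A(L\cap H_p\cap H_q)$, i.e.\ that $\A(L\cap H_p\cap H_q)\le \A(L\cap S_t)$ for every admissible choice of supporting half-planes, is simply false. Take $t$ small, $p=(-t/2,0)$, $q=(t/2,0)$, and supporting lines of small slopes $\mp\varepsilon$ through $p$ and $q$ (each half-plane contains the segment $[p,q]$). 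Then $L\cap H_p\cap H_q\supseteq L\cap\{x_2\ge 0\}$, whose area tends to $\pi/2$ as $t\to 0^+$, while $f(t)\to 0$; note $\pi/2>2\arctan(3/4)$, so no symmetrization or tilting analysis can rescue the inequality in this generality. Your chain only uses $\D(K_i)\le \D_B(K)$ and $p,q\in K_1\cap K_2$, and that information is not enough to exclude such shallow supports.

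What is missing is precisely the extra structure the paper feeds in before the strip reduction. First, Lemma~\ref{lem:EqualValueBisection} lets one take $\D(K_1)=\D(K_2)=\D_B(K)$, and Lemma~\ref{lem:Symmetry} reduces to a maximizer symmetric about the perpendicular bisector of the cut, so the two supporting lines are mirror images of common slope $m$. Then $K_2$ lies in the triangle $T$ bounded by the two supports and the line of the cut, and $\D(T)\ge \D(K_2)=\D_B(K)$ produces the key dichotomy: either the supports are steep, $m\ge a^{-1}\sqrt{\D_B(K)^2-a^2}$, in which case a rotation-by-$\pi$ comparison (not plain symmetrization) shows $\A(R(a,m))<\A(R(a,+\infty))$, i.e.\ the tilted region is beaten by the strip-truncated lens and your $f(t)$ bound applies; or the shallow case occurs, but then necessarily $\D_B(K)=2a=t$, and the \emph{whole} lens already has area $\frac{4\pi-3\sqrt3}{6}\,\D_B(K)^2<2\arctan(3/4)\,\D_B(K)^2$, so no support argument is needed. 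Without the equal-diameter normalization and this case split, your reduction to the perpendicular configuration does not go through; with them, your computation of $f$ and the equality discussion (plus checking that $\D_B(Q)$ is indeed attained by the horizontal cut, as the paper does via \cite{MPS}) completes the proof along essentially the paper's lines.
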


\begin{remark}
\label{re:r}
Previous Theorem~\ref{th:isodiametricIneq} implies that, if we prescribe the enclosed area,
the convex body with the minimum possible value for $\D_B$ is precisely $Q$,
up to dilations and rigid motions (see Remark~\ref{re:inv}).
Furthermore, in Remark~\ref{re:futuro} we will characterize the minimizing bisections of $Q$:
a given bisection $(Q_1,Q_2)\in\mathcal B(Q)$ is minimizing if and only if
$Q_1\cap Q_2=l([-1,1])$ where
\[
\{l(-1),l(1)\}=\left\{\left(\pm\frac{1}{\sqrt{5}},0\right)\right\} \quad\text{and}\quad
l([-1,1])\subset\left\{(x_1,x_2)\in\mathbb R^2:x_1^2+\left(x_2\pm\frac{2}{\sqrt{5}}\right)^2\leq 1\right\}.
\]
\end{remark}

Surprisingly enough, the optimal set in the general situation,
described in Theorem \ref{th:isodiametricIneq},
is still the same set as in Proposition \ref{prop:MioriPeriSegura}.
This fact strengthens the idea that central symmetry
is an inherent property for this optimization problem.
On the other hand, we want to point out that the proof of our Theorem~\ref{th:isodiametricIneq}
cannot be carried out with the same arguments from \cite[Th.~6]{MPS},
where the authors focus on bisections given by \emph{line segments} and providing \emph{equal-area subsets}.
While the former restriction is not so significant (see our Lemma~\ref{lem:StraightLine}),
the later one entails a substantial reduction in the proof of \cite[Th.~6]{MPS},
since it directly implies that the optimal set can be supposed to be centrally symmetric.
In contrast, in the general case, the proof of our Theorem \ref{th:isodiametricIneq}
moves around the choice of two non-parallel supporting lines at the endpoints of the line segment
providing the minimizing bisection, and one cannot reduce to the simpler centrally symmetric case until the very last step.

Finally, it is worth mentioning that the questions regarding the maximum bisecting diameter
(firstly treated in \cite{MPS}) have given rise to several works in the last years.
In \cite{mejora} we can find some improvements for the centrally symmetric case,
and some related problems for divisions into three or more regions have been studied in \cite{extending, multi}.
Moreover, we also point out that these questions have been partially treated in surfaces of $\R^3$ \cite{surfaces1, surfaces2}.
Essentially, whenever there exists an isodiametric inequality, one can establish the corresponding
isodiametric inequality for bisections. Although we focus in this work in the planar case,
in Section \ref{sec:OtherSpaces} there are some considerations
on the isodiametric-type problem for bisections in $\mathbb R^n$,
and also in the spherical and the hyperbolic spaces.

Apart from studying the diameter, we also consider in this work the analogous problem for the \emph{width functional}
(which is, in some sense, the geometric functional \emph{reverse} to the diameter).
Recall that by replacing the diameter with the width in the classical isodiametric inequality, P\'al showed that
\begin{equation}\label{eq:palIneq}
\A(K)\geq\frac{1}{\sqrt{3}}\,\w(K)^2,
\end{equation}
with equality if and only if $K$ is an equilateral triangle \cite{Pal}.
Our aim is obtaining a similar \emph{isominwidth inequality} for bisections of a planar convex body.
For this purpose, given $K\in\K^2$, we can define, analogously to $\D_B(K)$, the \emph{infimum of the maximum bisecting width}
by
\begin{equation}
\label{eq:wB}
\w_B(K):=\inf_{(K_1, K_2)\in\mathcal{B}(K)}\max\{\w(K_1),\w(K_2)\}.
\end{equation}
We will prove in Section~\ref{sec:IsominwidthIneq} the following inequality.

\begin{theorem}\label{th:isominwidthIneq}
Let $K\in\mathcal K^2$. Then,
\begin{equation}\label{eq:widthBisection}
\frac{\mathrm{A}(K)}{\w_B(K)^2}\geq \frac{4}{\sqrt{3}},
\end{equation}
with equality if and only if $K$ is an equilateral triangle $\mathcal{T}$.
\end{theorem}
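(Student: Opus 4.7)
The plan is to derive (\ref{eq:widthBisection}) from P\'al's inequality (\ref{eq:palIneq}) together with the elementary comparison
$$\w_B(K)\,\leq\,\frac{\w(K)}{2},\qquad K\in\mathcal K^2.$$
Granted this, one has
$$\A(K)\,\geq\,\frac{1}{\sqrt{3}}\,\w(K)^2\,\geq\,\frac{1}{\sqrt{3}}\,(2\w_B(K))^2\,=\,\frac{4}{\sqrt{3}}\,\w_B(K)^2,$$
which is exactly the inequality stated in Theorem~\ref{th:isominwidthIneq}.

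To establish the comparison, I fix two parallel supporting lines $\ell_1,\ell_2$ of $K$ realising $\w(K)$ (so lying at distance $\w(K)$), and let $m$ be the line parallel to both and equidistant from each. Since $K$ is convex and $m$ strictly separates $\ell_1$ from $\ell_2$, the intersection $m\cap K$ is a non-degenerate segment with endpoints on $\bd(K)$, and cutting $K$ along $m\cap K$ produces a bisection $(K_1,K_2)\in\mathcal B(K)$. Each piece $K_i$ lies in the closed strip bounded by $\ell_i$ and $m$, which has width $\w(K)/2$; therefore $\w(K_i)\leq\w(K)/2$ for $i=1,2$, and taking the infimum in (\ref{eq:wB}) yields the claim.

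For the equality characterisation, equality in the two-step chain forces equality in P\'al's inequality, so $K$ must be an equilateral triangle $\mathcal T$. Conversely, bisecting $\mathcal T$ (of side $s$) along any of its three altitudes yields two congruent $30$-$60$-$90$ right triangles, each having minimum width equal to the altitude to its hypotenuse, namely $s\sqrt{3}/4=\w(\mathcal T)/2$; hence $\w_B(\mathcal T)\leq\w(\mathcal T)/2$. The main obstacle is the converse inequality $\w_B(\mathcal T)\geq\w(\mathcal T)/2$: a priori some more elaborate bisection of $\mathcal T$ could produce two pieces each of width strictly smaller than $\w(\mathcal T)/2$, and this is not ruled out by the general inequality (the area alone does not upper-bound the width of a convex set). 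To handle it, I would argue by a reduction analogous to Lemma~\ref{lem:StraightLine}, restricting first to bisections by line segments, and then use the $D_3$-symmetry of $\mathcal T$ to group admissible chords into a small number of one-parameter families. For each such family, a direct computation of the minimum widths of the two sub-polygons should show that $\max\{\w(K_1),\w(K_2)\}$ is minimised precisely at the altitudes, with common value $\w(\mathcal T)/2$.
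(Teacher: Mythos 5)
Your derivation of the inequality itself is correct and in fact slightly leaner than the paper's argument: the chain $\A(K)\geq \w(K)^2/\sqrt{3}\geq (2\w_B(K))^2/\sqrt{3}$ only needs the easy estimate $\w_B(K)\leq\w(K)/2$, which your mid-strip chord construction establishes, together with P\'al's inequality \eqref{eq:palIneq}; the ``only if'' half of the equality case (equality forces equality in \eqref{eq:palIneq}, hence $K$ is an equilateral triangle) is also fine.

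The genuine gap is exactly where you flag it: the ``if'' direction needs $\w_B(\mathcal T)\geq\w(\mathcal T)/2$, i.e.\ that no bisection of the equilateral triangle makes both pieces of width strictly smaller than half the height, and you only offer a plan (reduce to chords as in Lemma~\ref{lem:StraightLine}, then a symmetry-assisted case analysis that ``should show'' the claim) without carrying out any of the computations. As written, the equality characterisation is therefore unproved. The paper closes this step with Bang's solution of Tarski's plank problem, inequality \eqref{eq:Bang}: for any bisection of $K$ by a line segment one has $\w(K_1)+\w(K_2)\geq\w(K)$, which combined with Lemmas~\ref{lem:StraightLine}, \ref{lem:minimum} and \ref{lem:EqualValueBisection} yields the identity $\w_B(K)=\w(K)/2$ for \emph{every} $K\in\K^2$ (Lemma~\ref{lem:WidthBisect=Half}); the equality case for $\mathcal T$ is then immediate, and your middle inequality is in fact always an equality. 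If you prefer to avoid quoting Bang, you must actually complete your reduction: after passing to chords, parametrise them by the pair of edges of $\mathcal T$ containing their endpoints and verify in each configuration that one of the two pieces has width at least $\w(\mathcal T)/2$ --- this is precisely the two-plank instance of Bang's inequality for the triangle, elementary but not automatic, and it has to be written out rather than asserted.
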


The techniques employed to prove Theorem \ref{th:isominwidthIneq} are based on a nice combination of
P\'al's inequality \eqref{eq:palIneq} and Bang's inequality on Tarski's plank problem \cite{Ba}.
We note that we will establish an isominwidth-type inequality for bisections in $\mathbb R^n$ in Section~\ref{sec:OtherSpaces}.

\begin{remark}
In analogy with Remark~\ref{re:r}, Theorem~\ref{th:isominwidthIneq} implies that if we prescribe the enclosed area,
the corresponding equilateral triangle $\mathcal{T}$ is the convex body with the largest possible value for $\w_B$.
That value is attained by the bisection determined by a line segment passing through the midpoints of two edges of $\mathcal{T}$
(see the proof of Theorem~\ref{th:isominwidthIneq}).
\end{remark}

\begin{remark}
\label{re:inv}
Notice that the quotients $\A(K)/\wt{\D}_B(K)^2$, $\A(K)/\D_B(K)^2$ and $\A(K)/\w_B(K)^2$
are invariant under dilations and rigid motions, due to the corresponding homogeneity of
the area, the diameter and the width functionals and the invariance under rigid motions.
Therefore, the uniqueness regarding the different optimal sets
has to be understood \emph{up to dilations and rigid motions}.
\end{remark}

On the other hand, the study of the \emph{reverse counterparts} to some geometric inequalities
has increasingly gained interest in the last years (see~\cite{Beh,ball,cha} and references therein).
In the case of the classical isodiametric inequality~\eqref{eq:bieberbach},
a reverse inequality cannot be stated directly since the \emph{isodiametric quotient} $\A(K)/\D(K)^2$,
for $K\in\K^2$, cannot be bounded from below by any constant different from $0$
(it suffices to consider very thin rectangles with area approaching zero).
However, Behrend treated this problem finding such lower bound for the family
of sets in $\K^2$ that maximizes that quotient in their affine class.
More precisely, we will say that $K\in\K^2$ is in
\emph{Behrend position} if
\[
\frac{\A(K)}{\D(K)^2}=\sup_{\phi\in End(\mathbb{R}^{2})}\frac{\A(\phi(K))}{\D(\phi(K))^2},
\]
where $End(\mathbb{R}^2)$ denotes the set of affine endomorphisms of $\mathbb{R}^2$  \cite{Beh}.
Therefore, if $K$ is in Behrend position, the above quotient achieves the maximum value among all the affine transformations of $K$.
This approach allows to obtain an interesting reverse isodiametric inequality: for every $K\in\K^2$ in Behrend position, we have that
\begin{equation}\label{eq:BehrendIneq}
\A(K)\geq\frac{\sqrt{3}}{4}\,\D(K)^2,
\end{equation}
with equality if and only if $K$ is an equilateral triangle \cite{Beh}. Moreover, if we restrict
$K$ to be centrally symmetric (that is, $K=x-K$ for some $x\in\mathbb R^2$), then
\begin{equation}\label{eq:Behrend0symm}
\A(K)\geq\frac12\,\D(K)^2,
\end{equation}
with equality if and only if $K$ is a square (\cite{Beh}, see also~\cite{GMS}).
Following these ideas (also used by Ball for obtaining the first reverse isoperimetric inequality \cite{ball}),
we will establish an analogous inequality to \eqref{eq:BehrendIneq} for the infimum of the maximum bisecting diameter.
In order to do this, we will say that $K\in\K^2$ is in \emph{Behrend-bisecting position} if
\begin{equation}
\label{def:BBP}
\frac{\A(K)}{\D_B(K)^2}=\sup_{\phi\in End(\mathbb{R}^2)}\frac{\A(\phi(K))}{\D_B(\phi(K))^2}.
\end{equation}
In Section~\ref{sec:RevIsodiamIneq} we give some necessary conditions
for a set $K$ to be in Behrend-bisecting position.
In particular, and contrary to intuition,
we will see that an equilateral triangle \emph{is not in Behrend-bisecting position}.
In fact, Proposition~\ref{prop:TriangleOptimal} gives a characterization of the unique triangle in Behrend-bisecting position,
being an isosceles triangle whose different angle equals $\arccos(\sqrt{2/3})$.
Apart from this, our Theorem~\ref{th:RevIsodiamBisectIneq} establishes the following reverse isodiametric inequality,
which is not sharp in general.

\begin{theorem}\label{th:RevIsodiamBisectIneq}
Let $K\in\K^2$ be in Behrend-bisecting position. Then,
\begin{equation}\label{eq:RevIsodBisecting}
\frac{\A(K)}{\D_B(K)^2}\geq\frac{\sqrt{3}}{4}.
\end{equation}
\end{theorem}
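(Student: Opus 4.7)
The plan is to reduce Theorem~\ref{th:RevIsodiamBisectIneq} directly to the classical Behrend inequality~\eqref{eq:BehrendIneq}, exploiting the trivial but decisive monotonicity $\D_B(K)\le\D(K)$. Indeed, for every $K\in\K^2$ and every bisection $(K_1,K_2)\in\mathcal B(K)$, each piece is contained in $K$, whence $\D(K_i)\le\D(K)$; taking the maximum over $i$ and then the infimum over bisections yields
\begin{equation*}
\D_B(K)\le\D(K),\qquad\text{so that}\qquad \frac{\A(K)}{\D_B(K)^2}\ge \frac{\A(K)}{\D(K)^2}
\end{equation*}
for every planar convex body $K$.

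Next I would invoke the existence of a Behrend-position representative of $K$: there exists an affine endomorphism $\phi\in End(\R^2)$ such that $\phi(K)$ is in Behrend position. Applying Behrend's inequality to $\phi(K)$ and combining with the previous monotonicity produces
\begin{equation*}
\frac{\A(\phi(K))}{\D_B(\phi(K))^2}\ge \frac{\A(\phi(K))}{\D(\phi(K))^2}\ge \frac{\sqrt{3}}{4}.
\end{equation*}

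Finally, since $K$ is in Behrend-bisecting position, the defining identity~\eqref{def:BBP} guarantees that $\A(K)/\D_B(K)^2\ge\A(\phi(K))/\D_B(\phi(K))^2$, and chaining this with the previous display yields $\A(K)/\D_B(K)^2\ge\sqrt{3}/4$, as desired. The only delicate point is the existence of a $\phi$ placing $K$ into Behrend position, which is a standard compactness argument after quotienting $End(\R^2)$ by dilations and rotations and is implicit in Behrend's own work. The simplicity of the argument is consistent with the authors' remark that~\eqref{eq:RevIsodBisecting} is not sharp: the constant $\sqrt{3}/4$ is inherited from the equilateral-triangle extremum of~\eqref{eq:BehrendIneq}, yet an equilateral triangle is \emph{not} in Behrend-bisecting position (the extremal triangle is instead the isosceles one with apex angle $\arccos\sqrt{2/3}$ discussed before the theorem), so the genuine sharp constant for the bisecting problem must be strictly larger.
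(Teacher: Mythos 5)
Your argument is correct, and it takes a genuinely different (and considerably shorter) route than the paper's. You reduce everything to the classical Behrend inequality \eqref{eq:BehrendIneq} via the trivial monotonicity $\D_B(L)\leq\D(L)$ (each piece of a bisection lies in $L$) together with the defining property \eqref{def:BBP}: a body in Behrend-bisecting position dominates the bisecting quotient of \emph{every} affine image, in particular of a Behrend-position representative $\phi(K)$, whose existence is classical (and can be proved exactly as in Lemma~\ref{lem:isodiamMaximum}). The chain $\A(K)/\D_B(K)^2\geq\A(\phi(K))/\D_B(\phi(K))^2\geq\A(\phi(K))/\D(\phi(K))^2\geq\sqrt{3}/4$ is airtight. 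The paper instead works intrinsically with a minimizing bisection: Lemma~\ref{lem:behrendBisectPos} establishes, by a compression argument in the spirit of \cite{GMS}, a necessary condition on the diametrical directions of the two pieces for a body in Behrend-bisecting position; Corollary~\ref{cor:angles} extracts two such directions $u_1,u_2$ with $|u_1^Tu_2|\leq 1/2$; and then two segments of length $\D_B(K)$ inside $K$ in these directions, combined with Groemer's theorem \cite{Gro}, give $\A(K)\geq\frac{1}{2}\D_B(K)^2\sqrt{1-(u_1^Tu_2)^2}\geq\frac{\sqrt{3}}{4}\D_B(K)^2$. Your route buys brevity and makes transparent that the constant is simply inherited from the classical problem; the paper's route buys reusable machinery (Lemma~\ref{lem:behrendBisectPos} also feeds Remark~\ref{re:triangles}, Proposition~\ref{prop:TriangleOptimal} and, through Lemma~\ref{lem:Behrend-bisect-0symm}, the improved centrally symmetric bound of Theorem~\ref{th:RevIsodiamBisect0symmIneq}) and works with segments of length $\D_B(K)$ rather than $\D(K)$, which is the natural starting point for attacking the conjectured sharp constant $4/(3\sqrt{3})$. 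One small caveat: your closing assertion that the sharp constant must exceed $\sqrt{3}/4$ does not follow from your chain alone; it requires tracing the equality case of \eqref{eq:BehrendIneq} (forcing an equilateral triangle) and invoking Proposition~\ref{prop:TriangleOptimal} to exclude it — a peripheral point, since the theorem claims no sharpness.
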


Moreover, the restriction to centrally symmetric convex bodies in Behrend-bisecting position
allows to improve inequality~\eqref{eq:RevIsodBisecting},
as shown in our Theorem~\ref{th:RevIsodiamBisect0symmIneq}.

\begin{theorem}\label{th:RevIsodiamBisect0symmIneq}
Let $K\in\K^2$ be centrally symmetric and in Behrend-bisecting position. Then,
\begin{equation}\label{eq:RevIsodiamBisect0symm}
\frac{\A(K)}{\D_B(K)^2}\geq \frac{\sqrt{3}}{2}.
\end{equation}
\end{theorem}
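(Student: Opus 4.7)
The plan is to derive the bound by combining Behrend's reverse isodiametric inequality~\eqref{eq:BehrendIneq} with the area-doubling that central symmetry naturally affords. Unlike the proof of Theorem~\ref{th:RevIsodiamBisectIneq}, which uses $\D_B\leq\D$ and yields only $\sqrt{3}/4$, the symmetric case exploits the fact that a centrally symmetric body admits a canonical equal-area bisection through its center, gaining the needed factor of two.

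First, I will place the center of $K$ at the origin and choose any line $\ell$ through the origin, say the $x$-axis. Setting $K_1:=K\cap\{y\geq 0\}$ and $K_2:=K\cap\{y\leq 0\}$, the pair $(K_1,K_2)$ is a bisection of $K$ in the sense of (i)--(ii): the common boundary is the chord $\ell\cap K$, whose only points on $\bd(K)$ are its endpoints. Central symmetry gives $K_2=-K_1$, whence $\A(K_1)=\A(K_2)=\A(K)/2$ and $\D(K_1)=\D(K_2)$.

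Next, I will apply Behrend's reverse isodiametric inequality~\eqref{eq:BehrendIneq} to the convex body $K_1$: there is an invertible linear map $\phi\in End(\R^2)$ putting $\phi K_1$ in Behrend position, so $\A(\phi K_1)\geq(\sqrt{3}/4)\,\D(\phi K_1)^2$. Because $\phi$ is linear, $\phi K$ remains centrally symmetric about the origin and $\phi K_2=-\phi K_1$; hence $(\phi K_1,\phi K_2)$ is a bisection of $\phi K$, giving $\A(\phi K)=2\A(\phi K_1)$ and $\D_B(\phi K)\leq\max\{\D(\phi K_1),\D(\phi K_2)\}=\D(\phi K_1)$. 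Combining these,
\[
\frac{\A(\phi K)}{\D_B(\phi K)^2}\;\geq\;\frac{2\A(\phi K_1)}{\D(\phi K_1)^2}\;\geq\;\frac{\sqrt{3}}{2}.
\]

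Finally, the Behrend-bisecting position hypothesis~\eqref{def:BBP} gives $\A(K)/\D_B(K)^2\geq\A(\phi K)/\D_B(\phi K)^2\geq\sqrt{3}/2$, which is the claim. No serious obstacle is anticipated: the key insight is simply that central symmetry supplies, for free, an equal-area bisection of $K$ through the center, so applying Behrend's general reverse isodiametric inequality to a single half automatically produces the factor-of-two improvement over Theorem~\ref{th:RevIsodiamBisectIneq}.
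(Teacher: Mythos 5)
Your proof is correct, but it follows a genuinely different route from the paper's. The paper works \emph{internally} at $K$: it invokes Lemma~\ref{lem:symmetrics} to produce a \emph{minimizing} bisection $(K_1,-K_1)$ through the center, then uses Lemma~\ref{lem:Behrend-bisect-0symm} (which rests on the necessary condition of Lemma~\ref{lem:behrendBisectPos} together with the characterization in Proposition~\ref{prop:behrendPos}) to conclude that each half is itself in Behrend position, and finally applies \eqref{eq:BehrendIneq} to both halves, using $\D(K_1)=\D(K_2)=\D_B(K)$. You instead take an \emph{arbitrary} central chord bisection, apply an affine normalization $\phi$ to put one half in Behrend position, bound $\A(\phi K)/\D_B(\phi K)^2\geq\sqrt 3/2$ for the transformed body, and transfer back using only the trivial direction of the definition \eqref{def:BBP}, namely $\A(K)/\D_B(K)^2\geq\A(\phi K)/\D_B(\phi K)^2$. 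This is shorter and bypasses all of the structural machinery of Section~\ref{sec:RevIsodiamIneq}; the price is that it yields no information about $K$ itself (the paper's route shows that the halves of a minimizing bisection of a symmetric body in Behrend-bisecting position are in Behrend position, a fact reused in Proposition~\ref{prop:ParallelogramOptimal}). One ingredient you use implicitly is that the supremum defining the Behrend position of $K_1$ is attained by some invertible affine map; this is classical (Behrend) and is the analogue of Lemma~\ref{lem:isodiamMaximum}, and in any case your argument survives with near-maximizers by letting the defect tend to zero. All other steps check out: the central chord meets $\bd(K)$ only at its endpoints since the center is interior, $\phi K_2=-\phi K_1$ gives $\A(\phi K)=2\A(\phi K_1)$ and $\D_B(\phi K)\leq\D(\phi K_1)$, and the chain of inequalities delivers $\sqrt3/2$.
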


In this setting, we also remark that Proposition~\ref{prop:ParallelogramOptimal}
characterizes the parallelograms in Behrend-bisecting position:
these sets are precisely the rectangles formed by joining two squares by a common edge.  
Note that, in particular, a parallelogram formed by joining two equilateral triangles is not in Behrend-bisecting position.


We would also like to note that the proof of Theorem \ref{th:RevIsodiamBisectIneq}
(resp., Theorem~\ref{th:RevIsodiamBisect0symmIneq}) is inspired in the proof of \cite[Th.~1.4]{GMS} (resp.,~\cite[Prop.~1.3]{GMS})
to reprove Behrend's inequality \eqref{eq:BehrendIneq}.
In essence, we provide the corresponding necessary condition of Behrend-bisecting position, see Lemma \ref{lem:behrendBisectPos}
(resp., the necessary condition of ~Behrend-bisecting position for centrally symmetric convex bodies, see Lemma \ref{lem:Behrend-bisect-0symm}),
which differs from the conditions for being in Behrend position (see Proposition~\ref{prop:behrendPos}).

The same spirit of the previous results leads us to study
a \emph{reverse isominwidth inequality} for minimizing bisections, of type
$\A(K)/\w_B(K)^2\leq\alpha$, for some $\alpha\in\mathbb{R}$.
We will follow an approach similar to \cite{GMS}, considering again affine classes of sets in $\K^2$.
In this sense, recall that $K\in\K^2$ is in \emph{isominwidth optimal position} if
\begin{equation}
\label{eq:iop}
\frac{\A(K)}{\w(K)^2}=\inf_{\phi\in End(\mathbb{R}^{2})}\frac{\A(\phi(K))}{\w(\phi(K))^2}.
\end{equation}
The restriction to these suitable affine representatives of planar convex bodies yields,
as in the case of the diameter functional, to the following result:
for any set $K\in\K^2$ in isominwidth optimal position, it holds that
\begin{equation}\label{eq:GMSchymuraIneq}
\A(K)\leq\w(K)^2,
\end{equation}
with equality if and only if $K$ is a square \cite[Th.~1.6]{GMS}.
Our aim is obtaining an analogous inequality to \eqref{eq:GMSchymuraIneq}
for the infimum of the maximum bisecting width
for sets \emph{in a certain special position}.
Thus, given $K\in\K^2$, we will say that $K$ is in \emph{isominwidth-bisecting position} if
\begin{equation}
\label{eq:isominwidthoptimalposition}
\frac{\A(K)}{\w_B(K)^2}=\inf_{\phi\in End(\mathbb R^{2})}\frac{\A(\phi(K))}{\w_B(\phi(K))^2}.
\end{equation}
We will derive in Section~\ref{sec:RevIsominwidthIneq}
some necessary and sufficient conditions for being in isominwidth-bisecting position,
concluding with our Theorem~\ref{th:RevIsominwidthBisectIneq},
which follows again from Bang's inequality~\cite{Ba}
and inequality~\eqref{eq:GMSchymuraIneq}.

\begin{theorem}\label{th:RevIsominwidthBisectIneq}
Let $K\in\K^2$ be in isominwidth-bisecting position. Then,
\begin{equation}\label{eq:RevIsominwidthBisecting}
\frac{\A(K)}{\w_B(K)^2}\leq 4,
\end{equation}
with equality if and only if $K$ is a square $\mathcal{C}$.
\end{theorem}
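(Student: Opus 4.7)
The plan is to combine Bang's inequality on Tarski's plank problem~\cite{Ba} (already invoked for Theorem~\ref{th:isominwidthIneq}) with the isominwidth inequality~\eqref{eq:GMSchymuraIneq} of~\cite{GMS} for convex bodies in isominwidth optimal position. The starting observation is that for any bisection $(K_1,K_2)\in\mathcal{B}(K)$, each piece $K_i$ is contained in a plank of width $\w(K_i)$ (since the width of a planar set coincides with the width of its convex hull), so $K=K_1\cup K_2$ is covered by these two planks. Bang's theorem then yields $\w(K_1)+\w(K_2)\geq \w(K)$, whence $\max\{\w(K_1),\w(K_2)\}\geq \w(K)/2$, and passing to the infimum over bisections produces the key estimate
\[
\w_B(K)\geq \frac{\w(K)}{2}\qquad\text{for every }K\in\mathcal{K}^2.
\]

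With this bound at hand, given $K\in\mathcal{K}^2$ in isominwidth-bisecting position I would pick an invertible affine map $\phi$ for which $\phi(K)$ lies in isominwidth optimal position. Applying~\eqref{eq:GMSchymuraIneq} to $\phi(K)$ and combining with the previous estimate applied to $\phi(K)$ yields
\[
\frac{\A(\phi(K))}{\w_B(\phi(K))^2}\leq \frac{\w(\phi(K))^2}{\w(\phi(K))^2/4}=4,
\]
and the defining property~\eqref{eq:isominwidthoptimalposition} of the isominwidth-bisecting position then forces
\[
\frac{\A(K)}{\w_B(K)^2}\leq \frac{\A(\phi(K))}{\w_B(\phi(K))^2}\leq 4,
\]
which is precisely~\eqref{eq:RevIsominwidthBisecting}.

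For the equality case, if $\A(K)/\w_B(K)^2=4$ the entire chain collapses to equalities. In particular $\A(\phi(K))=\w(\phi(K))^2$, and the equality clause in~\eqref{eq:GMSchymuraIneq} then forces $\phi(K)$ to be a square, so $K$ is a parallelogram. A short direct computation, using that for a parallelogram $P$ with sides $a\leq b$ and angle $\theta$ the line-segment bisection parallel to the longer pair of sides attains the Bang lower bound $\w_B(P)=a\sin\theta/2$, shows that $\A(P)/\w_B(P)^2=4b/(a\sin\theta)\geq 4$, with equality exactly when $a=b$ and $\theta=\pi/2$. The reverse implication (the square realizes the quotient $4$) follows from the same formula.

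I expect the subtlest point to be the application of Bang's theorem to the possibly \emph{non-convex} bisection pieces $K_1,K_2$: one must invoke the invariance of the width functional under passing to convex hulls in order to enclose each $K_i$ inside a plank of width $\w(K_i)$, after which the covering hypothesis of Bang's theorem is satisfied by the convex body $K$ itself. The final parallelogram computation is routine, but necessary because the previous step only pins $K$ down up to an affine transformation.
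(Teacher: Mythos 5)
Your argument is correct, and it rests on the same two pillars as the paper --- Bang's plank theorem and inequality \eqref{eq:GMSchymuraIneq} from \cite{GMS} --- but it is organized differently. The paper first proves the exact identity $\w_B(K)=\w(K)/2$ for \emph{every} $K\in\K^2$ (Lemma~\ref{lem:WidthBisect=Half}: Bang gives the lower bound, and the chord lying halfway between two width-realizing parallel support lines gives a matching bisection), which immediately shows that the isominwidth-bisecting position coincides with the isominwidth optimal position (Corollary~\ref{cor:isominwidthEquiv}); Theorem~\ref{th:RevIsominwidthBisectIneq}, including its equality case, then follows in one line by applying \eqref{eq:GMSchymuraIneq} to $K$ itself. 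You instead use only the one-sided bound $\w_B\geq\w/2$ (your plank-covering application of Bang is fine, and in fact handles curved bisections directly, bypassing the reduction of Lemma~\ref{lem:StraightLine}), transfer the estimate to $K$ through the infimum defining \eqref{eq:isominwidthoptimalposition} by comparing with an affine image $\phi(K)$ in isominwidth optimal position, and then pay for the weaker input in the equality case: tracing the equalities shows $K$ is a parallelogram, and your explicit computation --- which in effect re-proves the upper-bound half of Lemma~\ref{lem:WidthBisect=Half} for parallelograms via the mid-chord bisection --- pins it down to the square. Both routes are sound; the paper's yields the stronger standalone facts ($\w_B=\w/2$ and the equivalence of the two affine positions) and a cleaner equality case, while yours gets by with less machinery at the cost of the extra transfer step and the final case analysis. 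The only step you should make explicit is the existence of $\phi$ attaining the infimum in \eqref{eq:iop} (the analogue of Lemma~\ref{lem:isodiamMaximum}, available from \cite{GMS}), since the equality clause of \eqref{eq:GMSchymuraIneq} is stated for bodies in isominwidth optimal position.
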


\begin{remark}
We point out that $\w_B(\mathcal{C})$ is attained by the bisection
determined by a segment parallel to an edge of $\mathcal C$ dividing $\mathcal{C}$ into two equal-area subsets.
\end{remark}


The paper is organized as follows.
In Section \ref{sec:PropMinBisect} we obtain some general properties of the minimizing bisections
for the maximum bisecting diameter and the maximum bisecting width.
In particular, Lemma~\ref{lem:StraightLine} shows that there always exists a minimizing bisection given by a line segment,
which allows to focus only on this type of bisections along this work.
In Section \ref{sec:IsodiamIneq} we prove Theorem \ref{th:isodiametricIneq},
determining the corresponding optimal set (of fixed area) for the maximum bisecting diameter
by a constructive argument.
Section \ref{sec:IsominwidthIneq} is devoted to show Theorem \ref{th:isominwidthIneq},
which follows directly from Lemma~\ref{lem:WidthBisect=Half}.
Sections~\ref{sec:RevIsodiamIneq} and~\ref{sec:RevIsominwidthIneq} treat the reverse inequalities
under the approach of affine representatives of planar convex bodies.
In Section \ref{sec:RevIsodiamIneq} we demonstrate Theorem~\ref{th:RevIsodiamBisectIneq},
which requires a detailed study concerning the Behrend-bisecting position,
and Section \ref{sec:RevIsominwidthIneq} contains the proof of Theorem~\ref{th:RevIsominwidthBisectIneq}.
Finally, in Section~\ref{sec:OtherSpaces} we explore how to extend
the isodiametric and isominwidth inequalities for bisections
in the Euclidean space of higher dimension (Subsections~\ref{subsec:rn} and~\ref{subsec:rn2}),
as well as in the spherical and hyperbolic spaces (Subsection~\ref{subsec:hn}).

\subsection*{Notation} We now establish some notation used throughout this paper.
The Euclidean distance in $\mathbb R^2$ will be denoted by $d$,
and the Hausdorff distance for planar compact sets will be denoted by $d_\mathcal{H}$.
Given two points $x,y\in\R^2$, $[x,y]$ will represent the \emph{line segment} with endpoints $x$ and $y$.
For every $K\in\K^2$, $\mathrm{Ext}(K)$ will stand for the set of \emph{extreme points} of $K$,
i.e., if $x\in\mathrm{Ext}(K)$, then $x\in [y,z]\subset K$ implies $x=y$ or $x=z$.
For any planar compact set $A$, we denote
by $\conv(A)$ and $\lin(A)$ the
\emph{convex hull} and the \emph{linear hull} of $A$, respectively.
Moreover, 
we denote by $A^\bot$ the \emph{orthogonal complement} of $A$, i.e.,
$A^\bot=\{x\in\mathbb R^2:x^Ty=0,\,\forall y\in A\}$.

Furthermore, for $K\in\K^2$ and $u\in\R^2\setminus\{0\}$, the \emph{Steiner symmetrization} $s_u(K)$ of $K$
with respect to $\lin(u)$ is defined as the only symmetric set with respect to
$\lin(u)$ such that each segment $(tu+u^\bot)\cap s_u(K)$ has the same length than $(tu+u^\bot)\cap K$, for every $t\in\R$  \cite{BF, SY}.
It is well known that 
$s_u(K)\in\K^2$ and
\begin{equation}\label{eq:SteinerSymm}
\A(s_u(K))=\A(K),\quad\D(s_u(K))\leq\D(K).
\end{equation}

\section{Properties of minimizing bisections}\label{sec:PropMinBisect}

In this section we will obtain some interesting properties
for the minimizing bisections of the two functionals $\D_B(K)$, $\w_B(K)$ we are considering.
Lemma~\ref{lem:StraightLine} shows that there is always one of these bisections given by a line segment, extending \cite[Prop.~4]{MPS},
and Lemma~\ref{lem:EqualValueBisection} further proves that the subsets of that bisection are \emph{in equilibrium}, in some sense.
Besides, we also show in Lemma~\ref{lem:minimum}
that the infimums in \eqref{eq:db} and \eqref{eq:wB} are attained (and so they are actually minimums).

\begin{lemma}\label{lem:StraightLine}
Let $K\in\K^2$ and $\rho>0$.
For any bisection of $K$ with maximum bisecting diameter (or width) equal to $\rho$,
there exists another bisection of $K$ given by a line segment with maximum bisecting diameter (or width) smaller than or equal to $\rho$.
\end{lemma}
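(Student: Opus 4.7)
The plan is to replace the bisection curve $l$ by the straight segment $[p,q]$, where $p=l(-1)$ and $q=l(1)$, and show that the resulting chord bisection has maximum diameter (resp.\ width) at most $\rho$. Let $L$ be the line through $p$ and $q$, let $H^+,H^-$ be the two closed half-planes bounded by $L$, and let $\gamma_1,\gamma_2$ be the two arcs into which $\{p,q\}$ divides $\bd(K)$, with $\gamma_1\subseteq H^+$ and $\gamma_2\subseteq H^-$. Assuming $L\cap K=[p,q]$ (the degenerate case is handled by a small perturbation of $p$ or $q$ along $\bd(K)$), the chord splits $K$ into the two convex pieces $K_1':=K\cap H^+$ and $K_2':=K\cap H^-$. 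I plan to prove the containments
\[
K_1'\subseteq\conv(K_1)\qquad\text{and}\qquad K_2'\subseteq\conv(K_2),
\]
from which the lemma follows at once: since $\D(\conv(A))=\D(A)$ and $\w(\conv(A))=\w(A)$ for any compact set $A$, and both functionals are monotone under inclusion, one gets $\D(K_i')\leq\D(K_i)\leq\rho$ and $\w(K_i')\leq\w(K_i)\leq\rho$.

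The containments can be established directly in the \emph{generic} case, where $l\setminus\{p,q\}$ lies in one open half-plane, say $\inter(H^+)$. In this situation, the Jordan curve $l\cup\gamma_1\subseteq H^+$ encloses exactly one of the two bisection regions, which must therefore be the region contained in $H^+$; we identify it with $K_1$, so that $K_1\subseteq H^+$ and consequently $K_2\supseteq K\cap H^-$ and $\gamma_2\subseteq K_2$. By Krein--Milman, $K_1'$ equals the convex hull of its extreme points, and each such extreme point is either an extreme point of $K$ lying on $\gamma_1$, or one of the chord endpoints $p,q$; in either case it belongs to $K_1$, since $\gamma_1\subseteq K_1$ and $\{p,q\}\subseteq l\subseteq K_1$. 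Hence $K_1'\subseteq\conv(K_1)$. A symmetric argument, using $\gamma_2\subseteq K_2$, gives $K_2'\subseteq\conv(K_2)$.

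The non-generic case, in which $l$ meets $L$ at interior points of $[p,q]$, I would reduce to the generic one by a perturbation argument: approximate $l$ by a sequence of bisection curves $l_n$ satisfying $l_n\setminus\{p,q\}\subseteq\inter(H^+)$ (obtained by locally lifting the sub-arcs of $l$ that cross $L$) such that the associated bisections $(K_1^n,K_2^n)$ converge to $(K_1,K_2)$ in the Hausdorff metric. Since the chord bisection $(K_1',K_2')$ does not depend on $n$, applying the generic-case argument to $l_n$ gives $\D(K_i')\leq\max\{\D(K_1^n),\D(K_2^n)\}$ and likewise for the width; letting $n\to\infty$ and invoking the continuity of diameter and width under Hausdorff convergence yields the desired bound. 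The main obstacle I anticipate is making this perturbation argument fully rigorous---in particular guaranteeing that each $l_n$ is an injective bisection curve anchored at $p,q$ and controlling the Hausdorff convergence of the two regions with matching control on both functionals---which I expect to be the technically delicate step of the proof.
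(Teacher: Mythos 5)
Your generic case is sound, and it is essentially the paper's own argument in a different dress: the paper works with $M_i=\bd(K)\cap K_i$, observes $\mathrm{Ext}(\conv(M_i))\subseteq M_i$, and uses that the diameter is attained at extreme points (and that the width is unchanged under convex hulls) to get $\D(\conv(M_i))\leq\D(K_i)$ and $\w(\conv(M_i))\leq\w(K_i)$, the pair $(\conv(M_1),\conv(M_2))$ being the bisection determined by the chord $[p,q]$. The genuine gap in your proposal is the non-generic case, where $l$ meets $L$ beyond its endpoints. You reduce it to the generic case by approximating $l$ with injective curves $l_n$ anchored at $p,q$ and lying in $\inter(H^+)$, with Hausdorff convergence of the two regions; but this step is only announced, not carried out, and you yourself flag it as the delicate point. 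It is genuinely nontrivial: for an arbitrary injective continuous cross-cut, $l^{-1}(L)$ can be a complicated closed set (a Cantor-type set, say), and producing injective curves with the required two-sided Hausdorff control of both regions is not a routine ``local lifting''. The same looseness affects your degenerate case $K\cap L\neq[p,q]$, which you dispatch by ``perturbing $p$ or $q$ along $\bd(K)$'': these are the endpoints of the given curve $l$ and cannot simply be moved without rebuilding the bisection.

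The good news is that the whole case distinction, and hence the approximation machinery, is unnecessary: nothing in your extreme-point computation uses that $l\setminus\{p,q\}$ stays in one open half-plane. If $K\cap L=[p,q]$, the two arcs of $\bd(K)$ determined by $\{p,q\}$ lie (apart from their endpoints) in the two open half-planes; each open arc is connected and disjoint from $K_1\cap K_2=l([-1,1])$, so it is contained in exactly one of the closed pieces, and we may label so that $\gamma_1\subseteq K_1$, $\gamma_2\subseteq K_2$. Then $\mathrm{Ext}(K\cap H^+)\subseteq(\mathrm{Ext}(K)\cap H^+)\cup\{p,q\}\subseteq\gamma_1\cup\{p,q\}\subseteq K_1$, and symmetrically on the other side, so $K\cap H^+\subseteq\conv(K_1)$ and $K\cap H^-\subseteq\conv(K_2)$ for \emph{every} bisection, wherever the curve wanders. (In the degenerate case $[p,q]\subseteq\bd(K)$, the chord bisection is $([p,q],K)$, and the same inequalities hold because all extreme points of $K$ lie on the arc contained in one piece.) With this correction your argument collapses to the paper's proof; as written, the lemma is only established for bisecting curves avoiding the line through their endpoints.
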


\begin{proof}
Consider $(K_1,K_2)\in\mathcal{B}(K)$ determined by an injective continuous curve $l:[-1,1]\rightarrow K$ with $l(-1)$, $l(1)\in\bd(K)$.
Suppose that $\max\{\D(K_1),\D(K_2)\}=\rho$ (or $\max\{\w(K_1),\w(K_2)\}=\rho$).
Call $M_1:=\bd(K)\cap K_1$ and $M_2:=\bd(K)\cap K_2$.
Since $M_i\subset K_i$, then $\D(M_i)\leq\D(K_i)$ and $\w(M_i)\leq\w(K_i)$, for $i=1,2$.

Notice that the line segment $[l(-1),l(1)]$ determines $\conv(M_i)$, $i=1,2$. 
We claim that $\D(M_i)=\D(\conv(M_i))$, $i=1,2$.
On the one hand, $M_i\subset\conv(M_i)$ implies that $\D(M_i)\leq\D(\conv(M_i))$.
And on the other hand, $\mathrm{Ext}(\conv(M_i))\subset M_i$,
since any point in $\mathrm{Ext}(\conv(M_i))$ cannot be expressed as an strict convex combination of points in $\conv(M_i)$. 
Furthermore, since the diameter in $\mathbb R^2$ is always attained by a pair of extreme points,
it follows that
$$\D(\conv(M_i))=\D(\mathrm{Ext}(\conv(M_i)))\leq \D(M_i).$$
We also have that $\w(M_i)=\w(\conv(M_i))$, $i=1,2$,
as a direct consequence of the fact that $M_i$ is contained between two parallel lines if and only if
$\conv(M_i)$ is contained between those lines.
Then, $\conv(M_1)$, $\conv(M_2)$ are two subsets of $K$ providing a bisection of $K$,
satisfying
\[
\max\{\D(\conv(M_1)),\D(\conv(M_2))\}\leq\max\{\D(K_1),\D(K_2)\}=\rho,
\]
as well as
\[
\max\{\w(\conv(M_1)),\w(\conv(M_2))\}\leq\max\{\w(K_1),\w(K_2)\}=\rho,
\]
and so, $(\conv(M_1)$, $\conv(M_2))$ is a bisection of $K$
given by a line segment with maximum bisecting diameter (or width)
smaller than or equal to $\rho$, as stated.
\end{proof}

\begin{lemma}
\label{lem:minimum}
Let $K\in\K^2$. Then,
\[
\D_B(K)=\min_{(K_1,K_2)\in\mathcal{B}(K)}\max\{\D(K_1),\D(K_2)\},
\]
and
\[
\w_B(K)=\min_{(K_1,K_2)\in\mathcal{B}(K)}\max\{\w(K_1),\w(K_2)\}.
\]
\end{lemma}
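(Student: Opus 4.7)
The plan is to use Lemma~\ref{lem:StraightLine} to reduce the problem to chord bisections and then apply a compactness argument on $\bd(K)\times\bd(K)$. Indeed, Lemma~\ref{lem:StraightLine} shows that the infima in~\eqref{eq:db} and~\eqref{eq:wB} remain unchanged when restricted to the subfamily of bisections determined by line segments, so it suffices to show the infimum over this subfamily is attained.

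Any such chord bisection of $K$ is determined by an unordered pair of distinct points $p,q\in\bd(K)$, yielding two closed convex pieces $K_1(p,q)$, $K_2(p,q)$. Set $f_\D(p,q):=\max\{\D(K_1(p,q)),\D(K_2(p,q))\}$ and define $f_\w(p,q)$ analogously. Since both diameter and width are continuous on $\K^2$ under the Hausdorff distance, and since the two pieces (labeled consistently via a fixed reference point in the interior of one of the pieces of a chosen limit bisection) depend continuously on $(p,q)$ for $p\neq q$, both $f_\D$ and $f_\w$ are continuous on the open subset $\{p\neq q\}$ of $\bd(K)\times\bd(K)$.

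Given a minimizing sequence $(p_n,q_n)$ for $f_\D$, compactness of $\bd(K)$ yields a subsequence with $p_n\to p$, $q_n\to q$. If $p\neq q$, continuity gives $f_\D(p,q)=\D_B(K)$, so $(K_1(p,q),K_2(p,q))$ realizes the infimum. Otherwise $p=q$, so the chord $[p_n,q_n]$ collapses to a point: one piece has area (and hence diameter) tending to zero, while the other converges to $K$ in Hausdorff distance, giving $f_\D(p_n,q_n)\to\D(K)$ and forcing $\D_B(K)=\D(K)$. Since $\D(K_i)\leq\D(K)$ for every chord bisection, the chain $\D_B(K)\leq f_\D(p,q)\leq\D(K)=\D_B(K)$ then shows any chord realizes the minimum (and such a chord exists because $\bd(K)$ contains more than one point).

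The identical argument applies verbatim to $f_\w$, proving the claim for $\w_B(K)$, using that $\w(K_i)\leq\w(K)$ whenever $K_i\subset K$ and that width is continuous under Hausdorff convergence. The only real subtlety is the degenerate limit $p=q$; the observation that this case forces $\D_B(K)=\D(K)$ (respectively $\w_B(K)=\w(K)$), so that any chord already attains the minimum, resolves it cleanly.
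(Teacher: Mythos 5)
Your overall strategy (reduce to chord bisections via Lemma~\ref{lem:StraightLine}, then extract a convergent subsequence from a minimizing sequence) is sound and runs parallel to the paper's proof, which instead applies the Blaschke Selection Theorem directly to the pieces $K_{1,i}$; parametrizing by pairs of boundary points is a legitimate alternative. However, there is a gap in your case analysis. The continuity you claim on all of $\{p\neq q\}$ is not correct: a pair $p\neq q$ whose segment $[p,q]$ lies entirely in $\bd(K)$ does not determine a bisection at all (condition (ii) of the definition forbids interior points of the curve from lying on $\bd(K)$; the two ``pieces'' would be $K$ itself and a boundary segment), so $f_\D$ is defined, and continuous, only on the pairs whose open chord lies in $\inter(K)$. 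A minimizing sequence of admissible pairs can converge to such a forbidden pair with $p\neq q$ --- this forces $\D_B(K)=\D(K)$ and does happen, e.g.\ for a triangle, where every chord bisection is minimizing and chords approaching an edge form a minimizing sequence --- and then your step ``continuity gives $f_\D(p,q)=\D_B(K)$, so $(K_1(p,q),K_2(p,q))$ realizes the infimum'' produces an object outside $\mathcal B(K)$. So your dichotomy ``$p\neq q$ versus $p=q$'' must be refined into three cases; fortunately the missing case is resolved by exactly the observation you already use for $p=q$: the large piece still converges to $K$ in Hausdorff distance, hence $f_\D(p_n,q_n)\to\D(K)$, forcing $\D_B(K)=\D(K)$, so that every admissible chord bisection is a minimizer.

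A second, smaller lapse: ``one piece has area (and hence diameter) tending to zero'' is a non sequitur, since small area does not imply small diameter (thin slivers). The implication is also unnecessary: because $f_\D\leq\D(K)$ for every chord bisection, the limit $f_\D(p_n,q_n)\to\D(K)$ already follows from the large piece converging to $K$. But that Hausdorff convergence itself deserves a line of justification rather than a bare assertion; for instance, fix a ball $B(x_0,r)\subset\inter(K)$ and note that for any point $y$ of the small piece the segment $[x_0,y]$ meets the chord at some $z$ with $\mathrm{dist}(z,\bd(K))$ small, while $\conv(\{y\}\cup B(x_0,r))\subset K$ forces $|y-z|\leq \D(K)\,\mathrm{dist}(z,\bd(K))/r$, so the small piece collapses onto the (shrinking or boundary) chord and the large piece converges to $K$. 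The same remarks apply verbatim to the width statement. With these two repairs your argument is complete and is an acceptable substitute for the paper's Blaschke-selection argument.
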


\begin{proof}
We will focus on $\D_B(K)$, since the case of $\w_B(K)$ is analogous.
Note that Lemma~\ref{lem:StraightLine} allows to consider only bisections by line segments in order to compute $\D_B(K)$.
Then, in view of~\eqref{eq:db},
let $\{[a_i,b_i]\}_{i\in\mathbb N}\subset  K$ be a sequence of line segments providing bisections $(K_{1,i},K_{2,i})$ of $K$,
such that
\[
\D_B(K)=\lim_{i\rightarrow\infty}\max\{\D(K_{1,i}),\D(K_{2,i})\}.
\]

Since $\{K_{1,i}\}_{i\in\mathbb{N}} \subset K$ is an absolutely bounded sequence of convex bodies,
\emph{Blaschke Selection Theorem} \cite[Th.~1.8.7]{Sch} implies the existence of a 
subsequence $\{K_{1,i_j}\}$ and a subset $K_1\in\K^2$ such that $K_1\subset K$ and
$\displaystyle{\lim_{i_j\rightarrow\infty}K_{1,i_j}=K_1}$ in Hausdorff metric.
Considering now the corresponding subsequence $\{K_{2,i_j}\}$ of $\{K_{2,i}\}$,
it is clear that  $\displaystyle{\lim_{i_j\rightarrow\infty}K_{2,i_j}=K_2}$,
where $K_2=\overline{K\setminus K_1}$.
Note that, without loss of generality, we can assume that the subsequences are the sequences themselves.
In particular, we also obtain that $\displaystyle{\lim_{i\rightarrow\infty}[a_i,b_i]=[a,b]}$,
for certain $a,b\in K$, with $[a,b]=K_1\cap K_2$, and so $(K_1,K_2)$ is a bisection of $K$.
Since the diameter 
is a continuous functional with respect to Hausdorff metric, we have that
$\displaystyle{\lim_{i\rightarrow\infty}\D(K_{1,i})=\D(K_1)}$ and $\displaystyle{\lim_{i\rightarrow\infty}\D(K_{2,i})=\D(K_2)}$,
which implies that $\D_B(K)=\max\{\D(K_1),\D(K_2)\}$, as stated.
\end{proof}


\begin{lemma}\label{lem:EqualValueBisection}
Let $K\in\K^2$.
There exists a bisection $(K_1,K_2)$ of $K$ minimizing the maximum bisecting diameter (or width) of $K$
such that $\D_B(K)=\D(K_1)=\D(K_2)$ (or $\w_B(K)=\w(K_1)=\w(K_2)$).
\end{lemma}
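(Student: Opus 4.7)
The plan is to take a minimizing straight bisection---which exists by Lemmas~\ref{lem:minimum} and~\ref{lem:StraightLine}---and, if its two pieces do not already have equal diameter (resp.\ width), slide the bisecting chord parallel to itself so that the intermediate value theorem produces a balanced bisection that is still minimizing. I detail the diameter case; the width case is identical because both $\D$ and $\w$ are monotone under inclusion for convex sets (if $A\subseteq B$, the thickness of $A$ in every direction is at most that of $B$, so $\D(A)\leq\D(B)$ and $\w(A)\leq\w(B)$).

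Let $(K_1,K_2)$ be a minimizing bisection with chord $[a,b]$, and, after relabelling if necessary, assume $\D(K_1)>\D(K_2)$, so that $\D(K_1)=\D_B(K)$. Let $\nu$ be a unit normal to $[a,b]$ pointing into $K_1$ and, for $t\in[0,T]$ with $T$ the largest parameter making $L_t:=L_0+t\nu$ meet $K$, define a bisection $(K_1^t,K_2^t)$ by taking $K_1^t:=K\cap H_t^{+}$, where $H_t^{+}$ is the closed half-plane bounded by $L_t$ on the original $K_1$ side, and $K_2^t:=\overline{K\setminus K_1^t}$. This gives a continuous family of bisections with $K_1^t\subseteq K_1$ and $K_2^t\supseteq K_2$ for every $t$, so by the monotonicity above we get $\D(K_1^t)\leq\D(K_1)=\D_B(K)$ and $\D(K_2^t)\geq\D(K_2)$.

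Both diameters depend continuously on $t$ (since $K_i^t$ varies continuously in Hausdorff metric and $\D$ is Hausdorff-continuous), so $\varphi(t):=\D(K_1^t)-\D(K_2^t)$ is a continuous function with $\varphi(0)>0$. As $t\to T$ the piece $K_1^t$ collapses onto a subset of the tangent set $L_T\cap K$ while $K_2^t$ tends to $K$, so $\D(K_2^t)\to\D(K)$ and, in the generic case, $\D(K_1^t)$ tends to a strictly smaller value, forcing $\varphi$ to become non-positive. The intermediate value theorem then furnishes $t^{*}\in[0,T]$ with $\D(K_1^{t^{*}})=\D(K_2^{t^{*}})$, and combining $\D(K_1^{t^{*}})\leq\D_B(K)$ with the infimum characterization of $\D_B(K)$ yields $\max\{\D(K_1^{t^{*}}),\D(K_2^{t^{*}})\}=\D_B(K)$, delivering the desired balanced minimizing bisection.

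The main obstacle I anticipate is the degenerate situation in which $\D(K)=\D_B(K)$ and the tangent set $L_T\cap K$ happens to be a boundary segment of length $\D(K)$ parallel to $[a,b]$, since then $\varphi$ could remain nonnegative on all of $[0,T)$ and the sign change only occurs at the degenerate limit. The cleanest way to handle this, which I would use if the direct argument stalls, is to approximate $K$ in Hausdorff distance by smooth strictly convex bodies---for which no such parallel diameter-realizing boundary segments exist and the sliding argument goes through verbatim---and then pass to the limit via Blaschke selection together with the Hausdorff continuity of the functionals $\D_B$ and $\w_B$.
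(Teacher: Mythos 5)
Your argument is essentially the paper's own proof of this lemma: slide the bisecting chord parallel to itself so that the larger-diameter (resp.\ larger-width) piece shrinks, use monotonicity under inclusion and continuity of $\D$ and $\w$ along the family, apply Bolzano's theorem, and bound the crossing value by the initial value $\D_B(K)$ (resp.\ $\w_B(K)$) of the shrinking piece. The degenerate endpoint you worry about is not excluded in the paper either --- the authors simply accept degenerate minimizing bisections (see the remark following the lemma, with the equilateral triangle, where indeed $\D_B(K)=\D(K)$ in exactly this situation) --- so your smooth-approximation fallback, which would in any case require a proof of the Hausdorff continuity of $\D_B$ that the paper does not provide, is not needed under their conventions.
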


\begin{proof}
This is a consequence of the continuity of the diameter and the width functionals.
Taking into account Lemmas~\ref{lem:StraightLine} and~\ref{lem:minimum},
let $(K_1,K_2)$ be a bisection of $K$ minimizing the maximum bisecting diameter (or width),
determined by the line segment $L=K_1\cap K_2$.
Fix $u$ an orthogonal vector to $\mathrm{span}(L)$, and let $t_1<0<t_2$ be such that $K\cap(tu+L)\neq\emptyset$
when and only when $t\in[t_1,t_2]$.
Moreover let $K_1^t=K\cap\{su+L:s\in[t_1,t]\}$ and $K_2^t=K\cap\{su+L:s\in[t,t_2]\}$, for every $t\in[t_1,t_2]$, so that $K_i^0=K_i$, $i=1,2$.
In particular, we have that $K\cap(t_iu+L)\subset\mathrm{bd}(K)$, $i=1,2$, and thus $K_1^{t_2}=K_2^{t_1}=K$.

For $i=1,2$, let $f_i,g_i:[t_1,t_2]\rightarrow[0,\D(K)]$
be such that $f_1(t)=\D(K_1^t)$, $f_2(t)=\D(K_2^t)$,
and $g_1(t)=\w(K^t_1)$, $g_2(t)=\w(K_2^t)$.
By direct inclusion of sets, we have that $f_1$ and $g_1$ are non-decreasing,
whereas $f_2$ and $g_2$ are non-increasing. Moreover, these four functions are continuous,
with $f_1(t_2)=\D(K)=f_2(t_1)$ and $g_1(t_2)=\w(K)=g_2(t_1)$.

If $f_1(0)=f_2(0)$ (resp.,~$g_1(0)=g_2(0)$), then $(K_1,K_2)$ is a minimizing bisection with $\D_B(K)=\D(K_1)=\D(K_2)$
(resp.,~$\w_B(K)=\w(K_1)=\w(K_2)$), as desired.
Otherwise, let us suppose without loss of generality that $f_1(0)<f_2(0)=\D(K_2)=\D_B(K)$
(resp.,~$g_1(0)<g_2(0)=\w(K_2)=\w_B(K)$). Since
\[
f_1(t_2)=\D(K)\geq\D(K_2^{t_2})=f_2(t_2),
\]
(resp.,~$g_1(t_2)\geq g_2(t_2)$),
\emph{Bolzano Theorem} implies that there exists $t_0\in[0,t_2]$ such that
$f_1(t_0)=f_2(t_0)$ (resp.,~$g_1(t_0)=g_2(t_0)$).
By using the monotonicity of the functions, we have that
\[
\D(K_1)=f_1(0)\leq f_1(t_0)=f_2(t_0)\leq f_2(0)=\D(K_2)=\D_B(K)
\]
and
\[
\w(K_1)=g_1(0)\leq g_1(t_0)=g_2(t_0)\leq g_2(0)=\w(K_2)=\w_B(K),
\]
thus $\D(K_1^{t_0})=\D(K_2^{t_0})\leq\D_B(K)$
(resp.,~$\w(K_1^{t_0})=\w(K_2^{t_0})\leq\w_B(K)$),
and hence $(K_1^{t_0}, K_2^{t_0})$
is a minimizing bisection of $K$ providing subsets of equal diameters (or widths), as desired.
\end{proof}

\begin{remark}
In fact, previous Lemma~\ref{lem:EqualValueBisection} proves that for every minimizing bisection determined by a line segment $l$,
there exists another minimizing bisection $(K_1',K_2')$ with $\D(K_1')=\D(K_2')$ (or $\w(K_1')=\w(K_2')$) and determined by a line segment parallel to $l$.
\end{remark}

\begin{remark}
A minimizing bisection $(K_1,K_2)$ for $D_B$ with subsets of equal \emph{diameters} as in Lemma \ref{lem:EqualValueBisection}
might be degenerate, that is, $K_1$ or $K_2$ might be reduced to a line segment.
For instance, let $\mathcal T\in\mathcal K^2$ be an equilateral triangle of vertices $p_i$, $i=1,2,3$. Then
$(\mathcal T,[p_1,p_2])$ is a minimizing bisection with $\D_B(\mathcal T)=\D(\mathcal T)=\D([p_1,p_2])$.
This is not the case for the minimizing bisections for $\w_B$ with subsets of equal widths,
which have to split any convex body into
two non-degenerate subsets, since the width of a line segment trivially vanishes.
\end{remark}

\section{The isodiametric inequality}\label{sec:IsodiamIneq}
In this section we will prove our Theorem~\ref{th:isodiametricIneq}, providing an isodiametric-type inequality involving $D_B$.
As we will see, the proof of this result is constructive, yielding the corresponding
optimal set. We first prove Lemma~\ref{lem:Symmetry}.

\begin{lemma}\label{lem:Symmetry}
There exists a maximizer $K_0\in\K^2$ of the quotient $\A(K)/\D_B(K)^2$,
with $\D_B(K_0)$ attained by a bisection of $K_0$ determined by a line segment $[(-a,0),(a,0)]$, $a>0$,
such that $K_0$ is symmetric with respect
to the line $L=\{(x_1,x_2)\in\R^2:x_1=0\}$.
\end{lemma}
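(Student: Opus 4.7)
The plan is to first establish the existence of a maximizer of $\A(K)/\D_B(K)^2$ via compactness, and then to exhibit a symmetric one by Steiner-symmetrizing an arbitrary maximizer in the direction perpendicular to a canonical bisecting segment. For existence, I would use the scale- and motion-invariance of the quotient (Remark~\ref{re:inv}) to normalise $\D_B(K)=1$. For any bisection $(K_1,K_2)$ of $K$, a diametral chord of $K$ joining points in distinct $K_i$'s must cross the bisecting curve at some common point $r\in K_1\cap K_2$, so $\D(K)\leq\D(K_1)+\D(K_2)\leq 2\D_B(K)=2$; hence the normalised family has uniformly bounded diameter, and Blaschke selection produces a Hausdorff-convergent maximising subsequence $K_i\to K^*$. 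Continuity of the area under Hausdorff convergence together with lower semicontinuity of $\D_B$ --- which follows by taking the Hausdorff limit of the line-segment bisections provided by Lemmas~\ref{lem:StraightLine} and~\ref{lem:minimum} and using continuity of the diameter on each half --- ensure that $K^*$ is an actual maximizer.

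Once $K^*$ is at hand, I would apply Lemma~\ref{lem:StraightLine} to obtain a minimizing bisection of $K^*$ given by a line segment, and by a rigid motion place this segment as $[(-a,0),(a,0)]$, $a>0$, with both endpoints on $\bd(K^*)$. Convexity of $K^*$ together with the endpoints lying on $\bd(K^*)$ forces $K^*\cap\{x_2=0\}=[(-a,0),(a,0)]$, so the associated bisection is exactly $(K^{*+},K^{*-})$ with $K^{*\pm}=K^*\cap\{\pm x_2\geq 0\}$. Now set $u=(0,1)$ and $K_0:=s_u(K^*)$. The horizontal slice of $K_0$ at height $0$ has the same length $2a$ as that of $K^*$ and is centred on the $y$-axis, so it still equals $[(-a,0),(a,0)]$; hence this segment remains a valid bisecting chord of $K_0$, and $K_0$ is, by construction, symmetric with respect to the line $L$.

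It then remains to check that $K_0$ is itself a maximizer. A slice-by-slice identification gives $K_0\cap\{\pm x_2\geq 0\}=s_u(K^{*\pm})$, since at every horizontal height the slice of $K^*$ and of $K^{*\pm}$ coincide on the relevant half-plane. Applying~\eqref{eq:SteinerSymm} to each half separately yields
\[
\max\{\D(s_u(K^{*+})),\D(s_u(K^{*-}))\}\leq\max\{\D(K^{*+}),\D(K^{*-})\}=\D_B(K^*),
\]
and hence $\D_B(K_0)\leq\D_B(K^*)$, while $\A(K_0)=\A(K^*)$ by~\eqref{eq:SteinerSymm}. Maximality of $K^*$ forces equality in both quantities, so $K_0$ is the desired symmetric maximizer. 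I expect the main subtlety to lie in Step~1, specifically in pinning down the lower semicontinuity of $\D_B$ under Hausdorff convergence and ruling out degeneration of the limit; once this is in place, the slice-by-slice identification and the half-by-half application of~\eqref{eq:SteinerSymm} finish the argument in a routine way.
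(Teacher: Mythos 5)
Your proposal is correct and follows essentially the same route as the paper: existence of a maximizer via normalisation, a uniform bound coming from $\D(K)\leq 2\,\D_B(K)$, Blaschke selection and continuity/semicontinuity, followed by Steiner symmetrization $s_{e_2}$ about the vertical line through the midpoint of a line-segment minimizing bisection, applied half-by-half via~\eqref{eq:SteinerSymm} so that the area is preserved, $\D_B$ does not increase, and $[(-a,0),(a,0)]$ remains a bisecting segment. The only differences are cosmetic (you normalise $\D_B=1$ rather than $\A=1$, and you spell out the semicontinuity of $\D_B$ and the identification $K^*\cap\{x_2=0\}=[(-a,0),(a,0)]$ that the paper treats as immediate).
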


\begin{proof}
Consider the supremum
$$\gamma:=\sup_{K\in\K^2}\frac{\A(K)}{\D_B(K)^2}>0.$$
Taking into account that the area and the diameter functionals
are homogeneous with respect to dilations (see Remark~\ref{re:inv}),
we can normalize to unit area and so
$$\gamma=\sup_{\substack{K\in\K^2 \\[0.6mm] \A(K)=1}}\frac{1}{\D_B(K)^2}.$$

By the definition of supremum, let $\{K_n\}$ be a sequence in $\K^2$
with $\lim_{n\rightarrow\infty}\D_B(K_n)^2=\gamma^{-1}$, and $\A(K_n)=1$ for every $n\in\mathbb N$.
We claim that $\D_B(K_n)\leq C$, for certain $C>0$.
Otherwise, $\D_B(K_n)$ would tend to infinity, which contradicts the positivity of $\gamma$.
Hence $\D(K_n)\leq 2\,\D_B(K_n)\leq 2\,C$, which implies that $\{K_n\}$ is a bounded sequence.
After a suitable translation of each $K_n$, we can assume that $\{K_n\}$ is absolutely bounded.
Hence, by Blaschke Selection Theorem, there exists a subsequence of $\{K_n\}$
convergent to some $\widetilde{K}\in\K^2$ in Hausdorff metric.
By continuity, it follows that
$$\frac{1}{\D_B(\widetilde{K})^2}=\gamma,$$ 
and so $\widetilde{K}$ is a maximizer of the quotient.

By Lemma~\ref{lem:StraightLine}, we can now suppose without loss of generality that
$\D_B(\widetilde{K})$ is given by a bisection $(K_1,K_2)$ of $\widetilde{K}$,
with $K_1=\wt{K}\cap H^+$, $K_2=\wt{K}\cap H^-$, $K_1\cap K_2=[(-a,0),(a,0)]$, for some $a\in[0,\D(K)/2]$,
where $H^+=\{(x_1,x_2)\in\R^2:x_2\geq 0\}$ and $H^-=\{(x_1,x_2)\in\R^2:x_2\leq 0\}$.
Call $e_2=(0,1)\in\mathbb R^2$. By applying Steiner symmetrization $s_{e_2}$ with respect to the vertical line
$\mathrm{span}(e_2)$, we easily get that
$s_{e_2}(\wt{K})=s_{e_2}(K_1)\cup s_{e_2}(K_2)$ and $s_{e_2}(K_1)\cap s_{e_2}(K_2)=[(-a,0),(a,0)]$.
Denoting by $K_0:=s_{e_2}(\wt{K})$ and $K_{0,i}:=s_{e_2}(K_i)$,
we have by~\eqref{eq:SteinerSymm} that $\A(K_{0,i})=\A(K_i)$ and $\D(K_{0,i})\leq\D(K_i)$, $i=1,2$,
and so $\A(K_0)=\A(\wt{K})$ and $\D_B(K_0)\leq\D_B(\wt{K})$.
Since $\wt{K}$ is a maximizer of the quotient $\A(K)/\D_B(K)^2$,
then necessarily $K_0$ is also a maximizer, which possesses the desired symmetry by construction.
\end{proof}

\begin{proof}[Proof of Theorem \ref{th:isodiametricIneq}]
Consider an arbitrary $K\in\K^2$. We will apply several transformations to $K$,
without decreasing the enclosed area, arriving at the end of the process at the set $Q\in\K^2$,
which satisfies $\A(Q)\geq\A(K)$ and $\D_B(Q)=\D_B(K)$. This will prove the maximality of $Q$.

Let us suppose without loss of generality that $(K_1,K_2)$ is a bisection of $K$ providing $\D_B(K)$,
with $K_1=K\cap H^+$, $K_2=K\cap H^-$, $K_1\cap K_2=[(-a,0),(a,0)]$, for some $a\in[0,\D_B(K)/2]$,
where $H^+=\{(x_1,x_2)\in\R^2:x_2\geq 0\}$ and $H^-=\{(x_1,x_2)\in\R^2:x_2\leq 0\}$, in view of Lemma~\ref{lem:Symmetry}.
We can also assume that $K$ is symmetric with respect to the vertical line $L=\{x\in\R^2:x_1=0\}$
and, by Lemma \ref{lem:EqualValueBisection},
that $\D(K_1)=\D(K_2)=\D_B(K)$.

Since $K$ is convex and compact, and $(a,0)\in\mathrm{bd}(K)$, then there exists a supporting
line $M_+$ to $K$ at $(a,0)$. Due to the symmetry of $K$, the symmetric
line of $M_+$ with respect to $L$ is also a supporting line at $(-a,0)$, namely $M_-$.
By flipping the situation if necessary, we can suppose that
the slope of $M_+$ is non-negative, and so $M_+=\{(x_1,x_2)\in\R^2:x_2=m(x_1-a)\}$, for some $m\geq 0$.
Additionally, let $B_{\pm}=B((\pm a,0),\D_B(K))$ be the closed balls centered at $(\pm a,0)$ and of radius $\D_B(K)$.
Since $\D(K_i)=\D_B(K)$ and $(\pm a,0)\in K_i$, it follows that $K_i$ is necessarily contained in the symmetric lens $B_+\cap B_-$, for $i=1,2$.

If $M_+$ is not vertical, we have that $K_2$ is contained in the triangle $T$ determined by $M_+$, $M_-$, and the horizontal line
$\{(x_1,x_2)\in\R^2:x_2=0\}$.
\begin{figure}[ht]
  \includegraphics[width=0.4\textwidth]{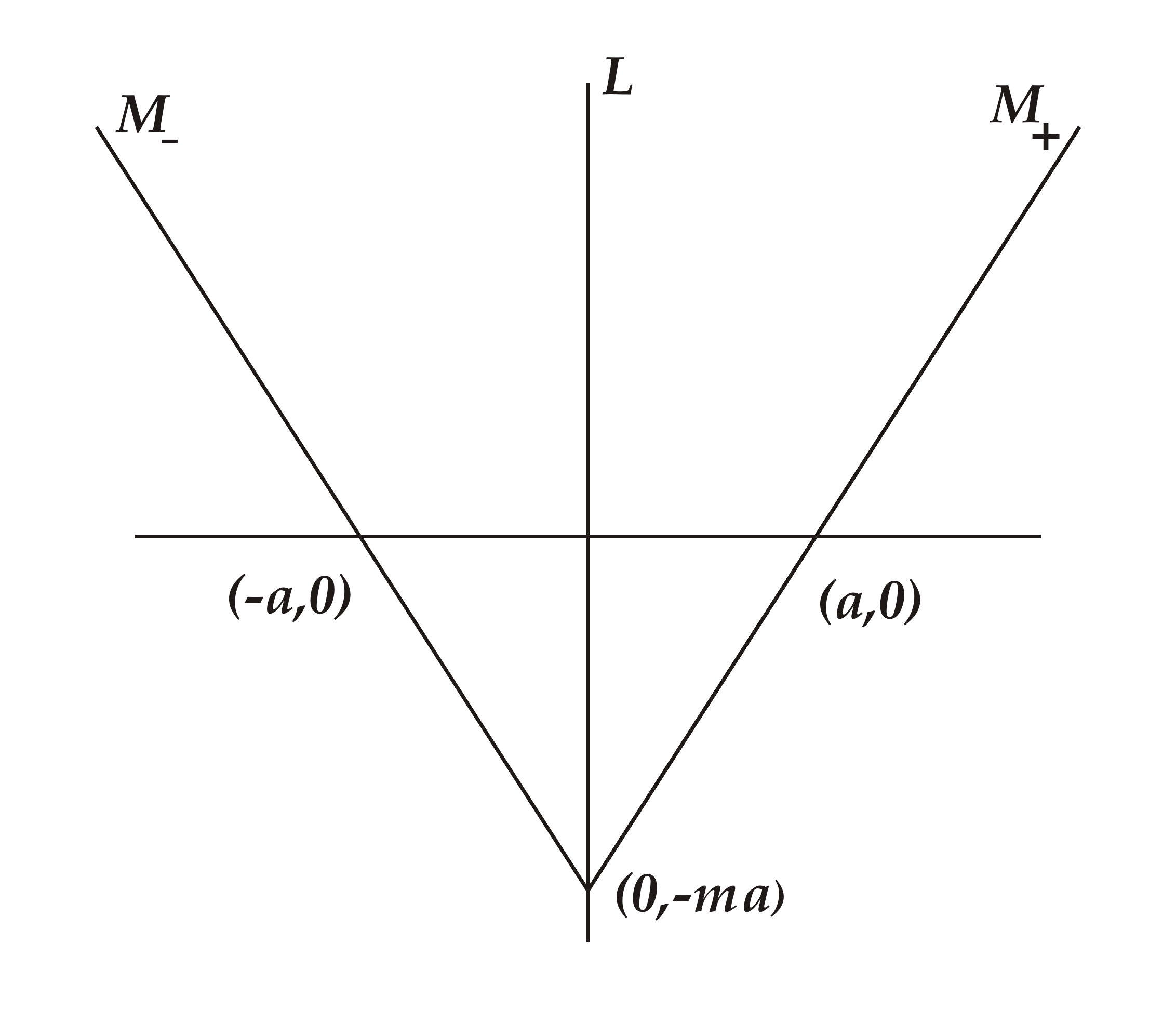}\\
  \caption{If $M_+$ is not vertical, then $K_2$ is contained in the triangle $T$} \label{fig:supportinglines}
\end{figure}
Then $\D(T)=\max\{2a,\delta\}\geq\D(K_2)=\D_B(K)$, where $\delta=d((a,0),(0,-ma))=a\sqrt{1+m^2}$.
We will distinguish two possibilities.
If $2a>\delta$, then $2a=\D(T)\geq\D_B(K)$ (and so $\D_B(K)=2a$). 
In this case, it is straightforward checking that the area of $B_+\cap B_-$ equals
$\displaystyle{\D_B(K)^2\,\frac{4\pi-3\sqrt{3}}{6}}$,
and so
\begin{equation}
\label{eq:2a=D}
\frac{\A(K)}{\D_B(K)^2}\leq\frac{\A(B_+\cap B_-)}{\D_B(K)^2}=\frac{4\pi-3\sqrt{3}}{6}.
\end{equation}
On the other hand, if $2a\leq \delta$, then $\delta=\D(T)\geq\D_B(K)$, which implies that $m\geq a^{-1}\sqrt{\D_B(K)^2-a^2}$.
Let us estimate the isodiametric quotient of $K$ in this case.

Let $R(a,m)$ be the planar region contained between $M_+$, $M_-$, $B_+$ and $B_-$, with the dependance
on $a$ and $m$ explained above.
Since $K\subseteq R(a,m)$, then $\A(K)\leq \A(R(a,m))$.
Moreover, let $R(a,+\infty)$ be the planar region contained between $B_+$, $B_-$
and the vertical lines passing through $(\pm a,0)$.
Let us check that
$\A(R(a,m))<\A(R(a,+\infty))$, for every $m\geq a^{-1}\sqrt{\D_B(K)^2-a^2}$ (and $2a\leq\delta$).
Due to the symmetry of these regions, we can focus on the corresponding areas contained in $\{(x_1,x_2)\in\R^2:x_1\geq0\}$. 
The only region $R_1$ (resp., $R_2$) contained in $R(a,m)$ (resp., $R(a,+\infty)$) which is not in $R(a,+\infty)$
(resp., $R(a,m)$) is the one contained
between $M_+$, $(a,0)+L$, $B_-$, and $\{(x_1,x_2)\in \R^2:x_2\geq 0\}$ (resp., $\{(x_1,x_2)\in \R^2:x_2\leq 0\}$).
It can be checked that the condition $m\geq a^{-1}\sqrt{\D_B(K)^2-a^2}$ implies that the rotation centered at $(a,0)$ of angle $\pi$
maps strictly $R_1$ onto $R_2$, and so $\A(R(a,m))<\A(R(a,+\infty))$. 
Note also that the construction of $R(a,+\infty)$ implies that $\D_B(R(a,+\infty))=\D_B(K)$
(the bisection of $R(a,+\infty)$ given by the subsets $R_+=R(a,+\infty)\cap H^+$ and $R_-=R(a,+\infty)\cap H^-$
satisfies $\D(R_+)=\D(R_-)=\D_B(K)$, see \cite{MPS}).

Let us now compute the maximum value for $\displaystyle{\A(R(a,+\infty))/\D_B(K)^2}$, when $a>0$.
It is straightforward checking that
\begin{align*}
\A(a):&=\A(R(a,+\infty)) =4\int_0^a\sqrt{\D_B(K)^2-(x+a)^2}\,dx\\
&=2\Bigg(2a\sqrt{\D_B(K)^2-4a^2}-a\sqrt{\D_B(K)^2-a^2}+
\D_B(K)^2\arctan\Bigg(\frac{2a}{\sqrt{\D_B(K)^2-4a^2}}\Bigg)\\
&-\D_B(K)^2\arctan\Bigg(\frac{a}{\sqrt{\D_B(K)^2-a^2}}\Bigg)\Bigg).
\end{align*}
For simplicity, call $\displaystyle{b=a/\D_B(K)}$
(which corresponds to a normalization for having $\D_B(K)$ equal to 1 by an appropriate dilation).
Then, well-known properties of dilations gives
\begin{align*}
\A(b)&=
2\left(2b\sqrt{1-4b^2}-b\sqrt{1-b^2}+\arctan\left(\frac{2b}{\sqrt{1-4b^2}}\right)-\arctan\left(\frac{b}{\sqrt{1-b^2}}\right)\right),
\end{align*}
which attains its maximum value (as a function on $b$) only at $b=1/\sqrt{5}$, 
and so, for any $b>0$,
$$\displaystyle{\A(b)\leq\A(1/\sqrt{5})=2\,\arctan\bigg(\frac{3}{4}\bigg)}.$$
Thus
\begin{equation}
\label{eq:2}
\frac{\A(K)}{\D_B(K)^2} \leq \frac{\A(R(a,+\infty))}{\D_B(K)^2}\leq2\,\arctan\bigg(\frac{3}{4}\bigg),
\end{equation}
which gives a bound greater than the one obtained in \eqref{eq:2a=D}, yielding the desired inequality~\eqref{eq:miniBisect}.
The proof finishes by noting that if $M_+$ is vertical, then $K\subset R(a,+\infty)$, which gives the same inequality~\eqref{eq:2}.
Equality above only holds when $A(b)$ is maximum, namely for $R(1/\sqrt{5},+\infty)$, which coincides with $Q$ by definition.
\end{proof}

\begin{remark}
\label{re:futuro}
Let $(Q_1,Q_2)$ be a minimizing bisection of $Q$ determined by a curve $l:[-1,1]\to Q$,
and let $q_+=(0,2/\sqrt{5})\in\partial Q$, $q_-=(0,-2/\sqrt{5})\in\partial Q$,
$p_+=(1/\sqrt{5},0)\in\partial Q$, $p_-=(-1/\sqrt{5},0)\in\partial Q$.
Recall that $\D_B(Q)=1=d(p_-,q_+)$.
Since $d(q_+,q_-)>\D_B(Q)$,
each of these two points must belong to a different subset of the bisection.
We can assume that $q_+\in Q_1$, $q_-\in Q_2$.
Then, it necessarily follows that
$Q_1\subseteq B(q_+,D_B(Q))$ and $Q_2\subseteq B(q_-,D_B(Q))$,
where $B(x,r)$ denotes the closed ball centered at $x$ and of radius $r$.
Those inclusions immediately imply that $\{l(-1),l(1)\}=\{p_-,p_+\}$, and
also that $l([-1,1])$ is contained in the intersection of those balls, that is,
\[
l([-1,1])\subset\left\{(x_1,x_2)\in\mathbb R^2:x_1^2+\left(x_2\pm\frac{2}{\sqrt{5}}\right)^2\leq 1\right\}.
\]
\end{remark}

\begin{remark}
The reader will realize that the line segment $[(-a,0),(a,0)]$
does not give a minimizing bisection of $R(a,m)$ in the proof of Theorem~\ref{th:isodiametricIneq} \emph{for some values of the parameters} $a,m$.
Indeed, in every step of the proof of Theorem \ref{th:isodiametricIneq},
we replace the set by another one with greater (or equal)
area. This process starts with $K$ and ends with $Q=R(1/\sqrt{5},+\infty)$,
and the corresponding horizontal line segment provides a minimizing bisection \emph{for these two sets}, whereas in the middle of the
process, that line segment \emph{does not give} necessarily a minimizing bisection of $R(a,m)$ in general.
For instance, for $K=R(a,\sqrt{3})$, with $\D_B(K)>2a$, the bisection determined by the line segment $[(-a,0),(a,0)]$
is not minimizing, since it can be improved by a different line segment (placed slightly above).
\end{remark}

\section{The isominwidth inequality}\label{sec:IsominwidthIneq}
In this section we will consider the problem analogous to the one studied in Section~\ref{sec:IsodiamIneq},
but for the width functional.
We will start by proving that $\w_B(K)=\w(K)/2$, for any $K\in\K^2$,
by using the following celebrated result by Bang on Tarski's plank problem \cite{Ba}:
for $K\in\K^2$, and $p,q\in\mathrm{bd}(K)$, let $(K_1,K_2)$ be the bisection
given by the line segment $[p,q]$. Then
\begin{equation}\label{eq:Bang}
\w(K_1)+\w(K_2)\geq\w(K).
\end{equation}

\begin{lemma}\label{lem:WidthBisect=Half}
Let $K\in\K^2$. Then, $\w_B(K)=\w(K)/2$.
\end{lemma}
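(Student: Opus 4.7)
The plan is to prove the equality by two matching inequalities $\w_B(K)\le\w(K)/2$ and $\w_B(K)\ge\w(K)/2$. Both directions turn out to be short once we exploit the tools already available: a direct construction for the upper bound, and Bang's inequality~\eqref{eq:Bang} combined with Lemma~\ref{lem:StraightLine} for the lower bound.

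For the upper bound I would exhibit an explicit bisection whose subsets both have width at most $\w(K)/2$. Choose a unit vector $u\in\R^2$ such that the two parallel lines $\ell_1,\ell_2$ orthogonal to $u$ supporting $K$ are at distance $\w(K)$, realizing the minimum width. Let $\ell$ be the line parallel to $\ell_1$ and $\ell_2$ equidistant from both. Since $K$ is convex and has non-empty interior and $\ell$ meets $\inter(K)$, the intersection $K\cap\ell$ is a line segment with both endpoints in $\bd(K)$, so the pair $(K_1,K_2)$ obtained by cutting $K$ along $\ell$ is an element of $\mathcal{B}(K)$. By construction, $K_i$ is contained in a strip of width $\w(K)/2$ orthogonal to $u$, hence $\w(K_i)\le\w(K)/2$ for $i=1,2$. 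This yields $\w_B(K)\le\max\{\w(K_1),\w(K_2)\}\le\w(K)/2$.

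For the lower bound, let $(K_1,K_2)\in\mathcal{B}(K)$ be arbitrary and set $\rho=\max\{\w(K_1),\w(K_2)\}$. By Lemma~\ref{lem:StraightLine} there is a bisection $(K_1',K_2')$ of $K$ determined by a line segment $[p,q]$ with $\max\{\w(K_1'),\w(K_2')\}\le\rho$. Applying Bang's inequality~\eqref{eq:Bang} to this line-segment bisection gives $\w(K_1')+\w(K_2')\ge\w(K)$, so $\max\{\w(K_1'),\w(K_2')\}\ge\w(K)/2$, whence $\rho\ge\w(K)/2$. Taking infimum over all bisections yields $\w_B(K)\ge\w(K)/2$, and Lemma~\ref{lem:minimum} guarantees that this infimum is attained.

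There is no genuine obstacle here; the only points that require care are that the midline construction in the first step produces a bona fide element of $\mathcal{B}(K)$ (a chord, not just an arbitrary curve), and that Bang's inequality as stated in~\eqref{eq:Bang} is only for bisections by line segments, which is exactly why Lemma~\ref{lem:StraightLine} is needed to reduce the general bisection case to that setting.
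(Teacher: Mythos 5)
Your proof is correct and follows essentially the same route as the paper: the midline between the two width-realizing supporting lines gives the upper bound, and Lemma~\ref{lem:StraightLine} together with Bang's inequality~\eqref{eq:Bang} gives the lower bound. The only cosmetic difference is that the paper invokes Lemmas~\ref{lem:minimum} and~\ref{lem:EqualValueBisection} to work with an attained, equal-width minimizing segment bisection, whereas you argue directly with the infimum, which is equally valid.
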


\begin{proof}
Let $L_1$, $L_2$ be two parallel supporting lines of $K$ such that $d(L_1,L_2)=\w(K)$,
and let $u\in\mathbb S^1$ be an orthogonal vector to these lines.
Consider $p,q\in\mathrm{bd}(K)$ such that $[p,q]=K\cap L$, where $L$ is a line parallel to $L_i$ and lies at distance $\w(K)/2$
from each line $L_i$, $i=1,2$. Moreover, let $(K_1,K_2)$ be the bisection determined by the line segment $[p,q]$.
Note that $L$ and $L_i$ are supporting lines of $K_i$, for $i=1,2$, and so $\w(K_i)\leq\w(K)/2$.
Thus, $\max\{\w(K_1),\w(K_2)\}\leq\w(K)/2$ and hence $\w_B(K)\leq\w(K)/2$.
On the other hand, in view of Lemmas~\ref{lem:StraightLine},~\ref{lem:minimum} and~\ref{lem:EqualValueBisection},
let $(\wt{K_1},\wt{K_2})$ be a bisection of $K$ where $\w_B(K)$ is attained, given by a line segment
and satisfying $\w_B(K)=\w(\wt{K_1})=\w(\wt{K_2})$.
Then, \eqref{eq:Bang} implies that
$\w(K)\leq\w(\wt{K_1})+\w(\wt{K_2})=2\,\w_B(K)$, and so $\w_B(K)\geq\w(K)/2$, yielding the desired equality.
\end{proof}

Now we are able to prove immediately the main result of this section,
which is Theorem~\ref{th:isominwidthIneq}, providing a sharp upper bound for $\w_B$.

\begin{proof}[Proof of Theorem \ref{th:isominwidthIneq}]
By Lemma \ref{lem:WidthBisect=Half} and Pal's inequality \eqref{eq:palIneq} we directly have that
\[
\frac{\A(K)}{\w_B(K)^2}=4\,\frac{\A(K)}{\w(K)^2}\geq\frac{4}{\sqrt{3}}.
\]
Moreover, in order to have equality, we must have equality in \eqref{eq:palIneq},
hence implying that $K$ is an equilateral triangle $\mathcal{T}$.
Additionally, 
note that $\w(\mathcal{T})$ coincides with any of the three heights of $\mathcal{T}$,
and the width corresponding to any other different direction will be strictly greater.
Since $\w_B(\mathcal{T})=\w_B(\mathcal{T})/2$ by Lemma~\ref{lem:WidthBisect=Half},
this implies that any minimizing bisection $(\mathcal{T}_1, \mathcal{T}_2)$ of $\mathcal{T}$
must satisfy that $\mathcal{T}_1\cap\mathcal{T}_2$ is a line segment whose endpoints are
the midpoints of two edges of $\mathcal{T}$.
\end{proof}

\section{The Behrend-Bisecting position and the reverse isodiametric inequality}\label{sec:RevIsodiamIneq}

As commented in the Introduction, we will now focus on a reverse isodiametric-type inequality for $D_B$.
The following definitions and results arise mainly from some ideas in~\cite{Beh}.
For every $K\in\K^2$, let $$V_K:=\{u\in\mathbb S^1:\exists\,x\in K\,\text{ such that }x+\D(K)[0,u]\subset K\}$$
be the set of \emph{diametrical directions} of $K$ (that is, the directions for which $\D(K)$ is attained).
Moreover, we will say that $u\in\mathbb{S}^1$ is a \emph{bisector of $K$}
if $u$ is the direction of a line segment providing a minimizing bisection $(K_1,K_2)$ of $K$ with $\D(K_1)=\D(K_2)$.
We will denote by $B_K$ the set of bisectors of $K$.
Note that $B_K$ contains the directions which determine suitable minimizing bisections by line segments for $\D_B$.

The next result establishes that the supremum in the definition of the Behrend-bisecting position \eqref{def:BBP} is
actually a maximum.

\begin{lemma}\label{lem:isodiamMaximum}
Let $K\in\K^2$. Then, there exists $\phi\in End(\mathbb R^{2})$ such that $\phi(K)$ is in
Behrend-bisecting position.
\end{lemma}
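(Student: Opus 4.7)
The plan is to show that the supremum
\[
\gamma:=\sup_{\phi\in End(\mathbb R^2)}\frac{\A(\phi(K))}{\D_B(\phi(K))^2}
\]
is attained, by a Blaschke-compactness argument in the spirit of Behrend's original treatment of the reverse isodiametric inequality. First I would observe that, by the invariances recorded in Remark~\ref{re:inv}, the quotient depends on $\phi$ only through $\phi(K)$ modulo translation, dilation and rotation. Hence I may restrict the supremum to linear maps $\phi(x)=Ax$ with $A\in\mathrm{GL}(2,\R)$, normalized so that $\D_B(\phi(K))=1$. Note that $\gamma>0$ (evaluate at $\phi=\mathrm{id}$) and $\gamma$ is finite by Theorem~\ref{th:isodiametricIneq}; hence I may pick a maximizing sequence $\phi_n(x)=A_nx$ with $\D_B(\phi_n(K))=1$ and $\A(\phi_n(K))\to\gamma$.

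Next I would argue that $\{\phi_n(K)\}$ lies in a common Euclidean disk by combining the normalization with the elementary bound $\D(K')\leq 2\,\D_B(K')$, valid for every $K'\in\K^2$. This estimate follows from the triangle inequality: if $p,q\in K'$ realize $\D(K')$ and $(K_1',K_2')$ is a line-segment bisection of $K'$ (which exists thanks to Lemma~\ref{lem:StraightLine}), the bisecting segment must cross $[p,q]$ at some point $r$, and therefore $d(p,q)\leq d(p,r)+d(r,q)\leq\D(K_1')+\D(K_2')\leq 2\max\{\D(K_1'),\D(K_2')\}$. Consequently $\D(\phi_n(K))\leq 2$ and, after suitable translations, the sequence $\{\phi_n(K)\}$ is absolutely bounded. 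Blaschke Selection Theorem then provides a Hausdorff-convergent subsequence $\phi_n(K)\to K_0\in\K^2$, and the continuity of $\A$ and of $\D_B$ with respect to the Hausdorff metric yields $\A(K_0)=\gamma>0$ and $\D_B(K_0)=1$.

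Finally, I would check that $K_0=\phi(K)$ for some $\phi\in End(\mathbb R^2)$. The inclusion of $\phi_n(K)$ in a fixed disk keeps $\|A_n\|$ bounded, while $|\det A_n|\,\A(K)=\A(\phi_n(K))\to\gamma>0$ keeps $|\det A_n|$, and hence $\|A_n^{-1}\|$, uniformly bounded. Passing to a further subsequence I may assume $A_n\to A\in\mathrm{GL}(2,\R)$, and by continuity $K_0=AK=:\phi(K)$. By construction $\A(\phi(K))/\D_B(\phi(K))^2=\gamma$, which yields the desired conclusion.

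The main obstacle I foresee is the continuity of $\D_B$ with respect to the Hausdorff metric. Upper semicontinuity is fairly direct: given minimizing line-segment bisections of $\phi_n(K)$ (which exist by Lemmas~\ref{lem:StraightLine} and~\ref{lem:minimum}), one extracts a convergent subsequence of the bisecting segments and passes to the limit exactly as in the proof of Lemma~\ref{lem:minimum}. Lower semicontinuity is subtler, and will require perturbing a near-optimal line-segment bisection of $K_0$ so that its endpoints track nearby boundary points of $\phi_n(K)$; this should follow from the Hausdorff convergence of the boundaries, but it is the step that will demand the most care.
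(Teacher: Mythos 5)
Your argument is essentially the paper's: a maximizing sequence, the bound $\D(K')\leq 2\,\D_B(K')$ to get boundedness of the bodies, Blaschke selection for the bodies, compactness of the (normalized) matrices to identify the limit as an affine image $K_0=AK$, and passing minimizing segment-bisections to the limit. The only substantive difference is the normalization (you fix $\D_B=1$; the paper fixes $|\det\phi|=1$ and the area), which is cosmetic.

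The one point to address is the step you yourself flag as unresolved: you invoke \emph{continuity} of $\D_B$ in the Hausdorff metric to get $\D_B(K_0)=1$, and you admit that the direction obtained by perturbing a near-optimal bisection of $K_0$ to the nearby bodies $\phi_n(K)$ is delicate and not carried out. As written this leaves a gap, but it is a gap you do not need to fill: that direction is superfluous once you know $K_0=AK$ with $A$ invertible. Indeed, $K_0$ is then itself an admissible affine image of $K$, so by the very definition of $\gamma$ as a supremum, $\A(K_0)/\D_B(K_0)^2\leq\gamma$; since continuity of the area gives $\A(K_0)=\gamma>0$, this forces $\D_B(K_0)\geq 1$. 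Combining this with the direction you do sketch (extract a limit of the minimizing bisecting segments of $\phi_n(K)$, exactly as in Lemma~\ref{lem:minimum}, to exhibit a segment bisection of $K_0$ whose pieces have diameters tending to $1$, whence $\D_B(K_0)\leq 1$) yields $\D_B(K_0)=1$ and closes the proof. This extremality trick is precisely how the paper's proof of Lemma~\ref{lem:isodiamMaximum} avoids any lower-semicontinuity statement for $\D_B$: it only verifies that the limit body admits a bisection realizing the limiting value and then rules out strict inequality by the definition of the supremum. Two minor points to tidy up: in the proof of $\D(K')\leq 2\,\D_B(K')$ you should also treat the case where both endpoints of a diametral segment lie in the same piece (then that piece already has diameter $\D(K')$ and the bound is immediate); and the boundedness of $\Vert A_n\Vert$ from the inclusion $\phi_n(K)$ in a fixed disk uses that $K$ contains some ball, which holds since $K\in\K^2$ has non-empty interior but should be said explicitly, as the paper does.
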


\begin{proof}
We can assume, after a suitable translation of $K$,
that $r\,\mathbb B^2_2\subseteq K$ for some $r>0$,
where $\mathbb B^2_2$ is the \emph{planar Euclidean unit ball centered at the origin}.
Call
\[
\rho:=\sup_{\phi\in End(\mathbb R^{2})}\frac{\A(\phi(K))}{\D_B(\phi(K))^2}.
\]
Since $\A$ and $\D_B^2$ are homogeneous functionals of degree two, we can suppose without loss of generality that
$|\det(\phi)|=1$, $\A(K)=1$, and
\begin{equation}
\label{eq:rho}
\inf_{\substack{\phi\in End(\mathbb R^{2}) \\[0.5mm] |\det(\phi)|=1}}\D_B(\phi(K))=\frac{1}{\sqrt{\rho}}.
\end{equation}
Consider a sequence $\{\phi_i\}_{i\in\mathbb N}\subset End(\mathbb R^{2})$
such that $|det(\phi_i)|=1$, for $i\in\mathbb N$, and $$\D_B(\phi_i(K))\rightarrow\frac{1}{\sqrt{\rho}} \text{ when } i\rightarrow\infty.$$
We can additionally assume that all the endomorphisms $\phi_i$ are \emph{linear},
since $\D_B$ is invariant under translations, and that there exists $C>0$ such that $\D_B(\phi_i(K))\leq C$ for every $i\in\mathbb N$.
Since $(0,0)\in\phi_i(K)$ and $\D(\phi_i(K))\leq 2\,\D_B(\phi_i(K))\leq 2\,C$, for all $i\in\mathbb N$,
then $\{\phi_i(K)\}_{i\in\mathbb N}$ is an absolutely bounded sequence
(since $(0,0)\in\phi_i(K)$, we actually have that $\phi_i(K)\subseteq 2\,C\,\mathbb B^2_2$).
Hence the Blaschke Selection Theorem implies that there exists a subsequence
(which will be denoted as the original one) such that $\phi_i(K)\rightarrow K_0$
when $i\rightarrow\infty$, for some $K_0\in\K^2$. Let us furthermore observe that
if $\phi_i=(a^i_{jk})_{1\leq j,k\leq 2}\in\mathbb R^{2\times 2}$,
since $r\,\mathbb B^2_2\subseteq K$ and $\phi_i(K)\subseteq 2\,C\,\mathbb B^2_2$, then
it follows that $|a^i_{jk}|\leq 2\,C/r$ for $1\leq j,k\leq 2$ and $i\in\mathbb N$.
Thus $\{\phi_i\}_{i\in\mathbb N}$ is bounded
with respect to the so-called induced norm (or operator norm) $\Vert\cdot\Vert_{op}$ for linear endomorphisms,
and so there exists a subsequence (which will be denoted again as the original one)
such that $\phi_i\rightarrow\phi_0$ when $i\rightarrow\infty$, for some $\phi_0\in End(\mathbb R^{2})$.
Moreover, $|\det(\phi_0)|=1$, with
$\phi_i(K)\rightarrow K_0=\phi_0(K)$ when $i\rightarrow\infty$.
We will now prove that $\D_B(\phi_0(K))=1/\sqrt{\rho}$, which will imply that $\phi_0(K)$ is in Behrend-bisecting position, as desired.

First of all, since each $\phi_i$ is linear and regular, we have that $\phi_i$ is bijective.
Fix $u_i\in B_{\phi_i(K)}$, and let $x_i\in K$, $\mu_i>0$ be such that
the line segment $\phi_i(x_i)+\mu_i[0,u_i]\subset\phi_i(K)$ provides a minimizing bisection of $\phi_i(K)$, for each $i\in\mathbb N$.
Let $\phi(K^i_1),\, \phi(K^i_2)$ be the subsets of that bisection, satisfying
$\D_B(\phi_i(K))=\D(\phi_i(K^i_1))=\D(\phi_i(K^i_2))$ for every $i\in\mathbb N$.
Since $\phi_i$ is a bijection, we will have that $(K^i_1,K^i_2)$ is a bisection of $K$
and moreover, we can consider $y_i\in K$ such that $\phi_i(y_i)=\phi_i(x_i)+\mu_i\,u_i$, for every $i\in\mathbb N$.
Since $\{[x_i,y_i]\}_{i\in\mathbb N}\subset K$ is again absolutely bounded, we can suppose
that $[x_i,y_i]\rightarrow[x_0,y_0]$ when $i\rightarrow\infty$,
for some $x_0$, $y_0\in K$.
Let $(K^0_1, K^0_2)$ be the bisection of $K$ given by $[x_0,y_0]$,
and let us see that
$\phi_i(K^i_j)\rightarrow\phi_0(K^0_j)$ when $i\rightarrow\infty$, for $j=1,2$.
By the subadditivity of Hausdorff distance $d_{\mathcal H}$, it is clear that
\begin{equation}
\label{triangular}
d_{\mathcal H}(\phi_i(K_j^i),\phi_0(K^0_j))\leq d_{\mathcal H}(\phi_i(K_j^i),\phi_0(K^i_j))+d_{\mathcal H}(\phi_0(K_j^i),\phi_0(K^0_j)).
\end{equation}
Note that, since $K$ is compact, then $K\subset\delta\,\mathbb B^2_2$, for some $\delta>0$,
and thus $K_j^i\subset \delta\,\mathbb B^2_2$ for every $i\in\mathbb N$ and $j=1,2$.
Then, for $x\in K_j^i$,
\[
\Vert\phi_i(x)-\phi_0(x)\Vert_2 =\, \Vert(\phi_i-\phi_0)(x)\Vert_2\leq\,\Vert\phi_i-\phi_0\Vert_{op}\,\Vert x\Vert_2 \leq \delta\,\Vert\phi_i-\phi_0\Vert_{op},
\]
which implies that $d_{\mathcal H}(\phi_i(K_j^i),\phi_0(K^i_j))\to0$ when $i\to\infty$, for $j=1,2$.
On the other hand, we claim that
$d_{\mathcal H}(\phi_0(K_j^i),\phi_0(K^0_j))\leq\Vert \phi_0\Vert_{op}\,d_{\mathcal H}(K_j^i,K_j^0)$.
Consider $\varepsilon_j^i:=d_{\mathcal H}(K_j^i,K_j^0)$,
which tends to $0$ when $i\to\infty$, for $j=1,2$.
It follows that
$K_j^i\subseteq K_j^0+\varepsilon_j^i\,\mathbb B^2_2$,
and by applying $\phi_0$ we get
$\phi_0(K_j^i)\subseteq \phi_0(K_j^0)+\varepsilon_j^i\,\Vert \phi_0\Vert_{op}\,\mathbb{B}^2_2$.
Analogously,
we will get
$\phi_0(K_j^0)\subseteq \phi_0(K_j^i)+\varepsilon_j^i\,\Vert \phi_0\Vert_{op}\,\mathbb{B}^2_2$.
These two inclusions yield the claim, by the definition of $d_{\mathcal H}$, which implies that
$d_{\mathcal H}(\phi_0(K_j^i),\phi_0(K^0_j))\to0$ when $i\to\infty$, for $j=1,2$.
Taking into account \eqref{triangular}, we conclude that
$\phi_i(K^i_j)\rightarrow\phi_0(K^0_j)$ when $i\rightarrow\infty$, for $j=1,2$.
%
Therefore $\D(\phi_0(K^0_j))=1/\sqrt{\rho}$, for $j=1,2$,
and so $\D_B(K_0)\leq 1/\sqrt{\rho}$. But if this inequality is strict, we get a contradiction with \eqref{eq:rho},
so equality must hold, which finishes the proof.
\end{proof}

The proof of the following characterization of the Behrend position for a convex body can be found in~\cite{GMS}
(equivalence (ii) was already proved by Behrend~\cite{Beh}).

\begin{proposition}\label{prop:behrendPos}
Let $K\in\K^2$. The following statements are equivalent.
\begin{enumerate}
\item[(i)] $K$ is in Behrend position.
\item[(ii)] For every $u\in\mathbb S^1$, there exists $v\in V_K$ such that $|u^Tv|\geq 1/\sqrt{2}$.
\item[(ii')] For every $u\in\mathbb S^1$, there exists $v\in V_K$ such that $|u^Tv|\leq 1/\sqrt{2}$.
\item[(iii)] There exist $u_i\in V_K$ and $\lambda_i\geq 0$, $i=1,2,3$, such that $\displaystyle{\sum_{i=1}^3\lambda_i(u_i\,u_i^T)=\mathrm{I}_2}$,
where $\mathrm{I}_2$ denotes the identity matrix of degree two.
\end{enumerate}
\end{proposition}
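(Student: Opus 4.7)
The plan is to close the cycle (iii) $\Rightarrow$ (i) $\Rightarrow$ (iii) and then deduce (iii) $\iff$ (ii) $\iff$ (ii') by elementary convex-geometric manipulations. Throughout I will identify the space of symmetric $2\times 2$ matrices with $\R^3$ via the inner product $\iprod{A}{B}=\mathrm{tr}(A^T B)$, so that the map $v\mapsto vv^T$ sends $\mathbb S^1$ to a closed curve inside the affine hyperplane $\{X:\mathrm{tr}(X)=1\}$.

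For (iii) $\Rightarrow$ (i), fix any linear $\phi\in End(\R^2)$ (translations are harmless since $\A$ and $\D$ are translation-invariant). For each $u_i\in V_K$ there exist $x_i,y_i\in K$ with $y_i-x_i=\D(K)\,u_i$, hence $\D(\phi(K))^2\geq\D(K)^2\,u_i^T\phi^T\phi\,u_i$. Weighting by $\lambda_i$, summing, and using (iii) together with $\sum_i\lambda_i=\mathrm{tr}(I_2)=2$, one obtains
\[
2\,\D(\phi(K))^2\geq\D(K)^2\sum_{i=1}^3\lambda_i\,u_i^T\phi^T\phi\,u_i=\D(K)^2\,\mathrm{tr}(\phi^T\phi).
\]
The AM--GM inequality on the singular values of $\phi$ yields $\mathrm{tr}(\phi^T\phi)\geq 2|\det\phi|$, hence $\D(\phi(K))^2\geq|\det\phi|\,\D(K)^2$, equivalently $\A(\phi(K))/\D(\phi(K))^2\leq\A(K)/\D(K)^2$, which proves (i).

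For (i) $\Rightarrow$ (iii) I would run a first-order variational argument. Given any symmetric $S$, set $\phi_\epsilon=I_2+\epsilon S$. Then $|\det\phi_\epsilon|=1+\epsilon\,\mathrm{tr}(S)+O(\epsilon^2)$, while $\D(\phi_\epsilon(K))^2$, seen as the maximum of the smooth family $(x,y)\mapsto\|\phi_\epsilon(x-y)\|^2$ on the compact set $K\times K$, admits the right-derivative $2\D(K)^2\max_{v\in V_K}v^T S v$ at $\epsilon=0$ by a standard envelope computation (the $\epsilon=0$ maximizers are exactly the diametrical pairs). The optimality of $K$ in Behrend position then forces $\mathrm{tr}(S)\leq 2\max_{v\in V_K}v^T Sv$ for every symmetric $S$, that is, $\iprod{S}{\tfrac12 I_2}\leq\max_{v\in V_K}\iprod{S}{vv^T}$. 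By the separating hyperplane theorem in the symmetric matrix space, this yields $\tfrac12 I_2\in\conv\{vv^T:v\in V_K\}$, and Carath\'eodory's theorem applied inside the $2$-dimensional slice $\{X:\mathrm{tr}(X)=1\}$ rewrites this as (iii) with at most three directions.

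For the remaining equivalences: (iii) $\Rightarrow$ (ii) follows from $1=u^T I_2 u=\sum\lambda_i(u^Tu_i)^2$ combined with $\sum\lambda_i=2$, forcing some $(u^Tu_i)^2\geq 1/2$. The equivalence (ii) $\iff$ (ii') comes from the identity $(u^Tv)^2+(u^{\perp T}v)^2=1$ upon swapping $u$ with $u^\perp$. For (ii) $\Rightarrow$ (iii) I argue the contrapositive: if $\tfrac12 I_2$ lies outside $\conv\{vv^T:v\in V_K\}$, separation produces, after subtracting a multiple of $I_2$ to enforce zero trace, a traceless symmetric $M=\mu(2ww^T-I_2)$ with $v^T M v<0$ for all $v\in V_K$, i.e.\ $(w^T v)^2<1/2$ for every $v\in V_K$, contradicting (ii) at $u=w$. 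The main technical obstacle I foresee is the rigorous justification of the right-derivative formula for $\D(\phi_\epsilon(K))^2$: one must exploit the upper semicontinuity of the diametrical-pair correspondence $\epsilon\mapsto\mathrm{argmax}_{(x,y)\in K\times K}\|\phi_\epsilon(x-y)\|^2$ at $\epsilon=0$ so that the directional derivative is indeed taken over $V_K$ rather than over a larger limit set.
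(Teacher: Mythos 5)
Your proposal is correct, and it is worth noting that the paper does not actually prove this proposition: it is quoted from the literature (the equivalence with (ii) goes back to Behrend, the full statement to Gonz\'alez Merino--Schymura), so you have supplied a self-contained argument where the paper only cites. Your route also differs in flavour from the one the cited source (and the paper's own Lemma~\ref{lem:behrendBisectPos}, which is modelled on it) uses: there, the variation is the one-parameter compression $A_\varepsilon=\mathrm{diag}(1,1-\varepsilon)$ along a fixed direction, which extracts the angular condition (ii) directly from optimality, and the decomposition (iii) is then a separate step. You instead perturb by an arbitrary symmetric $S$, obtain the linear inequality $\mathrm{tr}(S)\leq 2\max_{v\in V_K}v^TSv$, and convert it into the John-type identity (iii) by separation plus Carath\'eodory in the trace-one affine plane; the converse (iii)$\Rightarrow$(i) via $\mathrm{tr}(\phi^T\phi)\geq 2|\det\phi|$ is a clean global argument that avoids any variational reasoning. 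What your approach buys is generality and symmetry of the two implications (it would extend verbatim to $\mathbb{R}^n$ with $\mathrm{tr}(\phi^T\phi)\geq n|\det\phi|^{2/n}$ and Carath\'eodory giving $\tfrac{n(n+1)}{2}+1$ directions); what the compression approach buys is that it isolates exactly the one-dimensional deformation needed for the bisecting variants studied in the paper.

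Three small points you should nail down when writing this up: (a) the envelope step you flag is handled exactly as you suggest--pick maximizing pairs $(x_{\varepsilon},y_{\varepsilon})$ of $\|\phi_\varepsilon(x-y)\|^2$, pass to a convergent subsequence as $\varepsilon\downarrow 0$, and use that the limit is a diametral pair; only the one-sided upper estimate on $\D(\phi_\varepsilon(K))^2$ is needed, so no full differentiability of the max is required; (b) for the strict separation in (i)$\Rightarrow$(iii) and in the contrapositive of (ii)$\Rightarrow$(iii) you need $\{vv^T:v\in V_K\}$ compact, which follows since $V_K$ is closed (a limit of diametral directions is diametral, by compactness of $K$); (c) in (iii)$\Rightarrow$(i) one should dispose of degenerate affine maps $\phi$, for which $\A(\phi(K))=0$, so they cannot increase the quotient. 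None of these is a gap, only finishing details.
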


\begin{remark}
Condition (ii) (resp.,~(ii')) in Proposition~\ref{prop:behrendPos} means that for any fixed $u\in\mathbb S^1$,
there exists a diametrical direction $v\in V_K$ contained in the double cone (resp.,~outside the double cone) with apex at $0$
and vectors making an angle of at most $\pi/4$ radians with respect to $\pm u$.
Condition (iii) states that the identity matrix of degree two admits a decomposition
as a non-negative linear combination of matrices of rank one,
by means of three certain diametrical directions of $K$
(cf.~\cite{GMS} and the references therein for further details and connections with other results).
\end{remark}

Next result establishes the analogous in Proposition \ref{prop:behrendPos} to (i) implies (ii) or (iii).
The proof is inspired in the ideas from~\cite[Lemma 3.2]{GMS}.

\begin{lemma}\label{lem:behrendBisectPos}
Let $K\in\K^2$ be in Behrend-bisecting position.
For every $u\in\mathbb S^1$ and every $w\in B_K$, being $(K_1^w, K_2^w)$ the corresponding minimizing bisection of $K$,
we have that
\begin{enumerate}
\item[(i)] there exists $v\in V_{K_1^w}\cup V_{K_2^w}$ such that $|u^Tv|\geq 1/\sqrt{2}$, and
\item[(ii)] there exists $v\in V_{K_1^w}\cup V_{K_2^w}$ such that $|u^Tv|\leq 1/\sqrt{2}$.
\end{enumerate}
\end{lemma}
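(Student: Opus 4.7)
The plan is to argue by contradiction, following the strategy of \cite[Lemma~3.2]{GMS}: in each case I would construct a volume-preserving linear endomorphism $\phi_t\in End(\R^2)$, $t>0$, that strictly shrinks the diameters of both $K_1^w$ and $K_2^w$ and hence strictly decreases $\D_B$, contradicting the assumption that $K$ is in Behrend-bisecting position. Statements (i) and (ii) are symmetric and differ only in the choice of the direction of contraction.

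Suppose (i) fails: fix $u\in\mathbb S^1$ and $w\in B_K$, with corresponding minimizing bisection $(K_1^w,K_2^w)$, such that $|u^Tv|<1/\sqrt{2}$ for every $v\in V_{K_1^w}\cup V_{K_2^w}$. Working in the orthonormal basis $\{u,u^\bot\}$, define $\phi_t$ as the diagonal map acting as $(1+t)$ in direction $u$ and $(1+t)^{-1}$ in direction $u^\bot$, so $|\det\phi_t|=1$. For (ii) the hypothesis becomes $|u^Tv|>1/\sqrt{2}$ for every such $v$, and I would take the reciprocal choice: contract by $(1+t)^{-1}$ along $u$ and expand by $(1+t)$ along $u^\bot$. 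In both cases, writing $v=\cos\theta\,u+\sin\theta\,u^\bot$,
\[
\|\phi_t(v)\|^2=(1+t)^{\pm 2}\cos^2\theta+(1+t)^{\mp 2}\sin^2\theta,
\]
whose derivative at $t=0$ equals $\pm 2(\cos^2\theta-\sin^2\theta)=\pm 2(2\cos^2\theta-1)$. A direct check shows that, under our hypothesis, this derivative is strictly negative for every $v\in V_{K_1^w}\cup V_{K_2^w}$, so $\|\phi_t(v)\|<1$ for all sufficiently small $t>0$.

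The next step, which I expect to be the main technical obstacle, is to promote these pointwise strict decreases along diametrical directions into a uniform strict decrease of $\D(\phi_t(K_j^w))$ for $j=1,2$. I would argue as follows: express $\D(\phi_t(K_j^w))=\max_{v\in\mathbb S^1}\|\phi_t(v)\|\,f_j(v)$, where $f_j(v)$ denotes the length of $K_j^w$ in direction $v$, so that $f_j$ attains its maximum $\D(K_j^w)=\D_B(K)$ exactly on $V_{K_j^w}$. By compactness of $V_{K_j^w}$ and joint continuity of $(t,v)\mapsto\|\phi_t(v)\|$, there exist $\delta>0$ and a neighborhood $U$ of $V_{K_1^w}\cup V_{K_2^w}$ in $\mathbb S^1$ on which $\|\phi_t(v)\|\leq 1-\delta$ for small $t>0$; on $\mathbb S^1\setminus U$, compactness and the definition of $V_{K_j^w}$ force $f_j(v)\leq\D_B(K)-\varepsilon$ for some $\varepsilon>0$, while $\|\phi_t(\cdot)\|_{op}\to 1$ as $t\to0$. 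Combining these two regimes gives $\D(\phi_t(K_j^w))<\D_B(K)$ for $j=1,2$ and $t>0$ small enough.

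Finally, since $\phi_t$ is a linear bijection, $(\phi_t(K_1^w),\phi_t(K_2^w))$ is a bisection of $\phi_t(K)$ determined by the line segment $\phi_t(K_1^w\cap K_2^w)$, so
\[
\D_B(\phi_t(K))\leq\max\{\D(\phi_t(K_1^w)),\D(\phi_t(K_2^w))\}<\D_B(K),
\]
while $\A(\phi_t(K))=\A(K)$ because $|\det\phi_t|=1$. Hence $\A(\phi_t(K))/\D_B(\phi_t(K))^2>\A(K)/\D_B(K)^2$, contradicting the assumption that $K$ is in Behrend-bisecting position and proving both (i) and (ii).
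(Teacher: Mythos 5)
Your proof is correct and follows the same underlying strategy as the paper's (which in turn adapts \cite[Lemma 3.2]{GMS}): assume the conclusion fails, apply a linear map close to the identity that strictly shrinks all diametrical directions of $K_1^w$ and $K_2^w$, use the image bisection $(\phi_t(K_1^w),\phi_t(K_2^w))$ to bound $\D_B(\phi_t(K))$ from above, and contradict the Behrend-bisecting position. The differences are worth noting. First, you take $\phi_t$ volume-preserving (expansion $(1+t)$ in one direction, contraction $(1+t)^{-1}$ in the orthogonal one), so $\A(\phi_t(K))=\A(K)$ and \emph{any} strict decrease of $\D_B$ already yields the contradiction; the paper instead uses the non-volume-preserving map $A_\varepsilon=\mathrm{diag}(1,1-\varepsilon)$ and must compare, to first order in $\varepsilon$, the area loss $(1-\varepsilon)$ against the squared-diameter loss, which is why it needs the uniform bounds $\cos^2\theta>\tfrac12(1+\delta)$ and the auxiliary constants $\delta',\delta''$ controlling the diametrical directions of the perturbed bodies. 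Your normalization makes that quantitative bookkeeping unnecessary and is arguably cleaner. Second, you control the maximum over \emph{all} directions via the identity $\D(\phi_t(K_j^w))=\max_{v\in\mathbb S^1}\|\phi_t(v)\|f_j(v)$ (with $f_j$ the maximal chord length of $K_j^w$ in direction $v$) and a two-regime compactness argument, whereas the paper argues through the perturbed bodies' own diametrical directions; both are variants of the same compactness idea. Third, you prove (ii) directly by the reciprocal map, while the paper deduces (ii) from (i) applied to $u'\in\mathbb S^1\cap u^\bot$ — equivalent, since your map for (ii) is the map for (i) with the roles of $u$ and $u^\bot$ exchanged.

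Two small imprecisions to fix, neither fatal. The claim that $\|\phi_t(v)\|\leq 1-\delta$ on a neighborhood $U$ of $V_{K_1^w}\cup V_{K_2^w}$ ``for small $t>0$'' cannot hold with $\delta$ independent of $t$, since $\phi_t\to\mathrm{id}$; what compactness of $V_{K_1^w}\cup V_{K_2^w}$ and the negativity of the derivative give is $\|\phi_t(v)\|\leq 1-\delta t$ on $U$ for $t\in(0,t_1]$ (or simply: fix one small $t$ and a $\delta$ depending on it), which is all your two-regime argument needs. Also, the bound $f_j\leq\D_B(K)-\varepsilon$ on the compact set $\mathbb S^1\setminus U$ implicitly uses upper semicontinuity of the maximal chord length function $f_j$ (it is the radial function of the difference body, hence continuous for non-degenerate convex bodies and upper semicontinuous in general, covering a possibly degenerate piece of the bisection); a sentence to this effect should be added.
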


\begin{proof}
We start by proving (i). Let us suppose that for every $v\in V_{K^w_1}\cup V_{K^w_2}$
we have that $|u^Tv|<1/\sqrt{2}$. Hence every $v\in V_{K^w_1}\cup V_{K^w_2}$ makes an angle $\theta$
with the line $u^\bot$ satisfying
\[
\theta=\frac{\pi}{2}-\arccos(u^Tv)=\arcsin(u^Tv)<\arcsin\frac{1}{\sqrt{2}}=\frac{\pi}{4},
\]
and so $\cos^2\theta>1/2$.
More precisly, since $K$ is compact (as well as $K_i^w$, for $i=1,2$), there exists $\delta>0$ such that
for every $v\in V_{K_1^w}\cup V_{K_2^w}$ making angle $\theta$ with respect to $u^\bot$, we have
\begin{equation}\label{eq:Proof1}
\cos^2\theta>\frac12(1+\delta).
\end{equation}
After a suitable rotation of $K$, we can suppose that $u=(1,0)$. For small $\varepsilon>0$, consider the endomorphism of $\mathbb R^2$
determined by the matrix
\[
A_\varepsilon:=\left(\begin{array}{cc} 1 & 0 \\ 0 & 1-\varepsilon \end{array}\right).
\]
Using elementary trigonometry and calculus, we can see that the length of any line segment $\ell$, making angle
$\theta$ with $u^\bot$, varies under $A_\varepsilon$ according to the formula
\begin{equation}\label{eq:Proof2}
||A_\varepsilon\ell||=||\ell||\sqrt{1-2\,\varepsilon\cos^2\theta+\varepsilon^2\cos^2\theta}
=||\ell||(1-\varepsilon\cos^2\theta+O(\varepsilon^2)).
\end{equation}

Let $K'=A_\varepsilon K$ and $(K^w_i)'=A_\varepsilon K^w_i$, for $i=1,2$
(since $A_\varepsilon$ is bijective, then $((K_1^w)',(K_2^w)')$ is a bisection of $K'$).
As $A_\epsilon$ is close to the identity matrix for small $\varepsilon$, and $K$, $K^w_1$, $K_2^w$ are compact sets,
for every $v'\in D_{(K_1^w)'}\cup D_{(K^w_2)'}$ making an angle $\theta'$ with $u^\bot$
it is possible to choose $\delta'>0$ small enough such that
\begin{equation}
\label{eq:theta}
\cos^2\theta'>\frac{1}{2}(1+\delta').
\end{equation}

Let $A_\varepsilon(\ell)$ be the line segment in $K'$ with $||A_\varepsilon(\ell)||=\max\{\D((K_1^w)'),\D((K_2^w)')\}$,
being $\ell$ the corresponding line segment in $K$, making angle $\theta''$ with $u^\bot$.
Then, equation~\eqref{eq:theta} implies that there exists $\delta''$ such that
\[
\cos^2\theta''>\frac{1}{2}(1+\delta''),
\]
since $A_\varepsilon^{-1}$ is also close to the identity matrix.
Thus, taking into account \eqref{eq:Proof2} and the fact that $w\in B_K$, we have
\begin{align*}\label{eq:Proof3}
\D_B(K')&\leq\max\{\D((K_1^w)'),\D((K_2^w)')\}=||A_\varepsilon(\ell)||\\
&=||\ell||(1-\varepsilon\cos^2\theta+O(\varepsilon^2))\leq\max\{\D(K^w_1),\D(K^w_2)\}\,(1-\varepsilon\cos^2\theta+O(\varepsilon^2))\\
& =\D_B(K)\,(1-\varepsilon\cos^2\theta+O(\varepsilon^2)),
\end{align*}
and so, since $\A(K')=\A(A_\varepsilon K)=(1-\varepsilon)\,\A(K)$, we conclude that 
\[
\begin{split}
\frac{\A(K')}{\D_B(K')^2} & \geq\frac{\A(K)}{\D_B(K)^2}\frac{1-\varepsilon}{(1-\varepsilon\cos^2\theta''+O(\varepsilon^2))^2} \\
& \geq \frac{\A(K)}{\D_B(K)^2}\frac{1-\varepsilon}{1-2\,\varepsilon\cos^2\theta''+O(\varepsilon^2)}\\
& >\frac{\A(K)}{\D_B(K)^2} \frac{1-\varepsilon}{1-(1+\delta'')\varepsilon+O(\varepsilon^2)}>\frac{\A(K)}{\D_B(K)^2},
\end{split}
\]
for $\varepsilon$ small enough, contradicting the fact that $K$ is in Behrend-bisecting position.

On the other hand, (ii) follows directly from (i),
since (ii) holds for $u\in\mathbb S^1$ if (i) holds for $u'\in\mathbb S^1\cap u^\bot$ (and viceversa).
\end{proof}

\begin{remark}
\label{re:triangles}
We will now see that, in contrast with Proposition~\ref{prop:behrendPos},
the necessary condition in Lemma~\ref{lem:behrendBisectPos}
for $K$ to be in Behrend-bisecting position is not \emph{sufficient}.
Let $K^\theta\in\K^2$ be the \emph{isosceles triangle with different angle $\theta\in[0,\pi/3]$}, with
$p_1$ the vertex of angle $\theta$, and $p_2,p_3$ the other two vertices.
For any minimizing bisection $(K^\theta_1,K^\theta_2)$ of $K^\theta$ determined by a line segment,
we can assume that $p_1\in K^\theta_1$ and $p_2,p_3\in K^\theta_2$
(otherwise, the diameter of one of the subsets will be equal to $\D(K^\theta)$, and so the bisection will not be minimizing).
By a suitable rescaling, we can suppose without loss of generality that
$p_2=(1,0)$, $p_3=(-1,0)$, and $p_1=(0,\tan((\pi-\theta)/2))$.
The distance from $q_\lambda=(1-\lambda)\,p_1+\lambda\,p_2$
to $p_1$ equals $\lambda\sqrt{1+\tan((\pi-\theta)/2)^2}$,
whereas to $p_3$ equals $\sqrt{(1+\lambda)^2+(1-\lambda)^2\tan((\pi-\theta)/2)^2}$.
\begin{figure}[ht]
  \includegraphics[width=0.22\textwidth]{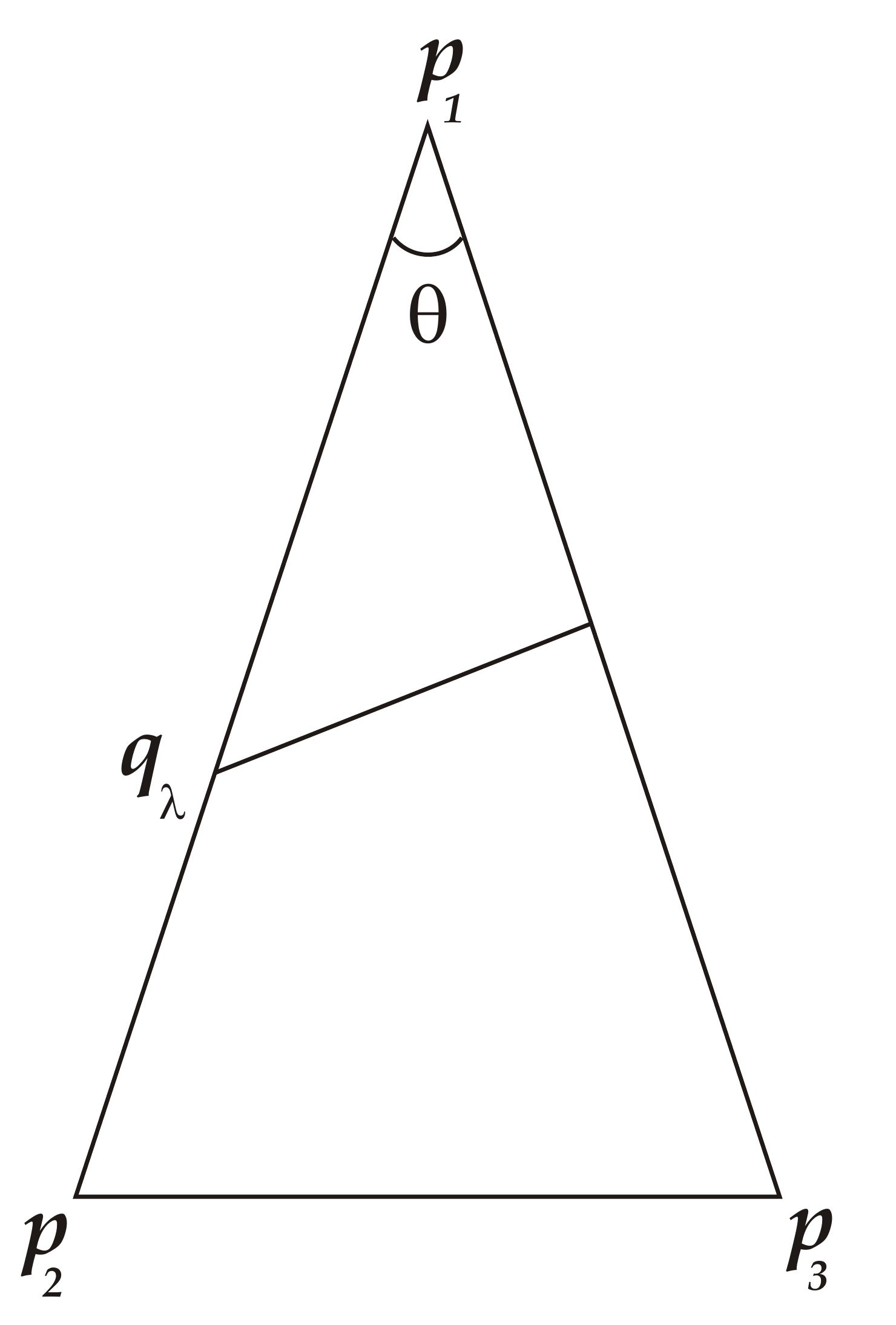}\\
  \caption{An isosceles triangle $K^\theta$ and an arbitrary bisection of $K^\theta$}\label{fig:isosceles}
\end{figure}
Since the bisection is minimizing, these two distances must coincide, and so
the value of $\lambda$ must be equal to
\[
\lambda_m=\lambda_m(\theta)=\frac{1+\tan(\frac{\pi-\theta}{2})^2}{2\left(\tan(\frac{\pi-\theta}{2})^2-1\right)}.
\]
An analogous reasoning for the points of the edge $\overline{p_1\,p_3}$
yields that the \emph{only} minimizing bisection by a line segment is given by the horizontal segment
\[
\bigg[\bigg(-\lambda_m,(1-\lambda_m)\tan\bigg(\frac{\pi-\theta}{2}\bigg)\bigg),\bigg(\lambda_m,(1-\lambda_m)\tan\bigg(\frac{\pi-\theta}{2}\bigg)\bigg)\bigg].
\]
In this case,
\[
\lambda_m\bigg(\pm 1,-\tan\bigg(\frac{\pi-\theta}{2}\bigg)\bigg) \in V_{K^\theta_1}\quad\text{and}\quad
\bigg(\pm (\lambda_m+1),(1-\lambda_m)\tan\bigg(\frac{\pi-\theta}{2}\bigg)\bigg)\in V_{K^\theta_2}.
\]
It can be checked that for $\theta\in[\pi/6,\pi/3]$, the triangles $K^\theta$ satisfy the thesis in Lemma~\ref{lem:behrendBisectPos},
by a direct analysis of the positions of the vectors of $V_{K^\theta_1}\cup V_{K^\theta_2}$.
However, not all of those triangles are in Behrend-bisecting position. Note that the isodiametric quotient
\[
\frac{\A(K^\theta)}{\D_B(K^\theta)^2}=\frac{\tan(\frac{\pi-\theta}{2})}{\lambda_m^2(1+\tan(\frac{\pi-\theta}{2})^2)}=2\cos^2(\theta)\sin(\theta)
\]
attains its maximum value in the interval $[0,\pi/3]$ only when $\theta=\theta_M=\arccos(\sqrt{2/3})$   
($\approx 35.26^\circ$), with maximum value
\[
\frac{\A(K^{\theta_M})}{\D_B(K^{\theta_M})^2}=\frac{4}{3\sqrt{3}},
\]
which implies that $K^{\theta_M}$ is the only isosceles triangle $K^\theta$, with $\theta\in[0,\pi/3]$,
which is a candidate for being in Behrend-bisecting position.
\end{remark}

The following Proposition~\ref{prop:TriangleOptimal} proves that, in fact,
the unique \emph{triangle} in Behrend-bisecting position is $K^{\theta_M}$ from Remark~\ref{re:triangles}.

\begin{proposition}\label{prop:TriangleOptimal}
\label{prop:triangles}
The unique triangle in Behrend-bisecting position is $K^{\theta_M}$ from Remark~\ref{re:triangles}.
\end{proposition}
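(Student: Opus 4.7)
Since any two triangles in $\K^2$ are affinely equivalent, a triangle $T$ is in Behrend-bisecting position if and only if $\A(T)/\D_B(T)^2$ attains the supremum of this quotient over all triangles; by Lemma~\ref{lem:isodiamMaximum} this supremum is a maximum. It therefore suffices to show that any maximizer is similar to $K^{\theta_M}$, which I would do in three steps.

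The first and crucial step is a variational argument forcing the maximizer to be isosceles. Let $T=\triangle p_1p_2p_3$ be a maximizer and, by Lemmas~\ref{lem:StraightLine}, \ref{lem:minimum} and~\ref{lem:EqualValueBisection}, assume that its minimizing bisection is a line segment $[x,y]$ with $x\in[p_1,p_2]$, $y\in[p_1,p_3]$ (so cutting off $p_1$), and with $\D(T_1)=\D(T_2)=\D_B(T)$ where $T_1=\triangle p_1xy$. If $|p_1p_2|\ne|p_1p_3|$, I would construct a family $A_\varepsilon$ of linear maps close to the identity---a shear that moves $p_2,p_3$ symmetrically across the perpendicular bisector of $[x,y]$, normalized to preserve area---and argue, in the style of the proof of Lemma~\ref{lem:behrendBisectPos}, that both $\D(A_\varepsilon T_1)$ and $\D(A_\varepsilon T_2)$ strictly decrease to first order in $\varepsilon$. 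This would force $\A(A_\varepsilon T)/\D_B(A_\varepsilon T)^2>\A(T)/\D_B(T)^2$, contradicting the maximality of $T$. Hence $T$ must be isosceles with apex $p_1$, i.e., $T=K^\theta$ for some $\theta\in(0,\pi)$.

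In the second step I would restrict $\theta$ to $(0,\pi/3]$ and optimize. Since the minimizing bisection cuts off $p_1$ and $\D_B(T)\le\D(T)$ always, if $\theta>\pi/3$ then $|p_2p_3|$ is the longest side of $T$; the trapezoid $T_2$ contains $[p_2,p_3]$, whence $\D(T_2)\ge|p_2p_3|=\D(T)$, forcing $\D_B(T)=\D(T)$. But then $\A(T)/\D_B(T)^2=\A(T)/\D(T)^2\le\sqrt{3}/4<4/(3\sqrt{3})$ by the classical isodiametric inequality for triangles, contradicting maximality. Hence $\theta\in(0,\pi/3]$, and by Remark~\ref{re:triangles} the quotient $2\cos^2(\theta)\sin(\theta)$ on this interval is uniquely maximized at $\theta=\theta_M=\arccos(\sqrt{2/3})$. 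Therefore $T$ is similar to $K^{\theta_M}$, as desired.

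The main obstacle is the variational argument in the first step. The necessary condition of Lemma~\ref{lem:behrendBisectPos} alone cannot pinpoint $K^{\theta_M}$, since it is satisfied by every $K^\theta$ with $\theta\in[\pi/6,\pi/3]$; a genuine first-order computation simultaneously involving $\A$ and both piece-diameters through $A_\varepsilon$ is needed. Since the diameter-realizing segments in $T_1$ and $T_2$ may lie along distinct edges or diagonals, the first-order analysis of $\D(A_\varepsilon T_i)$ requires a careful case analysis analogous to the one in the proof of Lemma~\ref{lem:behrendBisectPos}, verifying that a suitable shear simultaneously decreases the dominant lengths in both pieces while preserving the area of $T$.
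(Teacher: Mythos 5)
Your overall skeleton is fine, and your second step (excluding apex angles above $\pi/3$ and then invoking the one-parameter optimization of Remark~\ref{re:triangles}) is sound and close to the paper's own isosceles analysis. The genuine gap is the first step, the reduction to isosceles triangles, which you only sketch and yourself flag as the main obstacle; as proposed it provably cannot cover all cases. A first-order perturbation by an area-preserving linear map $A_\varepsilon=I+\varepsilon S$ with $\mathrm{tr}(S)=0$ changes the length of a segment of direction $v$ at rate proportional to $v^TSv$, so a shear can strictly decrease both $\D(A_\varepsilon T_1)$ and $\D(A_\varepsilon T_2)$ at first order if and only if \emph{all} diametrical directions of $T_1$ and $T_2$ lie strictly within angle $\pi/4$ of a common line; the failure of this concentration is exactly the content of Lemma~\ref{lem:behrendBisectPos}, so no ``more careful'' first-order computation through $A_\varepsilon$ can extract more than that lemma, which, as you note, does not pinpoint the isosceles shape. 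Concretely, for a general triangle the minimizing chord joins the point $q\in[p_1,p_3]$ equidistant from $p_1$ and $p_2$ to the analogous point of $[p_1,p_2]$, and the pairs realizing the common value $\D(T_1)=\D(T_2)$ are (generically) $(p_1,q)$ and $(q,p_2)$; these segments meet at $q$ at angle $\pi-2\alpha_1$, where $\alpha_1$ is the angle of $T$ at the cut-off vertex. If $\alpha_1=\pi/4$ the two directions are orthogonal, and then $u_1^TSu_1+u_2^TSu_2=\mathrm{tr}(S)=0$ for every admissible $S$, so the two piece diameters can never both strictly decrease at first order; tie configurations (e.g.\ $d(p_2,p_3)$ also attaining $\D(T_2)$) produce the same obstruction. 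Hence non-isosceles candidates with $\alpha_1=\pi/4$, among others, are not eliminated by your variation, and closing the argument would require a genuinely new ingredient (second-order information, simultaneous motion of the chord, or a direct comparison).

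The paper gets the isosceles reduction without any variational argument: from the minimizing chord described above it passes to the isosceles triangle $K'=\conv(\{p_1,p_2,p_3'\})\supseteq K$, where $p_3'$ lies on the ray from $p_1$ through $p_3$ at equal distance from $p_1$ and $p_2$; the same chord construction shows $\D_B(K')=\D_B(K)$, so $\A(K)/\D_B(K)^2\le\A(K')/\D_B(K')^2$ and the apex angle of $K'$ is $\alpha_1\le\pi/3$, after which Remark~\ref{re:triangles} (together with a direct computation for isosceles triangles with apex angle in $[\pi/3,\pi]$) finishes the proof. Replacing your step 1 by such a monotone containment argument, or supplying separate arguments for the orthogonal and tie cases, is what is needed to turn your proposal into a proof.
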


\begin{proof}
First of all, we will see that, in the class of isosceles triangles,
the isodiametric quotient is uniquely maximized by $K^{\theta_M}$.
Let $K^\theta\in\K^2$ be now the isosceles triangle with different (largest) angle $\theta\in[\pi/3,\pi]$.
Let $p_1$ be the vertex of angle $\theta$, and let $p_2,p_3$ be the other two vertices.
For any minimizing bisection $(K^\theta_1,K^\theta_2)$ of $K^\theta$ determined by a line segment,
we can now suppose that $p_1,p_2\in K^\theta_1$ and that $p_3\in K^\theta_2$,
and so $d(p_1,p_2)\leq\D_B(K^\theta)$.
In particular, if we consider the bisection given by the line segment $[p_1,(1/2)(p_2+p_3)]$,
then $\D(K^\theta_1)=\D(K^\theta_2)=d(p_1,p_2)$, and so $\D_B(K^\theta)=d(p_1,p_2)$.
Call $a=d(p_1,p_2)$ and $b=d(p_2,p_3)$. Then, basic computations show that $b=2\,a\sin(\theta/2)$ and
\[
\frac{\A(K^\theta)}{\D_B(K^\theta)^2}=\frac{\frac{1}{2}(2\,a\sin(\frac{\theta}{2}))\sqrt{a^2-a^2\sin(\frac{\theta}{2})^2}}{a^2}
=\sin\bigg(\frac{\theta}{2}\bigg)\cos\bigg(\frac{\theta}{2}\bigg)=\frac{\sin \theta}{2},
\]
and hence
\[
\frac{\A(K^\theta)}{\D_B(K^\theta)^2}\leq\frac{\A(K^{\frac{\pi}{2}})}{\D_B(K^{\frac{\pi}{2}})^2}=\frac{1}{2}\leq\frac{\A(K^{\theta_M})}{\D_B(K^{\theta_M})^2},
\]
taking into account Remark~\ref{re:triangles}.

Now consider a general triangle $K\in\K^2$.
We can assume that $K=\conv(\{p_1,p_2,p_3\})$ for some $p_i\in\mathbb R^2$, $i=1,2,3$,
with $\D(K)=d(p_1,p_2)$. 
Let $\alpha_i>0$ be the angle at vertex $p_i$, for $i=1,2,3$, with $\alpha_1\leq\alpha_2\leq\alpha_3$.
For any minimizing bisection $(K_1,K_2)$ of $K$, we can suppose that $p_1\in K_1$ and that $p_2$, $p_3\in K_2$
(otherwise, the bisection will not be minimizing).
Call $q_\lambda=(1-\lambda)\,p_1+\lambda\,p_3$, and let $\lambda_m\in[0,1]$ be such that
the distance $d_1$ from $q_{\lambda_m}$ to $p_1$ is the same than to $p_2$. Analogously, consider
$r_\mu:=(1-\mu)\,p_1+\mu\,p_2$, and let $\mu_m\in[0,1]$ be such that the distance $d_2$ from $r_{\mu_m}$ to $p_1$ is the same than to
$p_3$.
\begin{figure}[ht]
  \includegraphics[width=0.34\textwidth]{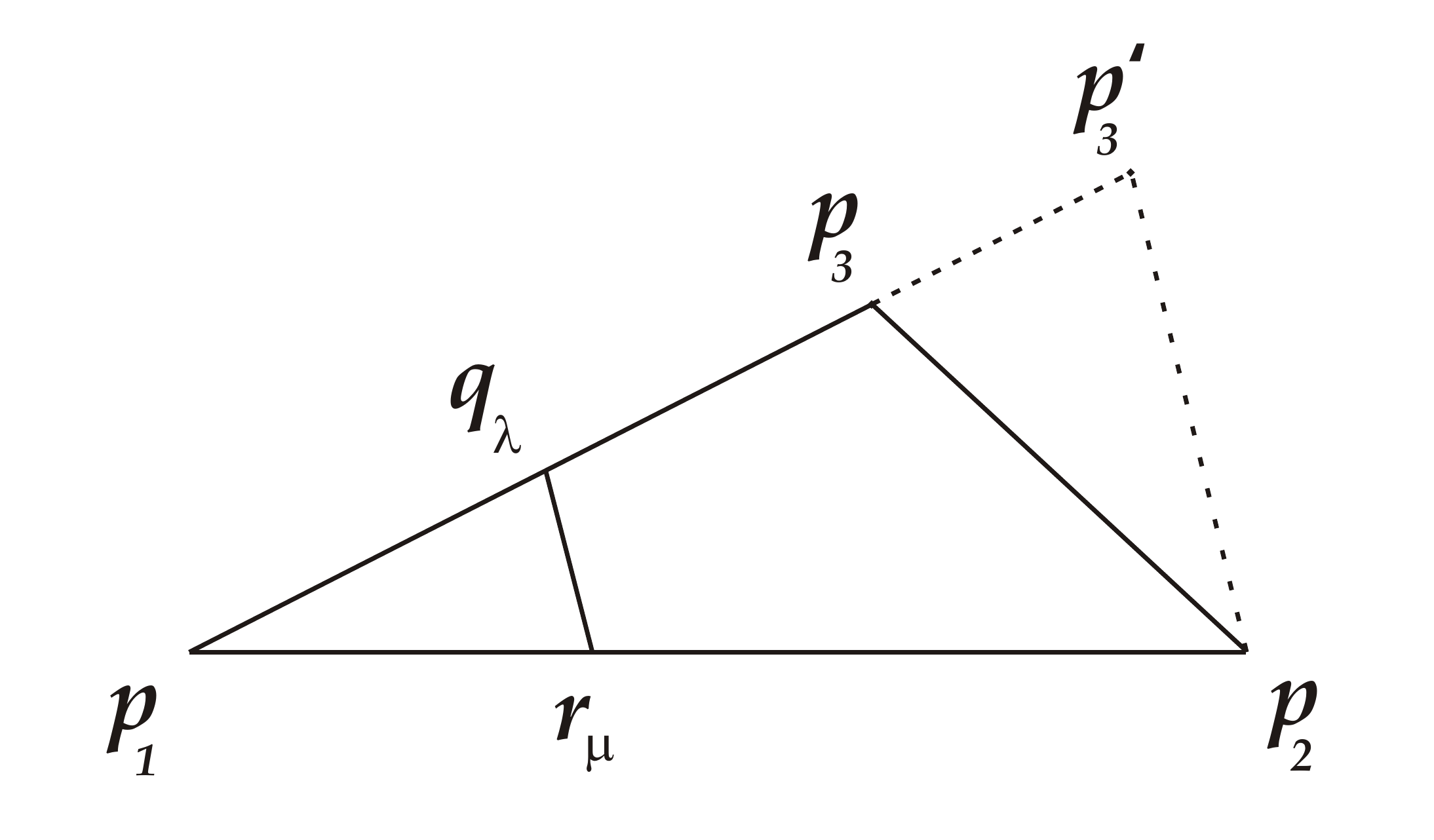}\\
  \caption{A general triangle $K$ and its extended isosceles triangle $K'$}\label{fig:extended}
\end{figure}
In this case, and since the distance from $p_1$ to $p_3$ is not larger than to $p_2$,
we clearly have that $d_1\geq d_2$, and hence the line segment with endpoints $q_{\lambda_m}$ and $r_{\mu_m}$ provides a minimizing bisection of $K$,
with subsets $K_1=\conv(\{p_1,q_{\lambda_m},r_{\mu_m}\})$ and $K_2=\conv(\{p_2,p_3,q_{\lambda_m},r_{\mu_m}\})$ satisfying that
$\D(K_1)=\D(K_2)=d_1$.
Let $p_3'$ be the point in the ray from $p_1$ to $p_3$ which is at the same
distance from $p_1$ than from $p_2$,
and consider the isosceles triangle $K'=\conv(\{p_1,p_2,p_3'\})$.
Then we clearly have that $K\subseteq K'$.
Moreover, a bisection attaining $\D_B(K')$ is given again by the line segment with endpoints $q_{\lambda_m}$
and $(1-\lambda_m)\,p_1+\lambda_m\,p_2$, with $\D_B(K')=\D_B(K)=d_1$. Hence
\[
\frac{\A(K)}{\D_B(K)^2}\leq\frac{\A(K')}{\D_B(K')^2},
\]
which implies that the isodiametric quotient of $K$ is always maximized by the isodiametric quotient
of an isosceles triangle whose different angle is not larger than $\pi/3$ (because $\alpha_1\leq\pi/3$).
Taking into account Remark~\ref{re:triangles} (and the fact that any planar triangle can be obtained by applying
an appropriate affine endomorphism to $K$), we conclude that the unique triangle in Behrend-bisecting position
is the isosceles triangle $K^{\theta_M}$ from Remark~\ref{re:triangles}.
\end{proof}

In view of Proposition~\ref{prop:triangles}, and taking into account the results from \cite{Beh},
it is natural to conjecture the following \emph{optimal} reverse isodiametric-type inequality for bisections.

\begin{conjecture}
Let $K\in\K^2$ be in Behrend-bisecting position. Then,
\[
\frac{\A(K)}{\D_B(K)^2}\geq\frac{4}{3\sqrt{3}},
\]
with equality if and only if $K$ is the isosceles triangle with different angle equal to $\arccos(\sqrt{2/3})$.
\end{conjecture}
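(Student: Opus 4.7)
The plan is to mimic Behrend's classical proof strategy (as streamlined in~\cite{GMS}) by strengthening Lemma~\ref{lem:behrendBisectPos} to a decomposition-of-identity condition on diametrical directions of $K_1^w$ and $K_2^w$, and then extracting the sharp constant $4/(3\sqrt 3)$ by a convex hull reduction and a case analysis. Since Proposition~\ref{prop:TriangleOptimal} already pins down the equality case, the content lies in proving the lower bound.

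First I would upgrade Lemma~\ref{lem:behrendBisectPos} to the bisection analogue of Proposition~\ref{prop:behrendPos}(iii): if $K$ is in Behrend-bisecting position and $(K_1^w, K_2^w)$ is a minimizing bisection with $\D(K_1^w) = \D(K_2^w) = \D_B(K)$ (available by Lemma~\ref{lem:EqualValueBisection}), then there exist directions $v_1,\dots,v_m \in V_{K_1^w} \cup V_{K_2^w}$ and scalars $\lambda_i \geq 0$ satisfying
\[
\sum_{i=1}^m\lambda_i\, v_i v_i^T = \mathrm{I}_2.
\]
To see this, I would test Behrend-bisecting optimality against volume-preserving linear perturbations $A_\varepsilon = \mathrm{I}_2 + \varepsilon M$ with $M$ symmetric and $\mathrm{tr}(M) = 0$. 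A first-order expansion along each diametrical chord gives $\D(A_\varepsilon K_i^w) \geq \D_B(K)\bigl(1 + \varepsilon\, v^T M v + O(\varepsilon^2)\bigr)$ for every $v \in V_{K_i^w}$. Since $(A_\varepsilon K_1^w, A_\varepsilon K_2^w)$ is still a bisection of $A_\varepsilon K$, one has $\D_B(A_\varepsilon K) \leq \max_i \D(A_\varepsilon K_i^w)$, while Behrend-bisecting optimality combined with area preservation forces $\D_B(A_\varepsilon K) \geq \D_B(K)$. Matching both inequalities yields
\[
\max_{v \in V_{K_1^w} \cup V_{K_2^w}} v^T M v \geq 0
\]
for every traceless symmetric $M$. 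Gordan's alternative, applied in the two-dimensional space of traceless symmetric matrices, then produces the claimed non-negative combination, with $\sum_i \lambda_i = \mathrm{tr}(\mathrm{I}_2) = 2$.

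Second, I would convert the decomposition into an area estimate. Each $v_i$ witnesses a chord of $K$ of length $\D_B(K)$ lying entirely in $K_1^w$ or $K_2^w$; together with the endpoints of the bisecting segment these chords span a polygon $P \subseteq K$ whose bisection by the same segment has both halves of diameter exactly $\D_B(K)$, so $\D_B(P) \leq \D_B(K)$. The task therefore reduces to showing $\A(P) \geq \frac{4}{3\sqrt 3}\,\D_B(K)^2$. By Carath\'eodory one may assume $m \leq 3$, hence $P$ has at most eight vertices, and I would run a combinatorial case analysis according to the split of the directions between $V_{K_1^w}$ and $V_{K_2^w}$ (the patterns $3{+}0$, $2{+}1$, $1{+}2$, $0{+}3$), aiming to show in each pattern that the minimizer of $\A(P)/\D_B(K)^2$ collapses to a triangle. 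Proposition~\ref{prop:TriangleOptimal} then identifies $K = K^{\theta_M}$ as the unique extremizer.

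The main obstacle, and presumably the reason the paper leaves this as a conjecture, is the final case analysis. In the classical Behrend setup three diametrical directions rigidify the body into an equilateral triangle almost immediately, but here the bisecting chord supplies an extra geometric degree of freedom and the diametrical directions can split genuinely across both subsets, producing kite-like quadrilaterals and more intricate configurations that have to be ruled out by hand. I would expect to eliminate them by a Steiner-type symmetrization across the bisecting chord (reducing to sets symmetric about it) followed by a one- or two-parameter monotonicity argument pushing $\A/\D_B^2$ strictly below $4/(3\sqrt 3)$ off the isosceles triangle, but carrying this out rigorously and exhaustively is the substantive work that the conjecture leaves open.
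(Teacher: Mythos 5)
There is no proof to compare against here: the statement you were given is left as an open \emph{conjecture} in the paper (the paper only proves the weaker bound $\A(K)/\D_B(K)^2\geq\sqrt{3}/4$ in Theorem~\ref{th:RevIsodiamBisectIneq}, via Corollary~\ref{cor:angles} and Groemer's result, and motivates the sharp constant through Proposition~\ref{prop:TriangleOptimal}). Your proposal is likewise not a proof, and you say so yourself: the entire case analysis that would produce the constant $4/(3\sqrt{3})$ is announced but not carried out, and that is precisely the content of the conjecture.

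Beyond the admitted incompleteness, the reduction in your second step is flawed as stated. Your first step (a John-type decomposition $\sum_i\lambda_i v_iv_i^T=\mathrm{I}_2$ with $v_i\in V_{K_1^w}\cup V_{K_2^w}$, obtained from traceless perturbations plus a separation argument) is plausible and genuinely stronger than what Lemma~\ref{lem:behrendBisectPos} gives, but this decomposition alone cannot force $\A(P)\geq\frac{4}{3\sqrt{3}}\,\D_B(K)^2$ for the polygon $P$ you build. For instance, three directions at pairwise angles $\pi/3$ with $\lambda_i=2/3$ satisfy the decomposition, and the three chords of length $\D_B(K)$ realizing them may all lie in the same subset $K_1^w$, forming an equilateral triangle of side $\D_B(K)$; then $\A(P)=\frac{\sqrt{3}}{4}\,\D_B(K)^2<\frac{4}{3\sqrt{3}}\,\D_B(K)^2$, so the target inequality for $P$ is false in the $3{+}0$ pattern unless you retain information your construction discards (that the \emph{other} subset also has diameter $\D_B(K)$, where its diametrical chords sit relative to the bisecting segment, and the convexity of $K$ across the cut). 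In other words, the decomposition by itself can only reproduce Behrend-type bounds of order $\sqrt{3}/4$, which the paper already has. Finally, the closing appeal to Proposition~\ref{prop:TriangleOptimal} does not deliver the equality case: that proposition identifies the unique \emph{triangle} in Behrend-bisecting position, but it does not show that equality in an area bound for a subset $P\subseteq K$ forces $K$ itself to coincide with $P$, nor that $K$ must be a triangle; a separate rigidity argument would be needed. So the proposal is a reasonable research plan, but it contains a genuine gap at its decisive step and, as written, reduces the conjecture to an intermediate claim that is not true in the generality in which you state it.
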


The following proof is strongly inspired in the original proof of Behrend \cite{Beh} for showing \eqref{eq:BehrendIneq}.

\begin{corollary}\label{cor:angles}
Let $K\in\K^2$ be in Behrend-bisecting position.
Given $w\in B_K$, let $(K^w_1,K^w_2)$ be the corresponding minimizing bisection of $K$.
Then, there exist $u_1,\,u_2\in V_{K^w_1}\cup V_{K^w_2}$ such that $|u_1^Tu_2|\leq 1/2$.
\end{corollary}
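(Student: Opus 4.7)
The plan is to argue by contradiction, reducing the statement to an angular covering fact on $\mathbb{RP}^1$ and then invoking Lemma~\ref{lem:behrendBisectPos}~(i). Writing $V:=V_{K^w_1}\cup V_{K^w_2}$, I would suppose, toward a contradiction, that $|u_1^Tu_2|>1/2$ for every pair $u_1,u_2\in V$. Identifying each unit vector of $V$ with a point of the projective line $\mathbb{RP}^1$ (endowed with the distance $d(u,v)=\arccos|u^Tv|\in[0,\pi/2]$), this assumption translates into the fact that $V$ has diameter strictly less than $\pi/3$ in $\mathbb{RP}^1$.

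The first step is to upgrade this diameter bound to an arc-covering statement: any compact $S\subset\mathbb{RP}^1$ of diameter $d<\pi/3$ is contained in an arc of length $d$. I would prove this directly by picking $p,q\in S$ realising the diameter and assuming some $r\in S$ lies on the complementary arc; writing $\alpha\in(0,\pi-d)$ for the arc-distance of $r$ from $p$ along the long way, a short computation with $d(p,r)=\min(\alpha,\pi-\alpha)$ and $d(q,r)=\min(\pi-d-\alpha,d+\alpha)$ shows that the constraints $d(p,r),d(q,r)\leq d$ force $\alpha\in[\pi-2d,d]$, which is empty as soon as $d<\pi/3$. This is precisely where the threshold $\cos(\pi/3)=1/2$ enters the picture.

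With $V$ covered by an arc of length $L<\pi/3$ in $\mathbb{RP}^1$, I would let $c\in\mathbb{S}^1$ be a lift of the midpoint of that arc and set $u:=c^{\bot}\in\mathbb{S}^1$. Each $v\in V$ then satisfies $|c^Tv|\geq\cos(L/2)>\cos(\pi/6)=\sqrt{3}/2$, whence
\[
|u^Tv|=\sqrt{1-(c^Tv)^2}<\sqrt{1-\tfrac{3}{4}}=\tfrac{1}{2}<\tfrac{1}{\sqrt{2}}
\]
for every $v\in V$. This directly contradicts Lemma~\ref{lem:behrendBisectPos}~(i), which, because $K$ is in Behrend-bisecting position, guarantees some $v\in V$ with $|u^Tv|\geq 1/\sqrt{2}$; the corollary follows.

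The main obstacle I expect is the arc-covering fact in $\mathbb{RP}^1$, whose threshold is genuinely sharp: three equispaced directions have pairwise distances exactly $\pi/3$ yet the smallest arc containing them has length $2\pi/3$, so without the strict inequality inherited from the contradictory hypothesis the covering would fail. Fortunately, this matches perfectly the strict $1/\sqrt{2}$ bound supplied by Lemma~\ref{lem:behrendBisectPos}, which is why the argument closes cleanly at the constant $1/2$.
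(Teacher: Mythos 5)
Your argument is correct, and it reaches the conclusion by a route that is organized differently from the paper's. The paper proves the corollary constructively: after rotating so that one diametrical direction is $e_1=(1,0)$, it applies Lemma~\ref{lem:behrendBisectPos}\,(i) twice, first with the test direction $e_2$ and then (in the remaining case) with the explicit test direction at angle $7\pi/12$, and checks by angle bookkeeping that one of the resulting pairs of directions makes an angle in $[\pi/3,2\pi/3]$. You instead argue by contradiction: if all pairs in $V:=V_{K_1^w}\cup V_{K_2^w}$ had $|u_1^Tu_2|>1/2$, then $V$ would have diameter $<\pi/3$ in $\mathbb{RP}^1$ (circumference $\pi$), your covering lemma would place $V$ inside an arc of that length, and a single application of Lemma~\ref{lem:behrendBisectPos}\,(i) at the direction orthogonal to the arc's midpoint yields the contradiction; your computation of the covering lemma (forcing $\alpha\in[\pi-2d,d]$, empty for $d<\pi/3$) is right, and the estimates $|c^Tv|>\sqrt{3}/2$, $|u^Tv|<1/2<1/\sqrt{2}$ are correct. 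What your approach buys is a cleaner conceptual core (a sharp arc-covering fact, with the threshold $\pi/3$ made transparent) and only one use of the key lemma; what the paper's buys is complete explicitness with no auxiliary covering statement. One small point you should add: to extract a pair $p,q$ realising the diameter and to pass from the pointwise strict bound to a diameter strictly below $\pi/3$, you need $V$ to be compact; this holds because the set of diametrical directions of a compact convex set is closed in $\mathbb{S}^1$ (a limit of segments $x_n+\D(K_i^w)[0,u_n]\subset K_i^w$ is again such a segment), a one-line verification that is worth stating since your covering lemma is genuinely sharp at $d=\pi/3$.
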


\begin{proof}
Call $e_1=(1,0)$ and $e_2=(0,1)$.
By applying a proper rotation,
we can assume that $e_1\in V_{K^w_1}\cup V_{K_2^w}$.
Then, for $e_2\in\mathbb{S}^1$, by Lemma~\ref{lem:behrendBisectPos}\,(i),
there exists $u=(\cos\alpha,\sin\alpha)\in V_{K^w_1}\cup V_{K_2^w}$ such that $|e_2^Tu|\geq 1/\sqrt{2}$,
which implies that $\alpha\in[\pi/4,3\pi/4]$.
We can assume that $\alpha\in[\pi/4,\pi/2]$, by reflecting $K$ with respect to $\lin(e_2)$ if necessary.
If $\alpha\geq\pi/3$, then $|e_1^Tu|\leq 1/{2}$, which proves the statement for $u_1=e_1$ and $u_2=u$.
So assume that $\alpha<\pi/3$, and note that, taking into account the previous argument,
we can suppose that $(\cos\mu,\sin\mu)\notin V_{K^w_1}\cup V_{K^w_2}$ for $\mu\in[\pi/3,2\pi/3]$.
Consider the vector $\wt{u}=(\cos(\pi/3+\pi/4),\sin(\pi/3+\pi/4))\in\mathbb S^1$.
Again by Lemma~\ref{lem:behrendBisectPos}\,(i),
there exists $v=(\cos\beta,\sin\beta)\in V_{K^w_1}\cup V_{K^w_2}$ such that $|\wt{u}^Tv|\geq 1/\sqrt{2}$.
This necessarily implies that $2\pi/3<\beta\leq \pi/3+\pi/2=5\pi/6<\pi$. In
particular, the angle between $u$ and $v$ is at least $2\pi/3-\pi/3=\pi/3$
and at most $5\pi/6-\pi/4=7\pi/12<2\pi/3$, and thus we have that $|u^Tv|\leq 1/2$, as desired
(just take $u_1=u$ and $u_2=v$).
\end{proof}

We are now able to prove Theorem~\ref{th:RevIsodiamBisectIneq}.

\begin{proof}[Proof of Theorem \ref{th:RevIsodiamBisectIneq}]
Since $K$ is in Behrend-bisecting position, for a given $w\in B_K$, there exist
$u_1,\,u_2\in V_{K^w_1}\cup V_{K^w_2}$ such that $|u_1^Tu_2|\leq 1/2$, by Corollary \ref{cor:angles},
where $(K^w_1,K^w_2)$ is the corresponding minimizing bisection.
Since $\D(K^w_1)=\D(K^w_2)=\D_B(K)$, there exist $x_1,x_2\in K$
such that $x_1+\D_B(K)[0,u_1],\ x_2+\D_B(K)[0,u_2]\subset K$
(note that each of these segments is contained in $K^w_1$ \emph{or} $K^w_2$).

Now we will use an argument from the proof of \cite[Th.~1.4]{GMS}. Since $K$ is convex, then
$C:=\conv(\{x_1+\D_B(K)[0,u_1],\ x_2+\D_B(K)[0,u_2]\})$ is contained in $K$, and so $\A(C)\leq\A(K)$.
In this situation, a result by Groemer \cite{Gro} (see~\cite[Th.~2]{BH}) states that $\A(C)$
is minimal if both segments have a common endpoint, and thus, straightforward computations give
\[
\begin{split}
\A(K) & \geq \A(C) \geq \A(\conv(\{\D_B(K)[0,u_1],\D_B(K)[0,u_2]\})) \\
      & = \frac{\D_B(K)^2}{2}\sqrt{1-(u_1^Tu_2)^2} \geq \frac{\sqrt{3}}{4}\,\D_B(K)^2,
\end{split}
\]
which completes the proof.
\end{proof}

\subsection{The centrally symmetric case}

As in~\cite{Beh}, we will also focus on the centrally symmetric case
(considering always the origin as center of symmetry),
pursuing an isodiametric-type inequality for bisections in this setting.
The following result was proven in \cite[Prop.~3.1]{mejora} (cf.~\cite[Prop.~4]{MPS}).

\begin{lemma}
\label{lem:symmetrics}
Let $K\in\K^2$ be centrally symmetric.
Then, there exists a minimizing bisection $(K_1,K_2)$ of $K$ such that
$K_1\cap K_2=[-p,p]$, for some $p\in\mathrm{bd}(K)$. Consequently, $K_1=-K_2$.
\end{lemma}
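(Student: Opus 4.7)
The plan is to start from any minimizing bisection by a line segment (whose existence is granted by Lemmas~\ref{lem:StraightLine}, \ref{lem:minimum}, and \ref{lem:EqualValueBisection}) and then slide the separating chord parallel to itself until it passes through the origin, using the central symmetry $K=-K$ to show that the resulting chord still attains $\D_B(K)$.

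Concretely, I would fix a minimizing bisection $(K_1',K_2')$ of $K$ determined by a line segment $\ell$ with $\D(K_1')=\D(K_2')=\D_B(K)$, and let $u\in\mathbb S^1$ be the direction of $\ell$ and $v\in u^\bot\cap\mathbb S^1$. Parametrize the chords parallel to $\ell$ by $t\in[t_1,t_2]$, setting $K_1^t:=K\cap\{x:\iprod{x}{v}\leq t\}$ and $K_2^t:=K\cap\{x:\iprod{x}{v}\geq t\}$; then $\ell$ lies on the line $L_{t_0}=t_0v+\lin(u)$ for some $t_0$. The key observation is that the central symmetry $K=-K$ forces $-K_1^t=K_2^{-t}$, so $\D(K_1^t)=\D(K_2^{-t})$ for every $t$, and in particular $\D(K_1^0)=\D(K_2^0)$. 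Hence the chord through the origin automatically produces two subsets with equal diameter.

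It remains to check that this common value is at most $\D_B(K)$. After possibly replacing the starting bisection by its centrally symmetric counterpart $(-K_1',-K_2')$, which is also minimizing and corresponds to the chord $-\ell$ at parameter $-t_0$, I may assume $t_0\geq0$. The function $t\mapsto\D(K_1^t)$ is non-decreasing (as in the proof of Lemma~\ref{lem:EqualValueBisection}), so
\[
\D(K_1^0)\leq\D(K_1^{t_0})=\D(K_1')=\D_B(K),
\]
and therefore $\max\{\D(K_1^0),\D(K_2^0)\}\leq\D_B(K)$, showing that $(K_1^0,K_2^0)$ is a minimizing bisection.

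Finally, since $L_0=\lin(u)$ passes through the origin and $K=-K$, the chord $K\cap L_0$ is a segment symmetric about $0$, i.e.\ of the form $[-p,p]$ for some $p\in\bd(K)$; and $K_1^0$, $K_2^0$ are the two halves of $K$ cut by $\lin(u)$, which are reflections of each other through $0$, giving $K_1^0=-K_2^0$. I do not foresee a genuine obstacle here: the only subtlety is the sign of $t_0$, and the centrally symmetric ``twin'' bisection handles it cleanly.
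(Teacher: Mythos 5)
Your argument is correct, and it is worth noting that the paper itself does not prove this lemma at all: it simply quotes it from \cite[Prop.~3.1]{mejora} (cf.~\cite[Prop.~4]{MPS}), results which are moreover phrased in the setting of equal-area bisections and the maximum relative diameter. What you give instead is a self-contained proof inside the paper's own toolkit: you reuse the sliding construction of Lemma~\ref{lem:EqualValueBisection} (translating the cutting chord parallel to itself), and add the one extra ingredient that central symmetry provides, namely the identity $-K_1^t=K_2^{-t}$, which forces equality of the two diameters already at $t=0$; monotonicity of $t\mapsto\D(K_1^t)$ plus the reflected twin bisection (to normalize $t_0\geq 0$) then shows the central chord is still minimizing, and $0\in\inter(K)$ guarantees that $K\cap\lin(u)=[-p,p]$ is a genuine bisecting chord with $K_1^0=-K_2^0$. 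The only step you might state explicitly is the identification $(K_1',K_2')=(K_1^{t_0},K_2^{t_0})$: a bisection determined by a line segment is automatically a cut by a full chord, because if an endpoint of the segment lay in the relative interior of a longer chord of $K$ it would lie on a boundary segment of $K$, contradicting that only the endpoints of the bisecting curve meet $\bd(K)$. With that remark added, your proof is complete and arguably preferable to an external citation, since it stays entirely within Lemmas~\ref{lem:StraightLine}--\ref{lem:EqualValueBisection}.
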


The above Lemma~\ref{lem:symmetrics} allows to obtain a necessary condition
for a given centrally symmetric convex body to be in Behrend-bisecting position.

\begin{lemma}\label{lem:Behrend-bisect-0symm}
Let $K\in\K^2$ be centrally symmetric and in Behrend-bisecting position.
For every $w\in B_K$ with $(K_1^w,-K_1^w)$ as the corresponding minimizing bisection of $K$,
we have that $K_1^w$ and $-K_1^w$ are in Behrend position.
\end{lemma}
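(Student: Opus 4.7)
The plan is to deduce the characterization (ii) of Proposition~\ref{prop:behrendPos} for $K_1^w$ directly from the necessary condition in Lemma~\ref{lem:behrendBisectPos}, exploiting the identity $K_2^w=-K_1^w$ which is guaranteed by Lemma~\ref{lem:symmetrics}. The central symmetry collapses the union $V_{K_1^w}\cup V_{K_2^w}$ appearing in Lemma~\ref{lem:behrendBisectPos} into the symmetric set $V_{K_1^w}\cup(-V_{K_1^w})$, which, as far as the inequality $|u^Tv|\geq 1/\sqrt{2}$ is concerned, is the same as $V_{K_1^w}$ itself.

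First I would verify the identity $V_{-K_1^w}=-V_{K_1^w}$. Clearly $\D(K_1^w)=\D(-K_1^w)$. If $v\in V_{K_1^w}$ is realized by a diametrical segment $x+\D(K_1^w)[0,v]\subset K_1^w$, then applying $-\mathrm{Id}$ produces $-x+\D(-K_1^w)[0,-v]\subset -K_1^w$, so $-v\in V_{-K_1^w}$; the reverse inclusion follows by the same argument applied to $-K_1^w$. Since $K_2^w=-K_1^w$, this gives $V_{K_1^w}\cup V_{K_2^w}=V_{K_1^w}\cup(-V_{K_1^w})$.

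Next, since $K$ is in Behrend-bisecting position, Lemma~\ref{lem:behrendBisectPos}(i) applied to any $u\in\mathbb{S}^1$ provides some $v\in V_{K_1^w}\cup V_{K_2^w}$ with $|u^Tv|\geq 1/\sqrt{2}$. Using the previous identification and the trivial equality $|u^Tv|=|u^T(-v)|$, we may replace $v$ by $-v$ if necessary to ensure $v\in V_{K_1^w}$. Thus condition (ii) of Proposition~\ref{prop:behrendPos} holds for $K_1^w$, and hence $K_1^w$ is in Behrend position. For $-K_1^w$ we observe that the Behrend position is invariant under the central reflection $x\mapsto -x$, since $\A(-M)=\A(M)$, $\D(-M)=\D(M)$, and the families $\{\phi(M):\phi\in End(\mathbb{R}^2)\}$ and $\{\phi(-M):\phi\in End(\mathbb{R}^2)\}$ coincide; hence $-K_1^w$ is in Behrend position as well.

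I do not foresee any serious obstacle here: the whole content of the statement is that the minimizing bisection of a centrally symmetric body is itself symmetric, so the double condition in Lemma~\ref{lem:behrendBisectPos} degenerates to the single Behrend condition of Proposition~\ref{prop:behrendPos}. The only point requiring a brief verification is the elementary identity $V_{-M}=-V_M$, and this is immediate from the definition of diametrical direction.
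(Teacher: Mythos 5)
Your argument is correct and follows essentially the same route as the paper: reduce $V_{K_1^w}\cup V_{K_2^w}$ to $V_{K_1^w}$ via the central symmetry $K_2^w=-K_1^w$ (the paper uses Lemma~\ref{lem:behrendBisectPos}(ii) together with the equivalence (ii') of Proposition~\ref{prop:behrendPos}, while you use part (i) and equivalence (ii), which is an immaterial difference). Note only that the identity $K_2^w=-K_1^w$ is part of the hypothesis of the lemma rather than something imported from Lemma~\ref{lem:symmetrics}, which merely guarantees that such a symmetric minimizing bisection exists.
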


\begin{proof}
Since $K$ is in Behrend-bisecting position and $w\in B_K$, Lemma \ref{lem:behrendBisectPos}\,(ii) implies that
for every $u\in\mathbb S^1$, there exists $v\in V_{K_1^w}\cup V_{-K_1^w}=V_{K_1^w}=V_{-K_1^w}$, such that $|u^Tv|\leq 1/\sqrt{2}$.
By Proposition \ref{prop:behrendPos}, we obtain that $K_1^w$ is in Behrend position, as well as $-K_1^w$.
\end{proof}

We can now prove Theorem~\ref{th:RevIsodiamBisect0symmIneq}, which establishes an isodiametric inequality for bisections in the
centrally symmetric case.

\begin{proof}[Proof of Theorem \ref{th:RevIsodiamBisect0symmIneq}]
Let $(K_1,K_2)$ be a minimizing bisection of $K$. We can suppose by Lemma \ref{lem:symmetrics} that $K_2=-K_1$.
As $K$ is centrally symmetric and in Behrend-bisecting position, Lemma \ref{lem:Behrend-bisect-0symm}
yields that $K_1$ (and also $K_2=-K_1$) is in Behrend position.
Thus \eqref{eq:BehrendIneq} implies that
\begin{equation}
\label{eq:twoequilateral}
\frac{\A(K)}{\D_B(K)^2}=\frac{\A(K_1)+\A(K_2)}{\D_B(K)^2}=\frac{\A(K_1)}{\D(K_1)^2}+\frac{\A(K_2)}{\D(K_2)^2}\geq
\frac{\sqrt{3}}{4}+\frac{\sqrt{3}}{4}=\frac{\sqrt{3}}{2}.
\end{equation}
\end{proof}

We will now proceed as in  
Proposition~\ref{prop:triangles},
but focusing on the affine class of the square, i.e., the parallelograms, which are centrally symmetric.
Proposition~\ref{prop:squares} shows that the \emph{only} parallelogram
in Behrend-bisecting position is the rectangle $[-1,1]\times[-2,2]$
(up to dilations and rigid motions, see Remark~\ref{re:inv}).

\begin{proposition}\label{prop:ParallelogramOptimal}
\label{prop:squares}
The unique parallelogram in Behrend-bisecting position is the rectangle $[-1,1]\times[-2,2]$.
\end{proposition}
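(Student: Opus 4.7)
The plan is to follow the strategy used in Proposition \ref{prop:triangles}, combined with Lemma \ref{lem:Behrend-bisect-0symm}. I take any parallelogram $P$ in Behrend-bisecting position. Lemma \ref{lem:symmetrics} provides a minimizing bisection $[-p,p]$ through the origin with halves $P_1$ and $-P_1$, and Lemma \ref{lem:Behrend-bisect-0symm} forces $P_1$ to be in Behrend position. Writing $P=\{sv_1+tv_2:|s|,|t|\le 1\}$ with $v_1,v_2$ linearly independent, the point $p\in\bd(P)$ is either a vertex of $P$ or lies in the relative interior of an edge, in which case I write $p=v_1+\tau v_2$ with $\tau\in(-1,1)$ up to a swap of $v_1,v_2$.

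Three cases then organize the analysis. If $p$ is a vertex of $P$, then $P_1$ is a triangle, and Behrend's theorem forces $P_1$ to be equilateral; consequently $P$ is a rhombus with $60^\circ$ and $120^\circ$ angles, for which a direct computation yields $\A(P)/\D_B(P)^2=\sqrt{3}/2$. Since the rectangle $R=[-1,1]\times[-2,2]$, affinely equivalent to $P$, satisfies $\A(R)/\D_B(R)^2=1>\sqrt{3}/2$ (an elementary check using the horizontal bisection), $P$ cannot be a maximizer, a contradiction. If instead $\tau=0$, then $P_1$ is the parallelogram with sides $2v_1$ and $v_2$; a non-rectangular parallelogram has only one pair of diametrical directions (the longer diagonal) and so cannot satisfy Proposition \ref{prop:behrendPos}\,(ii), while a non-square rectangle fails the same condition at the axial direction perpendicular to its longer side. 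Hence $P_1$ must be a square, forcing $v_1\perp v_2$ and $|v_2|=2|v_1|$, so $P=R$ up to dilations and rigid motions.

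The remaining subcase $\tau\ne 0$, in which $P_1$ is a proper trapezoid, is where the main obstacle lies. My plan is to write out the six pairwise distances among the four vertices of $P_1$ as explicit functions of $(v_1,v_2,\tau)$, combine the Behrend-position requirement on $P_1$ (Proposition \ref{prop:behrendPos}\,(iii), so that $I_2$ admits a non-negative decomposition by rank-one matrices built from diametrical directions of $P_1$) with the first-order optimality condition in $\tau$ arising from $[-p,p]$ being a minimum of $\D(P_1)$ along the edge, and show that the resulting algebraic system forces $\tau=0$, which reduces us to the previous case and therefore to $P=R$. The delicate step will be producing enough pairs of diametrical directions in the trapezoid to apply the rank-two decomposition, since generically a trapezoid admits only one such pair; here I will use the symmetry $P_1=-(-P_1)$ together with the minimality of $\tau$ to force two diameter-attaining vertex pairs with mutually orthogonal directions, and then rule out all non-degenerate solutions of the resulting equations.
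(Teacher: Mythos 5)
Your reduction via Lemma~\ref{lem:symmetrics} and Lemma~\ref{lem:Behrend-bisect-0symm}, and your treatment of the vertex case and of the midpoint case $\tau=0$ (a parallelogram is in Behrend position only if it is a square, by Proposition~\ref{prop:behrendPos}\,(ii)), are sound. The problem is the remaining subcase $\tau\neq 0$, in which the half $P_1$ is a genuine trapezoid: there you only state a plan. You propose to write down the six vertex distances, invoke Proposition~\ref{prop:behrendPos}\,(iii) together with a first-order optimality condition in $\tau$, ``force two diameter-attaining vertex pairs with mutually orthogonal directions, and then rule out all non-degenerate solutions of the resulting equations'' --- but none of these steps is carried out, and they are not routine. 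In particular, a trapezoid generically has a single pair of diametrical directions, and although Behrend position does force orthogonality when there are exactly two such pairs, producing the second pair from ``minimality in $\tau$'' and then solving the resulting algebraic system is exactly the content that is missing. Since this is the main case of the proposition, the argument as written is incomplete.

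For comparison, the paper avoids your goal of forcing $\tau=0$ altogether. In the non-vertex case it observes that, since $K_1$ is in Behrend position, it has at least two diametrical directions, and for a quadrangle with two parallel edges this forces one \emph{diagonal} $[p_1,p_3]$ to be diametrical; writing $h_1,h_2$ for the distances of the other two vertices to that diagonal, one gets
\[
\A(K_1)=\tfrac12\,\D(K_1)\,(h_1+h_2)\leq\tfrac12\,\D(K_1)\,d(p_2,p_4)\leq\tfrac12\,\D(K_1)^2,
\]
hence $\A(K)/\D_B(K)^2\leq 1$ for the whole parallelogram, with equality if and only if the two halves are squares, i.e. $K=[-1,1]\times[-2,2]$; this single estimate handles $\tau=0$ and $\tau\neq 0$ uniformly and, combined with the value $\sqrt{3}/2$ in the vertex case, yields the statement. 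If you want to salvage your approach, you could either prove your orthogonality-plus-algebra claim in full, or replace that subcase by a quotient bound of this diagonal type, which is both shorter and avoids any differentiability or genericity issues in the parameter $\tau$.
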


\begin{proof}
Let $K\subset\mathbb R^2$ be a parallelogram,
and let $[-p,p]$ be a line segment determining a minimizing bisection $(K_1,K_2)$ of $K$, for some $p\in\mathrm{bd}(K)$.
Since $K$ is in Behrend-bisecting position, then $K_1$ (and $K_2=-K_1$)
is in Behrend position, by Lemma~\ref{lem:Behrend-bisect-0symm}.
We will distinguish two possibilities:

If $p$ is a vertex of $K$, then $K_1$ and $K_2$ are triangles.
Since the only triangle in Behrend position is the equilateral one~\cite{Beh},
then the only candidate in this case is the parallelogram $P$ formed by two congruent equilateral triangles joined by a common
edge, with isodiametric quotient $\A(P)/\D_B(P)^2=\sqrt{3}/2$, in view of~\eqref{eq:twoequilateral}.

If $p$ is not a vertex of $K$,  
then $K_1$ 
is a quadrangle in Behrend position with two parallel edges.
We can assume that $K_1=\conv(\{p_1,p_2,p_3,p_4\})$, where $p_i\in\mathbb R^2$, $i=1,\ldots,4$.
Proposition \ref{prop:behrendPos} implies that there exist at least two different vectors $v_1,v_2\in V_{K_1}$,
and so $K_1$ contains at least two different diametrical segments.
Since $K_1$ is a quadrangle with two parallel edges,
then necessarily one of the diagonals of $K_1$, namely $[p_1,p_3]$, is a diametrical segment.
Denote by $h_1$ (resp., $h_2$) the distance from $p_2$ (resp., $p_4$) to $[p_1,p_3]$.
Then $h_1+h_2\leq d(p_2,p_4)\leq\D(K_1)$, and so
\[
\A(K_1)=\frac{1}{2}\,\D(K_1)\,(h_1+h_2)\leq\frac{\D(K_1)^2}{2}.
\]
Since $K_2=-K_1$, we will also have that $\A(K_2)\leq\D(K_2)^2/2$.
Then,
\[
\frac{\A(K)}{\D_B(K)^2}=\frac{\A(K_1)+\A(K_2)}{\D_B(K)^2}=\frac{\A(K_1)}{\D(K_1)^2}+\frac{\A(K_2)}{\D(K_2)^2}\leq\frac{1}{2}+\frac12=1.
\]
Moreover, we have equality above if and only if $h_1+h_2=\D(K_1)$.
This is equivalent to the fact that $[p_2,p_4]$ is orthogonal to $[p_1,p_3]$,
i.e., when $K_1$ (and thus $K_2$) is a square.
This implies that $K=K_1\cup K_2$ is a rectangle of the form $[-1,1]\times[-2,2]$.
Since this rectangle has isodiametric quotient greater than or equal to
the isodiametric quotient of $P$, the statement holds.
\end{proof}

\begin{remark}
\label{re:squares}
A remarkable consequence from Proposition~\ref{prop:squares} is that
the \emph{necessary} condition in Lemma~\ref{lem:Behrend-bisect-0symm} is \emph{not sufficient}
(analogously to Remark~\ref{re:triangles}):
the parallelogram consisting of two equilateral triangles touching
in a common edge, both of them in Behrend position~\cite{Beh},
\emph{is not} in Behrend-bisecting position.
\end{remark}

The previous Proposition~\ref{prop:squares} suggests that the inequality
from our Theorem~\ref{th:RevIsodiamBisect0symmIneq} is not sharp,
leadings us to the following conjecture.

\begin{conjecture}
Let $K\in\K^2$ be centrally symmetric and in Behrend-bisecting position. Then,
\[
\frac{\A(K)}{\D_B(K)^2}\geq 1,
\]
with equality if and only if $K=[-1,1]\times[-2,2]$.
\end{conjecture}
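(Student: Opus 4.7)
The plan is to follow the strategy of Theorem~\ref{th:RevIsodiamBisect0symmIneq}, upgrading the lower bound on $\A(K_1)$ from the general Behrend inequality $\A(K_1)\geq(\sqrt{3}/4)\,\D(K_1)^2$ to the centrally symmetric bound $\A(K_1)\geq\D(K_1)^2/2$ of~\eqref{eq:Behrend0symm}. Combined with $K_2=-K_1$ and $\D(K_1)=\D_B(K)$, this would give $\A(K)=2\,\A(K_1)\geq\D_B(K)^2$, which is the desired inequality.

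First, I would reduce by Steiner symmetrization. Let $[-p,p]$ be the bisecting segment from Lemma~\ref{lem:symmetrics}, placed on the $x$-axis, and set $e_2=(0,1)$. Applying $s_{e_2}$ as in the proof of Lemma~\ref{lem:Symmetry}, area is preserved, $\D_B$ does not increase, and central symmetry is retained; since $K$ is in Behrend-bisecting position, $\D_B$ is actually unchanged. Combining the original central symmetry of $K$ with the newly imposed axial symmetry across the $y$-axis shows that the symmetrized body is also symmetric about the $x$-axis. Hence one may assume without loss of generality that $K$ is doubly symmetric and that $K_1=K\cap\{y\geq 0\}$ is an axially symmetric convex body whose lower edge is exactly $[-p,p]\times\{0\}$. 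By Lemma~\ref{lem:Behrend-bisect-0symm}, $K_1$ is in Behrend position.

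The core step is to show that $K_1$ contains two perpendicular diametrical segments; once these are found, convexity of $K_1$ together with the Groemer-type estimate used in the proof of Theorem~\ref{th:RevIsodiamBisectIneq} gives $\A(K_1)\geq\D(K_1)^2/2$, as required. Reflection symmetry of $V_{K_1}$ across the $y$-axis forces off-axis directions to come in mirror pairs $(a,b),(-a,b)$. Applying Proposition~\ref{prop:behrendPos}(iii) to $K_1$, one distinguishes cases by which directions of $V_{K_1}$ appear in the identity decomposition $\sum_{i=1}^3\lambda_iu_iu_i^T=I_2$. A single mirror pair forces $a^2=b^2=1/2$, hence two perpendicular diameters at $\pm45^\circ$ from the $y$-axis, covering the generic case. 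The combination of a mirror pair with the vertical direction $(0,1)$ can be ruled out by a direct convexity argument: a vertical diameter of length $\D(K_1)$ must be realized on the $y$-axis, and then the distance from $(p,0)$ to the upper endpoint forces $p=0$, contradicting non-degeneracy of the bisection. For the equality case, the Groemer estimate is tight when the two perpendicular diameters cross at the $y$-axis and $K_1$ is a square, in which case the doubly symmetric $K$ is $[-1,1]\times[-2,2]$ up to the invariances of Remark~\ref{re:inv}.

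The hard part I anticipate is the remaining configuration where $V_{K_1}$ consists of a mirror pair $(a,b),(-a,b)$ together with the horizontal direction $(1,0)$, that is, $\D(K_1)=2p$ is realized by the bottom edge plus two additional oblique diameters that are not mutually perpendicular. This is exactly the structure of an equilateral-triangle-shaped $K_1$, where $\A(K_1)/\D(K_1)^2=\sqrt{3}/4<1/2$. In that configuration the doubled body $K$ is a rhombus, which by Proposition~\ref{prop:ParallelogramOptimal} is not in Behrend-bisecting position, so this case must in principle be excluded by the global optimality hypothesis. Translating this global obstruction into a local variational constraint on $K_1$ would presumably require an area-preserving diagonal perturbation $\phi_t=\mathrm{diag}(e^t,e^{-t})$ and a more refined first-order optimality analysis than the one used in Lemma~\ref{lem:behrendBisectPos}; the absence of such a local argument is exactly what keeps the statement as a conjecture rather than a theorem.
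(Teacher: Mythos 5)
This statement is stated in the paper only as a \emph{conjecture}: the authors prove the weaker constant $\sqrt{3}/2$ in Theorem~\ref{th:RevIsodiamBisect0symmIneq}, offer Proposition~\ref{prop:ParallelogramOptimal} as evidence, and explicitly leave the bound $1$ open, so there is no proof in the paper to compare yours against. Your proposal is likewise not a proof, and you say so yourself: the configuration in which $V_{K_1}$ consists of a mirror pair together with the horizontal direction (an equilateral-triangle-like half, whose doubling is a rhombus) is exactly the case you cannot exclude, and excluding it is the missing idea. The only tool the paper provides for a half-body $K_1$ is Lemma~\ref{lem:Behrend-bisect-0symm} combined with Behrend's inequality~\eqref{eq:BehrendIneq}, which gives $\A(K_1)\geq(\sqrt{3}/4)\,\D(K_1)^2$ and nothing better, because $K_1$ is in general \emph{not} centrally symmetric and~\eqref{eq:Behrend0symm} does not apply to it; turning the global affine optimality of $K$ into a constraint strong enough to rule out near-triangular halves would require a genuinely new variational argument that neither you nor the paper supplies.

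Beyond that acknowledged gap, the Steiner-symmetrization reduction is itself unsound. For a \emph{lower} bound the symmetrization goes the wrong way: $\A$ is preserved and $\D_B$ can only decrease, so $\A/\D_B^2$ can only increase, and a bound proved for $s_{e_2}(K)$ does not transfer back to $K$. Your fix, that ``since $K$ is in Behrend-bisecting position, $\D_B$ is unchanged,'' does not follow: $s_{e_2}(K)$ is not an affine image of $K$, while the hypothesis~\eqref{def:BBP} only compares $K$ with its affine images. Worse, even if $\D_B$ were unchanged, the next step applies Lemma~\ref{lem:Behrend-bisect-0symm} to the symmetrized body, which requires \emph{that} body to be in Behrend-bisecting position --- again not implied, since symmetrization leaves the affine class. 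Finally, even granting double symmetry, the case analysis of the decomposition in Proposition~\ref{prop:behrendPos}\,(iii) is not exhaustive: $V_{K_1}$ may contain two distinct mirror pairs (or off-axis directions not mirror-related) admitting a nonnegative decomposition of $\mathrm{I}_2$ with no two perpendicular diametrical directions, in which case the Groemer step only yields $\tfrac12\,\D(K_1)^2\sqrt{1-(u_1^Tu_2)^2}<\tfrac12\,\D(K_1)^2$. Treat the proposal as a strategy sketch consistent with the paper's partial results, not as a proof of the conjecture.
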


\section{The isominwidth-bisecting position and the reverse isominwidth inequality}
\label{sec:RevIsominwidthIneq}

In this section we will establish a reverse isominwidth inequality,
following the same scheme as in Section~\ref{sec:RevIsodiamIneq}.
In order to obtain such an inequality, we will focus on the planar convex bodies in isominwidth-bisecting position,
defined by equality~\eqref{eq:isominwidthoptimalposition}.
Our first observation is that the infimum in \eqref{eq:isominwidthoptimalposition}
is actually a minimum, and so, for any given $K\in\K^2$ there exists an affine representative in
isominwidth-bisecting position (we will omit the proof of this fact since it is completely analogous to Lemma~\ref{lem:isodiamMaximum}).
Notice also that $\w_B(K)=\w(K)/2$ by Lemma~\ref{lem:WidthBisect=Half}, and so
\begin{equation}
\label{eq:4}
\min_{\phi\in End(\mathbb R^{2})}\frac{\A(\phi(K))}{\w_B(\phi(K))^2}
=4\,\min_{\phi\in End(\mathbb R^{2})}\frac{\A(\phi(K))}{\w(\phi(K))^2}.
\end{equation}
This equality immediately gives the following Corollary~\ref{cor:isominwidthEquiv},
which states a new equivalence for the planar convex bodies in isominwidth optimal position,
defined by~\eqref{eq:iop} and introduced in ~\cite{GMS} (see~\cite[Th.~5.3]{GMS} for some other related equivalences).

\begin{corollary}\label{cor:isominwidthEquiv}
Let $K\in\K^2$. The following statements are equivalent:
\begin{enumerate}
\item[(i)] $K$ is in isominwidth-bisecting position.
\item[(ii)] $K$ is in isominwidth optimal position.
\end{enumerate}
\end{corollary}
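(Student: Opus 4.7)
The plan is to deduce Corollary~\ref{cor:isominwidthEquiv} as an essentially immediate consequence of Lemma~\ref{lem:WidthBisect=Half} combined with equation~\eqref{eq:4}. The core observation is that the relation $\w_B(M)=\w(M)/2$ holds for every $M\in\K^2$, and in particular for every affine image $\phi(K)$ of $K$. This makes the two quotients under consideration strictly proportional, term by term across the entire affine class of $K$.

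First I would record the pointwise identity
\[
\frac{\A(\phi(K))}{\w_B(\phi(K))^2}=4\,\frac{\A(\phi(K))}{\w(\phi(K))^2}\qquad\text{for every }\phi\in End(\mathbb R^2),
\]
which is just Lemma~\ref{lem:WidthBisect=Half} applied to $\phi(K)\in\K^2$ (here I implicitly restrict to regular $\phi$ so that $\phi(K)\in\K^2$; singular $\phi$ produce quotients that are irrelevant to the infimum, as the area degenerates while the width is non-negative). Taking the infimum over $\phi$ on both sides yields exactly~\eqref{eq:4}, so the two variational problems have proportional infima with the same proportionality constant $4$ as the pointwise ratio.

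Next, I would combine these two facts. The body $K$ is in isominwidth-bisecting position precisely when the pointwise quotient $\A(K)/\w_B(K)^2$ equals $\inf_\phi \A(\phi(K))/\w_B(\phi(K))^2$. Using the pointwise identity on the left-hand side and~\eqref{eq:4} on the right-hand side, this equality becomes
\[
4\,\frac{\A(K)}{\w(K)^2}=4\,\inf_{\phi\in End(\mathbb R^2)}\frac{\A(\phi(K))}{\w(\phi(K))^2},
\]
and dividing by $4$ gives exactly the defining equality~\eqref{eq:iop} of the isominwidth optimal position. Since every step is reversible, the two positions coincide, proving both implications (i)$\Rightarrow$(ii) and (ii)$\Rightarrow$(i) simultaneously.

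There is no real obstacle here: the content is entirely packaged in Lemma~\ref{lem:WidthBisect=Half}, and the proof is a one-line rescaling argument. The only minor point worth being careful about is that Lemma~\ref{lem:WidthBisect=Half} must be invoked for the whole family $\{\phi(K):\phi\in End(\mathbb R^2)\}$ rather than just for $K$ itself; once this is noted, the equivalence follows automatically.
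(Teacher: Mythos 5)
Your proposal is correct and follows essentially the same route as the paper: the paper also deduces the equivalence immediately from Lemma~\ref{lem:WidthBisect=Half} applied across the affine class, via the identity~\eqref{eq:4}, so the two quotients differ by the constant factor $4$ both pointwise and at the infimum. Your extra remark about invoking the lemma for every $\phi(K)$ (and disregarding singular $\phi$) is a reasonable clarification of a step the paper leaves implicit.
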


Finally, we can prove Theorem \ref{th:RevIsominwidthBisectIneq}.

\begin{proof}[Proof of Theorem \ref{th:RevIsominwidthBisectIneq}]
By Corollary \ref{cor:isominwidthEquiv}, $K$ is in isominwidth optimal position,
and taking into account Lemma~\ref{lem:WidthBisect=Half} and~\eqref{eq:GMSchymuraIneq} we conclude that
\[
\frac{\A(K)}{\w_B(K)^2}=4\,\frac{\A(K)}{\w(K)^2}\leq 4.
\]
The equality case follows directly from the corresponding equality case in \eqref{eq:GMSchymuraIneq}.
\end{proof}

\section{Other spaces}\label{sec:OtherSpaces}

In this section we will briefly discuss how most of the above definitions and posed problems can be extended to other spaces.
We will also point out some of the technical difficulties that we find in order to go on solving these problems in those settings.

\subsection{Isodiametric and Isominwidth bisecting inequalities in $\mathbb R^n$}
\label{subsec:rn}

Let $K\subset\mathbb R^n$ be a convex body with non-empty interior,
and denote by $\mathrm{V}(K)$ be the n-dimensional volume of $K$.
Recall that the diameter $\D(K)$ of $K$ is the maximum distance between any two points of $K$, whereas
the minimum width $\w(K)$ of $K$ is the minimum distance between two parallel hyperplanes containing $K$ between them.
We first extend the notion of bisection previously introduced for the planar setting.
Let $\mathbb B^n_2$ be the Euclidean unit ball of $\mathbb R^n$.

For a convex body $K$ in $\mathbb R^n$, a \emph{bisection} of $K$ will be any pair of closed sets $(K_1,K_2)$ satisfying that
\begin{itemize}
\item[(i)] $K=K_1\cup K_2$,
\item[(ii)] $K_1\cap K_2=l(\mathbb B^{n-1}_2)$, where $l:\mathbb B^{n-1}_2\rightarrow K$ is an injective and continuous map such that
$l(\mathbb B^{n-1}_2)\cap\mathrm{bd}(K)=l(\mathrm{bd}(\mathbb B^{n-1}_2))$.
\end{itemize}
We will denote by $\mathcal B(K)$ the set of all bisections of $K$.
We can now define the \emph{infimum of the maximum bisecting diameter} of $K$ by
\[
\D_B(K):=\inf_{(K_1, K_2)\in\mathcal{B}(K)}\max\{\D(K_1),\D(K_2)\}.
\]
A remarkable difference with respect to the planar case is that it is not clear now whether this infimum is
a minimum. The reason is that for an arbitrary bisection $(K_1,K_2)$ of $K$ in $\mathbb R^n$, for $n\geq 3$,
the set $K_1\cap K_2\cap \mathrm{bd}(K)$ is, in general, $n$-dimensional,
and so it will not induce a bisection by a hyperplane
(cf. Lemma~\ref{lem:StraightLine}).
This suggests that an appropriate approach could be focusing on bisections by hyperplanes,
which will imply that the infimum is actually a minimum, by using Blaschke Selection Theorem.

We now sketch that the corresponding isodiametric quotient is upper bounded.
For a given convex body $K$ in $\mathbb R^n$,
let $x,y\in K$ be points such that $d(x,y)=\D(K)$, and
let $(K_1,K_2)\in\mathcal{B}(K)$.
Then at least two points from $\{x,(x+y)/2,y\}$ belong to one of the sets $K_1$ or $K_2$.
Since the distance between any pair of those three points is at least $\D(K)/2$,
then we can conclude that $\max\{\D(K_1),\D(K_2)\}\geq\D(K)/2$,
and so $\D_B(K)\geq \D(K)/2$,
which together with the classical isodiametric inequality in $\mathbb R^n$
(see \eqref{eq:bieberbach} for the planar case) gives
\[
\frac{\mathrm{V}(K)}{\D_B(K)^n}\leq 2^n\frac{\mathrm{V}(K)}{\D(K)^n}\leq 2^n\frac{\mathrm{V}(\mathbb B^n_2)}{\D(\mathbb B^n_2)^n},
\]
thus showing that this quotient is upper bounded by an absolute positive constant.
Hence the supremum of $V(K)/\D_B(K)^n$ over the convex bodies in $\mathbb R^n$ is finite
and it would be interesting to characterize the sets attaining such value,
as done in Theorem~\ref{th:isodiametricIneq}.

Analogously, we can define the \emph{infimum of the maximum bisecting width} of $K\subset\mathbb R^n$ by
\[
\w_B(K):=\inf_{(K_1, K_2)\in\mathcal{B}(K)}\max\{\w(K_1),\w(K_2)\}.
\]
Using analogous ideas to the ones exhibited in Lemma \ref{lem:WidthBisect=Half},
we can see that $\w(K)=2\,\w_B(K)$. Notice that this implies that the previous infimum is in fact a minimum:
if $w(K)$ is attained between two parallel supporting hyperplanes $H_1$, $H_2$,
then $\w_B(K)$ will be attained by the bisection of $K$ given by the hyperplane $(H_1+H_2)/2$.
Moreover, taking into account that
\[
\frac{\mathrm{V}(K)}{\w(K)^n}\geq\frac{2}{\sqrt{3}\,n!}  
\]
(see~\cite[Th.~6.2]{Be}), we can conclude that
\[
\frac{\mathrm{V}(K)}{\w_B(K)^n}=2^n\frac{\mathrm{V}(K)}{\w(K)^n}\geq\frac{2^{n+1}}{\sqrt{3}\,n!},
\]
thus showing that this quotient is lower bounded by an absolute constant.

\begin{remark}
Some interesting results regarding the isodiametric quotient
in compact convex surfaces of $\mathbb R^3$ can be found in~\cite{surfaces1,surfaces2}.
\end{remark}

\subsection{Reverse Isodiametric and Isominwidth bisecting inequalities in $\mathbb R^n$}
\label{subsec:rn2}

Using the same ideas commented in the Introduction for the planar case,
and the same definitions from Subsection~\ref{subsec:rn},
we can see that the quotient $\mathrm{V}(K)/\D_B(K)^n$ cannot be lower bounded
by any positive constant, when considering arbitrary convex bodies $K\subset\mathbb R^n$.
However, we can develop the same approach from Section~\ref{sec:RevIsodiamIneq}:
we can say that a convex body $K\subset\mathbb R^n$ is in \emph{Behrend-bisecting position} if
\[
\frac{\mathrm{V}(K)}{\D_B(K)^n}=\sup_{\phi\in End(\mathbb{R}^n)}\frac{\mathrm{V}(\phi(K))}{\D_B(\phi(K))^n}.
\]
The ideas from \cite[Lemma 3.2]{GMS} allow to obtain
a result analogous to Lemma~\ref{lem:behrendBisectPos},
which will lead to the following consequence, see~\cite[Th.~1.4]{GMS}:
if $K$ is a convex body in $\mathbb R^n$ in Behrend-bisecting position, then
\[
\frac{\mathrm{V}(K)}{\D_B(K)^n}\geq\frac{1}{\sqrt{n!}\,n^\frac{n}{2}}.
\]
As we noted in~\eqref{eq:RevIsodBisecting}, this inequality is not sharp.

%

Analogously, the quotient $\mathrm{V}(K)/\w_B(K)^n$ cannot be upper bounded by any positive constant
(just consider a \emph{very flat} convex body $K$ in $\mathbb R^n$).
However, if we assume that $K$ is in \emph{isominwidth-bisecting position}, i.e., if
\[
\frac{\mathrm{V}(K)}{\w_B(K)^n}=\inf_{\phi\in End(\mathbb{R}^n)}\frac{\mathrm{V}(\phi(K))}{\w_B(\phi(K))^n},
\]
then one could prove that this quotient is upper bounded. More precisely,
\[
\frac{\mathrm{V}(K)}{\w_B(K)^n}\leq 2^n,
\]
with equality if and only if $K$ is a cube
(this follows from~\cite[Th.~1.6]{GMS}, which is an extension of~\eqref{eq:GMSchymuraIneq} to higher dimensions,
together with $\w_B(K)=\w(K)/2$, as in Lemma~\ref{lem:WidthBisect=Half},
and the fact that $K$ is in isominwidth position if and only if $K$ is in isominwidth-bisecting position,
as in Corollary~\ref{cor:isominwidthEquiv}).

\subsection{Isodiametric inequality in the Spherical and Hyperbolic space}
\label{subsec:hn}
The study of general geometric inequalities can be also done in other geometries, different from the Euclidean one.
In this direction, some interesting results have been obtained in
the spherical space $\mathbb S^n$ and the hyperbolic space $\mathbb H^n$ of dimension $n$~\cite{dekster, klain, HCMF}.
In this general context, a set $K$ is called \emph{convex} if for any $x,y\in K$, \emph{the shortest geodesic segment} joining
$x$ and $y$ is contained in $K$ (in the spherical case, it is additionally required that $K$ is contained in a halfsphere).
Moreover, one can naturally define the notions of \emph{spherical diameter}, \emph{spherical width}
and the \emph{spherical area}, as well as the corresponding \emph{hyperbolic} analogues.
In this setting, the isodiametric and isominwidth inequalities have been recently proven in the 2-dimensional spherical and hyperbolic cases,
when $K$ is \emph{centrally-symmetric}~\cite[Th.~1.1 and~1.3, Th.~5.1 and~5.3]{HCMF}.
Some other related considerations in the hyperbolic case can be found in \cite{GS}.

We would like to note that, in $\mathbb S^n$ and $\mathbb H^n$,
we cannot assure that the diameter of a given set $A$ \emph{is attained}
by a pair of its extreme points (they can be defined as the points of $A$ which do not belong to the relative interior
of any geodesic segment contained in $A$), as it occurs in $\mathbb R^n$. For instance, consider
\[
A_{\delta}=\mathrm{conv}_{\mathbb S^2}\left(\{(-1,0,0)\}\cup\left\{\sin\theta(0,1,0)+\cos\theta(\delta,0,\sqrt{1-\delta^2}):\theta\in[-\pi/4,\pi/4]\right\}\right)\subset\mathbb S^2,
\]
for some $\delta\in[1/4,3/4]$
(notice that $A_\delta$ is just a geodesic triangle in $\mathbb S^2$).
Then, the diameter of $A_{\delta}$ in $\mathbb S^2$ is only given by the distance between
the points $(-1,0,0)$, $(\delta,0,\sqrt{1-\delta^2})\in A_{\delta}$, but it is clear that
$(\delta,0,\sqrt{1-\delta^2})$ is not an extreme point of $A_{\delta}$.

Of course, for a given set $A$ contained in $\mathbb S^n$ or in $\mathbb H^n$
we can consider the problems studied in Sections~\ref{sec:IsodiamIneq} and~\ref{sec:IsominwidthIneq}.
From the previous example $A_\delta\subset\mathbb S^2$,
it is not clear that an analogous result to Lemma~\ref{lem:StraightLine} can be obtained in this setting.
Notice that if we substitute a general bisection of a given set $A\subset\mathbb S^2$
by the bisection determined by the maximum arc with the same endpoints,
the corresponding diameters of the new subsets can be greater than the former ones,
since they are not necessarily attained by extreme points of the subsets.
Furthermore, and up to our knowledge, there is no isodiametric inequality in the literature
for \emph{general} convex bodies in $\mathbb S^n$ or $\mathbb H^n$,
which suggests that these problems will require a more detailed study.


\end{document}